              \def\version{6 April 2020}	        	%
\numberwithin{equation}{section}
\def\emptyset{\varnothing} 
\def\d{{\rm d}} 
\def\e{\varepsilon} 
\font\tenBbb=msbm10 
\font\sevenBbb=msbm7 
\font\fiveBbb=msbm5 
\def\2{\mathbf 2}
\newcommand{\R}     {\mathbb{R}} 
\newcommand{\N}     {\mathbb{N}} 
\renewcommand{\P}   {\mathbb{P}} 
\newcommand{\E}     {\mathbb{E}}
\newcommand{\smfrac}[2]{\textstyle{\frac {#1}{#2}}}
\def\1{{\mathchoice {1\mskip-4mu\mathrm l}      
{1\mskip-4mu\mathrm l} 
{1\mskip-4.5mu\mathrm l} {1\mskip-5mu\mathrm l}}} 
\def\comment#1{} 
\newtheoremstyle{thm}{2ex}{2ex}{\itshape\rmfamily}{} 
{\bfseries\rmfamily}{}{1.7ex}{} 
\newtheoremstyle{rem}{1.3ex}{1.3ex}{\rmfamily}{} 
{\itshape\rmfamily}{}{1.5ex}{}
\renewcommand{\theequation}{\thesection.\arabic{equation}} 
\newtheorem{theorem}{Theorem}[section] 
\newtheorem{lemma}[theorem]{Lemma} 
\newtheorem{prop}[theorem] {Proposition}
\theoremstyle{definition}
\renewcommand{\d}{{\rm d}} 
\newcommand{\eps}{\varepsilon}
\newcommand{\Tr}{{\operatorname {Tr}\,}}
\newcommand{\Bcal}  {{\mathcal B}}
\newcommand{\Dcal}   {{\mathcal D }}
\newcommand{\Lcal}   {{\mathcal L }} 
\newcommand{\Ncal}   {{\mathcal N }}
\newcommand\numberthis{\addtocounter{equation}{1}\tag{\theequation}}
\renewcommand{\e}   {{\operatorname e }}
\definecolor{Red}{rgb}{1,0,0}
\begin{document} 
 
\title[Invasion and fixation of microbial dormancy traits under competitive pressure]{Invasion and fixation of microbial dormancy traits \\ under competitive pressure}
\author[Jochen Blath and András Tóbiás]{}
\maketitle
\thispagestyle{empty}
\vspace{-0.5cm}

\centerline{\sc Jochen Blath and András Tóbiás{\footnote{TU Berlin, Straße des 17. Juni 136, 10623 Berlin, {\tt blath@math.tu-berlin.de, tobias@math.tu-berlin.de}}}}
\renewcommand{\thefootnote}{}
\vspace{0.5cm}
\centerline{\textit{TU Berlin}}

\bigskip

\centerline{\small(\version)} 
\vspace{.5cm} 
 
\begin{quote} 
{\small {\bf Abstract:}} Microbial dormancy is an evolutionary trait that has emerged independently at various positions across the tree of life. It describes the ability of a microorganism to switch to a metabolically inactive state that can withstand unfavorable conditions. However, maintaining such a trait requires additional resources that could otherwise be used to increase e.g.\ reproductive rates. 
In this paper, we aim for gaining a basic understanding under which conditions maintaining a seed bank of dormant individuals provides a ``fitness advantage'' when facing resource limitations and competition for resources among individuals (in an otherwise stable environment). In particular, we wish to understand when an individual with a  ``dormancy trait'' can invade a resident population lacking this trait despite
having a lower reproduction rate than the residents.
To this end, we follow a stochastic individual-based approach employing birth-and-death processes, where dormancy is triggered by competitive pressure for resources. In the large-population limit, we identify a necessary and sufficient condition under which a complete invasion of mutants has a positive probability. Further, we explicitly determine the limiting probability of invasion and the asymptotic time to fixation of mutants in the case of a successful invasion. In the proofs, we observe the three classical phases of invasion dynamics in the guise of Coron et\ al.\ (2017, 2019).
\end{quote}


\bigskip\noindent 
{\it MSC 2010.} 60J85, 92D25.

\medskip\noindent
{\it Keywords and phrases.} Dormancy, seed bank, competition-induced switching, stochastic population model, large population limit, multitype branching process, Lotka-Volterra type system, invasion fitness, individual-based model.

\setcounter{tocdepth}{3}


\setcounter{section}{0}
\begin{comment}{
This is not visible.}
\end{comment}
\section{Introduction}
\label{sec-introduction} 

Dormancy is an evolutionary trait that has emerged independently at various positions across the tree of life. In the present article, we are in particular interested in microbial dormancy (cf.\ \cite{LJ11} and \cite{SL18} for recent overviews of this subject). Microbial dormancy describes the ability of a microorganism to switch to a metabolically inactive state in order to withstand unfavourable conditions (such as resource scarcity and competitive pressure or extreme environmental fluctuations), and this seems to be a highly effective (yet costly) evolutionary strategy. In certain cases, for example in marine sediments, simulation studies indicate that under oligotrophic conditions, the fitness of an organisms is determined to a large degree by its ability to simply stay alive, rather than to grow and reproduce (cf.\ \cite{BAL18}). Indeed, maintaining a dormancy trait requires additional resources in comparison to individuals lacking this trait, resulting in significant trade-offs such as e.g.\ a lower reproduction rate. 

In this paper, we aim at gaining a basic rigorous understanding for the conditions under which maintaining a dormancy trait can be beneficial. We investigate the particular question whether an individual with a dormancy trait can invade a resident population lacking this trait, even if maintaining dormancy reduces its reproduction rate compared to the rate of the residents, under otherwise stable environmental conditions.  
To this end, we follow a stochastic individual-based approach employing birth-and-death processes (a classic set-up underlying much of adaptive dynamics, as outlined e.g.\ in \cite{B19}), where dormancy is triggered in response to competitive pressure for limited resources. In the large-population limit, we identify a necessary and sufficient condition under which the invasion of mutants, despite having a lower reproduction rate than the resident population, has a positive probability. Further, we explicitly determine the limiting probability of invasion and the asymptotic time of fixation of mutants in the case of a successful invasion.

To be more explicit, in our model the total population evolves according to a continuous time Markov chain. Initially, there is a fit resident population, which we assume to be close to its equilibrium population size, featuring (random) reproduction, natural death (``death by age''), and death by competition. This results in a stochastically evolving population with logistically regulated drift fluctuating around a constant carrying capacity (reflecting a stable yet limited supply of resources). We assume that environmental conditions are also stable and do not affect reproduction, death or competition rates. In this situation, we then assume that a single ``mutant'' (or ``migrant'') with ``dormancy trait'' appears in the population, who on the one hand is still fit enough to survive in absence of the residents (however with a strictly lower reproduction rate), but on the other hand is able to switch to a dormant state at a rate proportional to the ``competitive pressure'' exerted on her due to crowding and limited resource availability. That is, for some $0<p<1$, ``competition events'' that would normally cause death for an ordinary resident individual kill a mutant individual only with probability $1-p$. Otherwise, with probability $p$, the mutant individual affected by competition will persist and switch to the dormant state. Finally, dormant mutant individuals neither reproduce nor are affected by competitive pressure for resources while they are still to some degree exposed to natural death (at a rate typically smaller than for active individuals). We assume that at a constant ``resuscitation rate''
, they switch back to the active state. 

Our main results show that the mutants will invade the resident population with positive probability under a suitable condition on the parameters of the model. This condition has the following interpretation: the advantage of the resident population caused by its higher reproduction rate needs to be over-compensated by the advantage of the mutant population resulting from being able to escape competitive deaths due to overcrowding by switching into dormancy. This condition can be made entirely transparent in terms of the parameters of the model, see \eqref{invasionpossible} resp.\ Section~\ref{sec-discussion} below. Under this condition, we characterize the probability of invasion (that is, the mutants completely replace the residents and reach their own equilibrium carrying capacity), and we identify the expected time of invasion on a logarithmic scale in the large-population limit. With high probability, a successful mutation follows the three classical phases exhibited in basic adaptive dynamics models (which were introduced in \cite[Section 3]{C06}; see e.g.\ \cite[Section 4.1]{B19} for a slightly more general picture, but in particular \cite{C+16, C+19} for work in a closely related context that inspired our analysis and provides many of the necessary tools): (1) mutant growth until reaching a population size comparable to the carrying capacity, while during the same time period the resident population stays close to its equilibrium size, (2) a phase where all sub-populations are large and the dynamics of the frequency process can be approximated by a deterministic dynamical system, (3) extinction of the resident population, while the mutant population remains close to its equilibrium size.

Note that for our results it is essential that switching into dormancy is induced by competitive pressure.  Indeed, if instead this switching happens at a constant rate (``spontaneous'' or ``stochastic switching'', cf.\ e.g.\ \cite{LJ11}), the mutants will {\em never} be able to invade the resident population unless their birth rate is higher than that of the residents (in which case their invasion would also be possible without a dormancy trait, and the assumption that dormancy is a costly trait would be violated). Further, mutants cannot make the residents go extinct unless they are fit enough to survive on their own; thus, evolutionary suicide, as observed e.g.\ in \cite{BCFMT16}, does not occur in our model. Long-term coexistence of residents and mutants is also excluded in our modelling set-up.


Let us note that while dormancy was recently investigated in several mathematical works in the area of population genetics and coalescent theory (see e.g.~\cite{KKL01, BGKS13, BGKW16, KATZ17, BGKW18}), in the field of adaptive dynamics we are not aware of prior work involving dormancy. The present paper takes a first step in this direction, analysing the invasion dynamics in a simple toy model. In order to make this model more realistic, one could e.g.~incorporate further mutations in the spirit of adaptive dynamics. In the regime of very rare mutations introduced by Champagnat (cf.~\cite{C06, CM11, BBC17}), we expect that the model behaves similarly to the case of no further mutation. Recently, in \cite{CMT19}, a regime of still rare but more frequent mutations was considered, with the additional effect of horizontal gene transfer. Here, mutation rates are large enough so that  small sub-populations can have macroscopic effects on the whole population. It should be interesting to study the additional effects of dormancy traits in this regime. As a further step, one could also introduce spatiality in the model, which is relevant in modelling the trait space (see e.g.~\cite{BB18}) or the environment of the populations (see e.g.~\cite{FM04,CM07}). Finally, the resuscitation rate, which is assumed constant in the present paper, could also be made dependent on the strength of competition.

Note that related scenarios involving ``phenotypic switches'', arising e.g.\ in cancer modelling, have been analysed recently by \cite{BB18,HBT13}. For dormancy and switching models in {\em fluctating environments}, dynamical systems and branching process models have been investigated in \cite{MS08,DMB11}. Here, as in the {\em competition} setup of the present paper, the basis of a rigorous understanding for the evolutionary advantages of seed banks seems to be emerging.
It seems fair to say that dormancy in its many forms, and its interplay with other evolutionary and ecological forces, will provide many interesting future research challenges in mathematical biology.

The remainder of this paper is organized as follows. In Section~\ref{sec-deathorseedbank} we introduce our model and state our main results. Next, in Section~\ref{sec-discussion} we discuss some strongly related questions. Finally, in Section~\ref{sec-proof} we prove the main results. Each of these sections starts with a description about its internal organization.

\section{Model definition and main results}\label{sec-deathorseedbank}
The structure of this section is the following. In Section~\ref{sec-modeldef} we define our stochastic population model. Next, the goal of Section~\ref{sec-invasionconditions} is to introduce necessary and sufficient conditions for mutant invasion with positive probability, to present the formulas for the probability and time of invasion in the large-population limit, and to provide a heuristic justification for these. In particular, we comment on the probability and time of the invasion. The introduced quantities and conditions are then used in Section~\ref{sec-statementofresult} in order to state our main results, Theorems~\ref{thm-invasion}, \ref{thm-success}, and \ref{thm-failure}, the proof of which will make our heuristic arguments rigorous.

\subsection{The model}\label{sec-modeldef}

We have two traits, the resident one (1) and the mutant one (2). Mutant individuals can have an active (2a) and a dormant (2d) state. As an interpretation, we will sometimes say that the dormant individuals are in the \emph{seed bank}. Informally speaking, the model is defined as follows.
\begin{itemize}
    \item A resident individual gives birth to another such individual at rate $\lambda_1>0$.
    \item An active mutant individual gives birth to another such individual at rate $\lambda_2 \in (0,\lambda_1)$.
    \item Any active individual has a natural death rate $\mu \in (0,\lambda_2)$.
    \item $K>0$ is the carrying capacity of the population.
    \item The competitive pressure felt by an active individual from another active individual is $\alpha/K>0$, where $\alpha>0$. For any ordered pair $(x_i,x_j)$ of active individuals, at rate $\alpha/K>0$ a competitive event affecting $x_i$ happens. We fix $p \in (0,1)$. At a competitive event, in case $x_i$ is a resident individual, it dies. If $x_i$ is a mutant individual, it dies with probability $1-p$ and becomes a dormant (mutant) individual with probability $p$. \\
    In other words, writing $\N_0=\{ 0,1,2,\ldots \}$ and $\N=\{ 1,2,\ldots\}$, in a population with $n_{1} \in \N_0$ (active) resident individuals and $n_{2a} \in \N_0$ active mutant individuals, writing $n_a=n_1+n_{2a}$ for the total number of active individuals, a resident individual dies by competition at rate $\alpha n_a/K$, an active mutant dies by competition at rate $(1-p)\alpha n_a/K$ and switches to dormant mutant at rate $p\alpha n_a/K$. 
    \item For some $\kappa \geq 0$, a dormant (mutant) individual dies at rate $\kappa \mu$.
    \item A dormant (mutant) individual becomes an active (mutant) individual at rate $\sigma>0$. 
\end{itemize}
Further necessary conditions on the parameters will be specified later in the sequel.

To be more precise, we consider, for $t \geq 0$, a finite number $N_t \in \N_0$ of individuals $\{ x_i \colon i \in [N_t]\}$, where for all $i \in [N_t]$ we have $x_i \in \{ 1, 2a, 2d \}$. Here we wrote $[n]=\{ 1,2,\ldots,n\}$ for $n \in \N_0$, in particular, $[0]=\emptyset$. We define the triple of rescaled frequency processes
\[ (\mathbf N_t^K)_{t \geq 0} = ((N_{1,t}^K,N_{2a,t}^K,N_{2d,t}^K))_{t \geq 0}, \]
where for $x \in \{ 1, 2a, 2d \}$,
\[ N_{x,t}^K = \frac{1}{K} \# \{ x_i \colon i \in [N_t], x_i= x\} \]
is the number of individuals of type $x$ rescaled by $K$. We also write
\[ N_{2,t}^K = N_{2a,t}^K + N_{2d,t}^K \]
for $1/K$ times the total population size of mutant individuals and
\[ N_t^K = N_{1,t}^K + N_{2,t}^K = \frac{N_t}{K} \]
for $1/K$ times the total population size.
Hence, $\mathbf N_t^K$ is a $\big(\frac{1}{K} \N\big)^3$-valued Markov process with transitions
\begin{align*}
    (n_1,n_{2a},n_{2d}) \to
    \begin{cases}
    & (n_1+\smfrac{1}{K},n_{2a},n_{2d}) \text{ at rate } K n_1\lambda_1, \\
    & (n_1,n_{2a}+\smfrac{1}{K},n_{2d}) \text{ at rate } K n_{2a}\lambda_2, \\
    & (n_1-\smfrac{1}{K},n_{2a},n_{2d}) \text{ at rate } K n_1(\mu + \alpha (n_1+n_{2a})), \\
    & (n_1,n_{2a}-\smfrac{1}{K},n_{2d}) \text{ at rate } K n_{2a}(\mu + (1-p)\alpha (n_1+n_{2a})), \\
    & (n_1,n_{2a}-\smfrac{1}{K},n_{2d}+\smfrac{1}{K}) \text{ at rate } K n_{2a} p\alpha (n_1+n_{2a}), \\
    & (n_1,n_{2a},n_{2d}-\smfrac{1}{K}) \text{ at rate } K n_{2d}\kappa\mu, \\
    & (n_1,n_{2a}+\smfrac{1}{K},n_{2d}-\smfrac{1}{K}) \text{ at rate } K n_{2d}\sigma.
    \end{cases}
\end{align*}
\subsection{Assumptions and heuristics}\label{sec-invasionconditions}
The Markov process $(\mathbf N_t^K)_{t \geq 0}$ is well-defined for any $K>0$, given the initial condition. Relevant initial conditions satisfy $\bar N_0^K \approx (\bar n_1, \frac{1}{K}, 0)$ where $\bar n_1$ is the equilibrium population size of the resident population in absence of the mutant population. That is, at time $0$, resident individuals are close to equilibrium, and there is precisely one active mutant and there are no dormant mutants.

Now, we want to find necessary and sufficient conditions under which the probability of mutant invasion is nonvanishing in the large-population limit. Further, conditional on a successful invasion, we want to identify the time of invasion for large $K$ on the logarithmic scale. To this aim, we have to choose the parameters in such a way that, roughly speaking, the following assertions hold.
\begin{enumerate}
    \item\label{residentslive} The resident population is able to survive on its own, i.e., $\bar n_1>0$.
    \item Mutants are also fit: their equilibrium population size $(\bar n_{2a},\bar n_{2d})$ is coordinatewise positive.
    \item\label{firstphase-short} \emph{Phase I of the invasion}: For large $K$, starting from $\bar N_0^K \approx (\bar n_1, \frac{1}{K}, 0)$, the probability that $N_{2,t}^K=0$ eventually is not close to one for large $K$.
    \item\label{secondphase-short} \emph{Phase II}: Given that the total mutant population has reached size $\eps K$, for $\eps>0$ small, with high probability $\bar N_t^K$ will get close to $(K \eps,K \bar n_{2a},K \bar n_{2d})$ for arbitrarily small $\eps>0$.
    \item\label{thirdphase-short} \emph{Phase III}: Given that the process reached the state $(K \eps,K \bar n_{2a},K \bar n_{2d})$, the resident population will die out with high probability.
\end{enumerate}
Let us now heuristically identify the conditions corresponding to \eqref{residentslive}--\eqref{thirdphase-short}. The conditions that are necessary and sufficient for \eqref{firstphase-short} will turn out also to be sufficient for \eqref{secondphase-short} and \eqref{thirdphase-short}. These heuristics will be made precise during the proof of the main results of the paper.

\begin{enumerate}
    \item In absence of mutants, for large $K$, the rescaled resident population $N_{1,t}^K$ can be approximated by $n_1(t)$, where $n_1(\cdot)$ solves the quadratic ODE
    \[ \dot{n}_1(t)=n_1(t)(\lambda_1-\mu- \alpha n_1(t)). \]
    If $\lambda_1>\mu$, this system has a unique positive equilibrium, given as
    \[ \bar n_1 = \frac{\lambda_1-\mu}{\alpha}, \]
    which is also asymptotically stable. Else, there is no stable positive equilibrium.
    \item\label{secondpoint} Similarly, in absence of residents, for large $K$, the rescaled mutant population $(N_{2a,t}^K,N_{2d,t}^K)$ can be approximated by 
    $(n_{2a}(t),n_{2d}(t))$, where  $(n_{2a}(\cdot),n_{2d}(\cdot))$ solves the two-dimensional system of ODEs
    \[
    \begin{aligned}
    \dot n_{2a}(t) &  = n_{2a}(t)(\lambda_2-\mu-\alpha n_{2a}(t))+\sigma n_{2d}(t),\\
    \dot n_{2d}(t) & = p\alpha n_{2a}(t)^2 - (\kappa \mu+\sigma) n_{2d}(t).
    \end{aligned}
    \numberthis\label{linearized}
    \]
    Linearizing this system, we obtain the Jacobian matrix
    \[ A(n_{2a},n_{2d})= \left(  \begin{smallmatrix}\lambda_2-\mu-2\alpha n_{2a} & \sigma \\ 2p\alpha n_{2a} & -\kappa \mu-\sigma\end{smallmatrix} \right). \numberthis\label{Jacobian} \]
    Clearly, there is no equilibrium of the form $(0,\cdot)$ or $(\cdot,0)$ apart from $(0,0)$. Further, we have
    \[ A(0,0) = \left(  \begin{smallmatrix}\lambda_2-\mu & \sigma \\ 0 & -\kappa \mu-\sigma\end{smallmatrix} \right). \]
    For $\lambda_2>\mu$, it is easy show that $A(0,0)$ has one negative and one positive eigenvalue and hence $(0,0)$ is unstable. Let us now show that for $\lambda_2>\mu$ we have a unique (coordinatewise) positive equilibrium, which is asymptotically stable.
    For an equilibrium $(n_{2a},n_{2d})$ with $n_{2a} \neq 0$, dividing both equations in \eqref{linearized} by $n_{2a}$, we obtain 
    \[ \frac{n_{2d}}{n_{2a}}=-\frac{\lambda_2-\mu-\alpha n_{2a}}{\sigma}=\frac{p\alpha n_{2a}}{\kappa \mu+\sigma}.  \numberthis\label{positiveequilibrium} \]
    From \eqref{positiveequilibrium} we obtain that there is precisely one such equilibrium, with coordinates
    \[ \bar n_{2a} = \frac{(\lambda_2-\mu)(\kappa\mu+\sigma)}{\alpha(\kappa\mu+(1-p)\sigma)}>0, \qquad \bar n_{2d} = \frac{(\lambda_2-\mu)^2 p (\kappa\mu+\sigma)}{\alpha (\kappa\mu+(1-p)\sigma)^2}>0.\]
    were we used that $\lambda_2>\mu$, $\kappa\mu \geq 0$, $\sigma>0$ and $p \in (0,1)$.
    Comparing this to \eqref{Jacobian}, we obtain
    \[ \det A(\bar n_{2a}, \bar n_{2d}) = (\kappa \mu+\sigma)(\lambda_2-\mu). \]
    If $\lambda_2>\mu$, then the right-hand side is positive. In this case there are two strictly negative eigenvalues. This is true because the trace $\Tr A(\bar n_{2a}, \bar n_{2d})$ is negative, which follows from the fact that $\bar n_{2a}>\lambda_2-\mu$ and $\kappa\mu+\sigma>0$. Hence, $(\bar n_{2a},\bar n_{2d})$ is asymptotically stable.
    
    \item\label{thirdpoint} As long as the mutant population size $K N_{2,t}^K$ is negligible compared to $K$, the resident population can be approximated by its equilibrium population size, and the competition pressure felt by a mutant individual comes essentially  only from the resident population. This implies that the dynamics of the mutant population size process $(KN_{2a,t}^K,KN_{2d,t}^K)$ can be approximated by a bi-type linear branching process $(\widehat Z_{2a}(t),\widehat Z_{2d}(t))$ with rates
    \[ (n_{2a},n_{2d}) \to
    \begin{cases}
    & (n_{2a}+1,n_{2d}) \text{ at rate } n_{2a} \lambda_2, \\
    & (n_{2a}-1,n_{2d}) \text{ at rate } n_{2a} (\mu+\alpha \bar n_1(1-p)), \\
    & (n_{2a}-1,n_{2d}+1) \text{ at rate } n_{2a} \bar n_1\alpha p,\\
    & (n_{2a}+1,n_{2d}-1) \text{ at rate } \sigma n_{2d}, \\
    & (n_{2a},n_{2d}-1) \text{ at rate } \kappa \mu n_{2d}. 
    \end{cases}
    \] By classical results on multitype branching processes  \cite[Section 7.2]{AN72}, the process is supercritical, i.e., there is no almost sure convergence to $(0,0)$, if and only if the following \emph{mean matrix} has a positive eigenvalue
    \[ J= \begin{pmatrix}
    \lambda_2-\mu-\alpha \bar n_1 &  p\alpha \bar n_1 \\
    \sigma & -\kappa \mu-\sigma \\
    \end{pmatrix}
    = 
    \begin{pmatrix}
    \lambda_2-\lambda_1 & p(\lambda_1-\mu) \\
   \sigma   & -\kappa \mu-\sigma \\
    \end{pmatrix}. \numberthis\label{Jdef}
    \]
    In the interesting case $\lambda_2<\lambda_1$, it is impossible that we have two positive eigenvalues because $\Tr J<0$ follows from the definition of $\bar n_1$. To describe the condition that $J$ has a positive eigenvalue more explicitly, let us first consider the sign of $\det J$. In case there is precisely one positive eigenvalue, the determinant must be negative, which is equivalent to
    \[ \lambda_1-\lambda_2 < p(\lambda_1-\mu)\frac{\sigma}{\kappa\mu+\sigma}= p \alpha \bar n_1 \frac{\sigma}{\kappa\mu+\sigma}. \numberthis\label{invasionpossible}\]
    Indeed, since $\kappa\geq 0$, the eigenvalue equation in the variable $\lambda$ corresponding to the matrix $J$ in \eqref{invasionpossible} is
    \[ \lambda^2 + (\lambda_1-\lambda_2+\kappa\mu+\sigma) \lambda + \det J = 0. \]
    This quadratic equation always has two different real solutions if $\det J$ is negative, and hence one of the eigenvalues of $J$ must indeed be positive if \eqref{invasionpossible} holds. 
    The condition~\eqref{invasionpossible} turns out to be necessary and sufficient for the invasion probability to be asymptotically positive. We will interpret it and discuss the related notion of invasion fitness in Section~\ref{sec-invasionpossible}.
    \item Now we argue that under condition \eqref{invasionpossible}, given that the total mutant population has reached a population size of order $K$, the second phase of invasion also takes place, which ends with $\mathbf N_t^K \approx (0,\bar n_{2a},\bar n_{2d})$. In that phase, as long as all sub-populations are of order $K$, the process $\mathbf N_t^K$ can be approximated, for $K$ large, by the (deterministic) Lotka--Volterra type system 
    \[
    \begin{aligned}
    \dot n_{1}(t)&= n_1(t)(\lambda_1-\mu-\alpha (n_1(t)+n_{2a}(t)), \\
    \dot n_{2a}(t)& = n_{2a}(t)(\lambda_2-\mu-\alpha (n_1(t) + n_{2a}(t))+\sigma n_{2d}(t), \\
    \dot n_{2d}(t)&=p\alpha n_{2a}(t)(n_1(t)+n_{2a}(t))-(\kappa\mu+\sigma) n_{2d}(t).
    \end{aligned} \numberthis\label{3dimlotkavolterra}
    \]
    We will show below (see Proposition~\ref{prop-stableequilibrium3D}) that \eqref{invasionpossible} with $\lambda_1>\lambda_2>\mu$ is also sufficient to guarantee that this system has only one stable nonnegative equilibrium, which is 
        equal to $(0,\bar n_{2a},\bar n_{2d})$ and asymptotically stable. Moreover, there is a set of initial conditions that $\mathbf N_t^K$ reaches with high probability given that the mutants survived the first phase, such that starting from this set, the solution of \eqref{3dimlotkavolterra} tends to $(0,\bar n_{2a},\bar n_{2d})$ as $t \to \infty$.
    \item\label{lastpoint} After the second phase of invasion, the population rescaled by $1/K$ is close to the equilibrium $(0,\bar n_{2a},\bar n_{2d})$. To be more precise, the resident population size is of order $\eps K$ for some $\eps>0$ small.
     It remains to show that for large $K$, with probability tending to one, the resident population dies out within $O(\log K)$ time, while the mutant population stays close to equilibrium. Now, as long as $(KN_{2a,t}^K,KN_{2d,t}^K)$ is near $(K \bar n_{2a}, K \bar n_{2d})$ and the resident population is small compared to $K$, the competitive pressure that the resident individuals feel comes essentially only from the mutant population. This implies that $K N_{1,t}^K$ can be approximated by a branching process $\widehat Z_1(t)$ with rates
    \[ n \to \begin{cases} n+1 &\text{ at rate } n_1 \lambda_1, \\ n-1 & \text{ at rate } n_1(\mu+\alpha \bar n_{2a}) \end{cases}. \]
    In order to show that this branching process goes extinct almost surely, we have to verify that it is subcritical, i.e., the rate $n \to n+1$ is smaller than the rate $n \to n-1$. But this assertion is equivalent to the inequality \eqref{invasionpossible}. 
   \item Using our multitype branching process approach, now we can compute the extinction probabilities under condition~\eqref{invasionpossible} with $\lambda_1>\lambda_2>\mu$. Define
\[ q = \P\big( \exists t < \infty \colon \widehat Z_{2a}(t)+\widehat Z_{2d}(t)=0 \big|  (\widehat Z_{2a}(0),\widehat Z_{2d}(0))=(1,0) \big). \numberthis\label{qdef} \]
By \cite[Section 7]{AN72}, $q$ is the first coordinate of the unique solution of the system of equations
\[ \numberthis\label{extinctionequation}
\begin{aligned}
 \lambda_2 (s_a^2-s_a) + p(\lambda_1-\mu) (s_d-s_a) + (\mu+(1-p)(\lambda_1-\mu))(1-s_a) &=0, \\
\sigma (s_a-s_d) + \kappa \mu(1-s_d) &=0,  \\
\end{aligned}
\]
in $[0,1]^2 \setminus \{ (1,1) \}$,
while the second coordinate of the same solution is the extinction probability given that the branching process is started from $(0,1)$. We will discuss the dynamics of the process started with one dormant individual and interpret the relation between $s_a$ and $s_d$ in Section~\ref{sec-discussion-startingwithadormant}.

   \end{enumerate}
Summarizing, our heuristics indicates that under condition \eqref{invasionpossible}, for large $K$, given that the mutants survive the first phase of invasion, the second and the third phase of the invasion are also successful with high probability.
\subsection{Statement of results}\label{sec-statementofresult}
Recall that we have assumed $\lambda_1>\lambda_2>\mu>0$, and recall also the stable equilibrium $(\bar n_{2a}, \bar n_{2d})$, which is the unique solution of the system of equations \eqref{positiveequilibrium} under the assumption $\lambda_2>\mu$. For $\beta>0$ define
\[ S_\beta =  \{ 0 \} \times[ \bar n_{2a}-\beta, \bar n_{2a} + \beta ] \times [\bar n_{2d}-\beta, \bar n_{2d} + \beta], \numberthis\label{Sbetadef}\]
a stopping time at which $\mathbf N_t^K$ reaches this set:
\[ T_{S_\beta} : = \inf \{ t > 0 \colon \mathbf N_t^K \in S_\beta \}, \numberthis\label{TSbetadef} \]
and the first time when the rescaled mutant population size reaches a threshold $x \geq 0$ (from below or above):
\[ T_{x}^2 := \inf \{ t > 0 \colon KN_{2,t}^K = \lfloor x K \rfloor \}. \numberthis\label{Tlevel} \] 
We further note that the largest eigenvalue of the matrix $J$ defined in \eqref{Jdef} is given as follows.
\[ \widetilde \lambda = \frac{1}{2} \Big( (\lambda_2-\lambda_1-\kappa\mu-\sigma)+\sqrt{(\lambda_1-\lambda_2+\kappa\mu+\sigma)^2-4\big((\lambda_1-\lambda_2)(\kappa\mu+\sigma)-p(\lambda_1-\mu)\sigma\big)} \Big). \numberthis\label{lambdatildedef} \]
Our first main result characterizes the probability of mutant invasion in the large-population limit.
\begin{theorem}\label{thm-invasion}
Assume that \eqref{invasionpossible} holds. Assume further that
\[ N^K_1(0) \underset{K \to \infty}{\to} \bar n_1  \]
and 
\[ (N^K_{2a}(0),N^K_{2d}(0))=(\smfrac{1}{K},0). \]
Then for any $0<\beta<\min \{ \bar n_{2a}, \bar n_{2d} \}$, we have
\[ \lim_{K \to \infty} \mathbb P \Big( T_{S_\beta} < T_0^2 \Big) = 1- q. \]
\end{theorem}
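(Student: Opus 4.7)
I would follow the three-phase decomposition outlined in Section~\ref{sec-invasionconditions}, extracting the probability $1-q$ from the Phase~I branching process approximation and treating Phase~II as a deterministic follow-up that succeeds with probability tending to $1$. Fix a small auxiliary parameter $\varepsilon \in (0,\min\{\bar n_{2a},\bar n_{2d}\})$. Since $\{T_{S_\beta} < T_0^2\} \subset \{T_\varepsilon^2 < T_0^2\}$, it suffices to prove \emph{(i)} $\lim_{\varepsilon \downarrow 0}\lim_{K \to \infty} \P(T_\varepsilon^2 < T_0^2) = 1-q$, and \emph{(ii)} $\P(T_{S_\beta} < T_0^2 \mid T_\varepsilon^2 < T_0^2) \to 1$ as $K \to \infty$ for every sufficiently small $\varepsilon$.

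For (i), fix an auxiliary small $\eta>0$ and set
\[
\tau^K_{\eta,\varepsilon} := \inf\{t \ge 0 : |N_{1,t}^K - \bar n_1| > \eta\} \wedge T_\varepsilon^2.
\]
On $[0,\tau^K_{\eta,\varepsilon}]$ each per-capita rate driving $(KN_{2a,t}^K,KN_{2d,t}^K)$ is sandwiched between the corresponding rates of two bi-type continuous-time linear branching processes $\widehat Z^{\pm}$ obtained by replacing $\bar n_1$ in point~\ref{thirdpoint} of Section~\ref{sec-invasionconditions} by $\bar n_1 \pm (\eta+\varepsilon)$. A standard thinning construction couples them so that $\widehat Z^-(t) \preceq (KN_{2a,t}^K,KN_{2d,t}^K) \preceq \widehat Z^+(t)$ for $t \le \tau^K_{\eta,\varepsilon}$. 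Since the Perron eigenvalue of $J$ is continuous in its entries, both $\widehat Z^{\pm}$ remain supercritical for $\eta,\varepsilon$ small, and the associated extinction probabilities $q^{\pm}$ extracted from \eqref{extinctionequation} converge to $q$ as $\eta,\varepsilon \downarrow 0$.

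The coupling only pays off if $\tau^K_{\eta,\varepsilon}$ exceeds the time by which $\widehat Z^{\pm}$ have decided between extinction and reaching level $\varepsilon K$. By classical multitype branching theory (\cite[Chap.~V]{AN72}), on the survival event $|\widehat Z^{\pm}(t)| \asymp e^{\widetilde\lambda^{\pm} t}$, so level $\varepsilon K$ is crossed by time $(1+o(1))(\log(\varepsilon K))/\widetilde\lambda^{\pm}$, while on the extinction event the extinction time has exponential tails; hence a deterministic $T_K := C\log K$ with $C$ large suffices, with probability $1-o(1)$, to push $\widehat Z^{\pm}$ either into $\{0\}$ or past $\varepsilon K$. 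Meanwhile, while mutants stay below $\varepsilon K$, the drift of $N_1^K$ is an $O(\varepsilon)$-perturbation of the exponentially stable logistic drift $n \mapsto n(\lambda_1-\mu-\alpha n)$; deviation estimates for density-dependent chains in the spirit of \cite{C+16,C+19} then give $\P(\tau^K_{\eta,\varepsilon} < T_K \wedge T_\varepsilon^2) \to 0$. Sandwiching $\P(T_\varepsilon^2 < T_0^2)$ between the survival probabilities of $\widehat Z^{\pm}$ up to $o(1)$ terms, and letting $K \to \infty$ then $\eta \downarrow 0$, yields (i).

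For (ii), with high probability $\mathbf N_{T_\varepsilon^2}^K$ lies in a compact subset of $(0,\infty)^3$ bounded away from the coordinate planes---the branching process structure forces $N_{2d,T_\varepsilon^2}^K$ to be comparable to $N_{2a,T_\varepsilon^2}^K$ once their sum hits $\varepsilon$. Ethier--Kurtz' theorem (applied as in \cite{C+16,C+19}) shows that on any fixed post-$T_\varepsilon^2$ time interval $\mathbf N^K$ converges uniformly in probability to the solution of the Lotka--Volterra system~\eqref{3dimlotkavolterra}; by the forthcoming Proposition~\ref{prop-stableequilibrium3D}, this solution enters the interior of $S_\beta$ by some deterministic finite time $T$, proving (ii). \emph{The main obstacle} is the simultaneous control of two time scales in (i): the branching process coupling is valid only on $[0,\tau^K_{\eta,\varepsilon}]$, yet $\widehat Z^{\pm}$ need a window of order $\log K$ to reveal their asymptotic fate, so the stability of $N_1^K$ near $\bar n_1$ must be quantified on this logarithmic scale---precisely the point where the techniques of Coron et al.~\cite{C+16,C+19} come in.
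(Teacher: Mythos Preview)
Your Phase~I outline is broadly in line with the paper's Proposition~\ref{prop-firstphase} and Lemma~\ref{lemma-residentsstay}, though note that the coordinatewise sandwich $\widehat Z^- \preceq (KN_{2a}^K,KN_{2d}^K) \preceq \widehat Z^+$ is \emph{not} obtained by merely replacing $\bar n_1$ with $\bar n_1 \pm(\eta+\varepsilon)$: because switching to dormancy is neither as bad as dying nor as good as surviving, the paper has to perturb the death-by-competition rate and the switching rate in opposite directions (see the rates of $(Z^{\eps,\pm}_{2a},Z^{\eps,\pm}_{2d})$ in the proof of Proposition~\ref{prop-firstphase}). A ``standard thinning construction'' does not yield this without further thought.

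The genuine gap, however, is in your step~(ii). The set $S_\beta$ is $\{0\}\times[\bar n_{2a}-\beta,\bar n_{2a}+\beta]\times[\bar n_{2d}-\beta,\bar n_{2d}+\beta]$, so reaching it requires $N_{1,t}^K=0$ exactly. The ODE~\eqref{3dimlotkavolterra} started with $n_1(0)>0$ satisfies $n_1(t)>0$ for all $t$ and therefore \emph{never} enters $S_\beta$; hence Ethier--Kurtz on a bounded time window cannot give $T_{S_\beta}<\infty$. The paper resolves this with a separate Phase~III (Proposition~\ref{prop-thirdphase}): after the Lotka--Volterra phase brings $\mathbf N^K$ into a small box $\Bcal_\beta^2$ around $(0,\bar n_{2a},\bar n_{2d})$, the resident population is coupled with a \emph{subcritical} branching process (rate $\lambda_1$ versus $\mu+\alpha\bar n_{2a}$), and it is this stochastic extinction step, taking order $\log K$ time, that actually produces $N_1^K=0$. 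Your two-phase split collapses Phases~II and~III and thereby loses the mechanism by which $T_{S_\beta}$ becomes finite.

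Two further points you pass over too quickly: Proposition~\ref{prop-stableequilibrium3D} gives only \emph{local} asymptotic stability of $(0,\bar n_{2a},\bar n_{2d})$, not a basin of attraction containing your post-Phase-I state; the paper needs Lemma~\ref{lemma-3dODE} (convergence under the cone condition~\eqref{proportioncond}) together with Lemma~\ref{lemma-goodstart}. And to invoke Lemma~\ref{lemma-goodstart} one needs the active/dormant proportion at the end of Phase~I to be close to the eigenvector ratio $\pi_{2a}/\pi_{2d}$, which is the content of Proposition~\ref{prop-secondphase} and is not an immediate consequence of ``the branching process structure''.
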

Next, we identify the time of fixation of mutants in the case of a successful invasion.
\begin{theorem}\label{thm-success}
Under the assumptions of Theorem~\ref{thm-invasion}, we have that on the event $\{ T_{S_\beta} < T_0^2 \}$,
\[ \lim_{K \to \infty} \frac{T_{S_\beta}}{\log K} = \frac{1}{\widetilde\lambda}+\frac{1}{\mu+\alpha\bar n_{2a}-\lambda_1} \numberthis\label{invasion} \]
in probability.
\end{theorem}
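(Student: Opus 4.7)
My plan is to decompose the fixation time into the three phases outlined in Section~\ref{sec-invasionconditions}, following the adaptive dynamics methodology of \cite{C+16,C+19}. Fix small $\eps,\delta>0$, let $T^2_\eps=\inf\{t\geq 0: N^K_{2,t}\geq\eps\}$, and let $R_{\eps'}=\inf\{t\geq T^2_\eps: N^K_{1,t}\leq\eps',\ |(N^K_{2a,t},N^K_{2d,t})-(\bar n_{2a},\bar n_{2d})|\leq\beta/2\}$ for a suitable $0<\eps'<\eps$. It suffices to show that, on $\{T_{S_\beta}<T_0^2\}$, in probability as $K\to\infty$,
\[ \frac{T^2_\eps}{\log K}\longrightarrow\frac{1}{\widetilde\lambda}, \qquad R_{\eps'}-T^2_\eps=O_P(1), \qquad \frac{T_{S_\beta}-R_{\eps'}}{\log K}\longrightarrow\frac{1}{\mu+\alpha\bar n_{2a}-\lambda_1}, \]
and then to let $\eps,\delta\to 0$ after $K\to\infty$.

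\emph{Phase I.} While $N^K_{2,t}\leq\eps$ and $|N^K_{1,t}-\bar n_1|\leq\delta$, I couple $(KN^K_{2a,\cdot},KN^K_{2d,\cdot})$ from above and below by two-type linear birth-and-death processes with rates as in point \eqref{thirdpoint} of Section~\ref{sec-invasionconditions}, but with $\bar n_1$ replaced by $\bar n_1\pm\delta$. These perturbed processes remain supercritical for $\delta$ small, and their Perron eigenvalues converge to $\widetilde\lambda$ as $\delta\to 0$. Classical multitype branching process theory \cite{AN72} yields that on non-extinction the total size grows like $W\mathrm{e}^{\widetilde\lambda t}$ with $W>0$ a.s., so the hitting time of level $\eps K$ is $\frac{1}{\widetilde\lambda}\log K+O_P(1)$; the corresponding non-extinction probability equals $1-q$, matching Theorem~\ref{thm-invasion}. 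The coupling is valid because, while the mutants have size $o(K)$, the resident coordinate is well-approximated by a logistic birth-and-death process around $\bar n_1$ and stays within $\delta$ of $\bar n_1$ on $[0,C\log K]$ with probability $1-o(1)$, by standard exit-time estimates for locally stable logistic dynamics in the spirit of \cite{C+16}.

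\emph{Phases II and III.} After $T^2_\eps$ all sub-populations are of order $K$, and on any compact interval $[T^2_\eps,T^2_\eps+t^*_\eps]$ the process $\mathbf N^K_\cdot$ is uniformly close to the solution of the Lotka--Volterra system \eqref{3dimlotkavolterra} with the same initial condition, by the standard deterministic approximation of density-dependent Markov chains. By Proposition~\ref{prop-stableequilibrium3D}, under \eqref{invasionpossible} the equilibrium $(0,\bar n_{2a},\bar n_{2d})$ attracts an open set containing (with high probability) the value $\mathbf N^K_{T^2_\eps}$, so a deterministic time $t^*_\eps<\infty$ suffices for $\mathbf N^K_\cdot$ to enter the $(\beta/2)$-ball around it while $N^K_{1,\cdot}\leq\eps'$; hence $R_{\eps'}-T^2_\eps=O_P(1)$. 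After time $R_{\eps'}$, I couple $KN^K_{1,\cdot}$ above and below by linear birth-and-death processes with birth rate $\lambda_1$ and death rate $\mu+\alpha(\bar n_{2a}\pm\delta)$; these are subcritical by \eqref{invasionpossible} for $\delta$ small. The extinction time of such a subcritical process started from $\lfloor\eps' K\rfloor$ individuals is $\frac{\log K}{\mu+\alpha\bar n_{2a}-\lambda_1}+O_P(1)$, by classical facts recalled in \cite{C+16,C+19}. This coupling remains valid because the two-dimensional mutant sub-system, perturbed by the small residents, stays within $\delta$ of $(\bar n_{2a},\bar n_{2d})$ on $[R_{\eps'}, R_{\eps'}+C\log K]$ with high probability.

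The principal technical obstacle is keeping the couplings of Phases~I and III valid over time windows of length $\Theta(\log K)$: the non-coupled coordinates of $\mathbf N^K_\cdot$ must stay within $\delta$ of their respective equilibria throughout these windows with probability $1-o(1)$. This is achieved by combining exit-time estimates for locally stable logistic-type dynamics with the strong Markov property applied at $T^2_\eps$ and $R_{\eps'}$. Summing the three contributions and then sending $\delta,\eps\to 0$ yields \eqref{invasion}.
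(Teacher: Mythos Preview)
Your overall three-phase decomposition and the couplings in Phases~I and III match the paper's strategy (Propositions~\ref{prop-firstphase} and~\ref{prop-thirdphase}). The genuine gap is in Phase~II, and it is precisely the step the paper singles out as the main new difficulty.

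First, Proposition~\ref{prop-stableequilibrium3D} only gives \emph{local} asymptotic stability of $(0,\bar n_{2a},\bar n_{2d})$ via linearization; it does not show that the basin of attraction contains points of the form $(\bar n_1+O(\eps),\eps_a,\eps_d)$ with $\eps_a+\eps_d\approx\eps$, which is where the process sits at time $T^2_\eps$. Such points are far from the stable equilibrium, and the system~\eqref{3dimlotkavolterra} is not monotone (because of the switching terms and because dormant individuals are immune to competition), so the convergence you need does not follow from the usual Lotka--Volterra arguments in \cite{C+16,C+19}. The paper has to prove a dedicated global-attractor result, Lemma~\ref{lemma-3dODE}, which establishes convergence only under the explicit proportion condition~\eqref{proportioncond} on the initial data.

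Second, at time $T^2_\eps$ you know only the \emph{total} mutant size $N^K_{2,T^2_\eps}\approx\eps$; you have no control over the split $(N^K_{2a,T^2_\eps},N^K_{2d,T^2_\eps})$, so you cannot even specify the initial condition for the ODE approximation. The paper addresses this with Proposition~\ref{prop-secondphase}: between $T^2_\eps$ and $T^2_{\sqrt\eps}$ it finds a random time at which the active/dormant proportion is close to the Perron left-eigenvector $(\pi_{2a},\pi_{2d})$ of $J$, and Lemma~\ref{lemma-goodstart} then checks that such a state satisfies~\eqref{proportioncond}. Only after this extra step can Lemma~\ref{lemma-3dODE} be invoked to drive the deterministic flow into a neighbourhood of $(0,\bar n_{2a},\bar n_{2d})$. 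Your sketch omits both the proportion-control step and the nontrivial basin-of-attraction argument; without them Phase~II does not go through.
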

Finally, we show that in case of an unsuccessful mutation, with high probability, the extinction takes a sub-logarithmic time (in particular, the extinction happens during the first phase of the invasion), and at the time of extinction the resident population is close to its equilibrium population size.
\begin{theorem}\label{thm-failure}
Under the assumptions of Theorem~\ref{thm-invasion}, we have that on the event $\{ T_0^2 < T_{S_\beta}\}$, 
\[ \lim_{K \to \infty} \frac{T_0^2}{\log K} =0 \numberthis\label{extinction} \]
and
\[ \mathds 1 \{ T_{S_\beta} > T_0^2 \} \Big\vert \mathbf N^K_{T_0^2}-(\bar n_1,0,0) \Big\vert \underset{K \to \infty}{\longrightarrow} 0, \numberthis\label{lastoftheorem} \]
both in probability.
\end{theorem}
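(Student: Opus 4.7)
The plan is to exploit that on the extinction event $\{T_0^2 < T_{S_\beta}\}$ the mutant population stays microscopic throughout, so its dynamics can be compared with the bi-type linear branching process $(\widehat Z_{2a}, \widehat Z_{2d})$ introduced in the heuristics. Fix small $\eta, \varepsilon > 0$ and set $T^1_\eta = \inf\{ t \geq 0 \colon |N_1^K(t) - \bar n_1| > \eta \}$. On $[0, T^1_\eta \wedge T^2_\varepsilon]$ the competitive pressure felt by any mutant lies in $[\alpha(\bar n_1 - \eta), \alpha(\bar n_1 + \eta + \varepsilon)]$. Following the coupling constructions inspired by \cite{C+16, C+19}, this allows one to build on a common probability space two bi-type linear branching processes $\widehat Z^\pm$, with mean matrices $J^\pm$ converging to $J$ as $\eta, \varepsilon \to 0$, such that the extinction events for $(KN_{2a}^K, KN_{2d}^K)$ and for $\widehat Z^\pm$ coincide up to probability $o_{\eta,\varepsilon}(1)$ uniformly in $K$. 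Since the dominant eigenvalue $\widetilde\lambda$ of $J$ is strictly positive by \eqref{invasionpossible}, both $\widehat Z^\pm$ are supercritical with extinction probabilities close to $q$ for $\eta, \varepsilon$ small enough.

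By the Athreya--Ney $q$-transform \cite[Ch.~V.3]{AN72}, a supercritical multi-type branching process conditioned on extinction behaves as a subcritical branching process; in particular its extinction time $\tau^\pm$ is almost surely finite with exponentially decaying tails. Transferring this via the coupling yields $T_0^2 = O_{\P}(1)$ on the extinction event. Self-consistency of the coupling, namely $T^1_\eta > T_0^2$ with probability tending to one, then follows from Kurtz's fluid-limit theorem for $N_1^K$: since the absolute mutant count is bounded in probability throughout the extinction event, the rescaled resident density is a vanishing perturbation of the logistic ODE $\dot n_1 = n_1(\lambda_1 - \mu - \alpha n_1)$, for which $\bar n_1$ is an asymptotically stable fixed point.

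The two conclusions now follow: \eqref{extinction} is immediate from $T_0^2 = O_{\P}(1) = o_{\P}(\log K)$; and \eqref{lastoftheorem} follows from Kurtz's theorem applied on the bounded random time window $[0, T_0^2]$, yielding $N_1^K(T_0^2) \to \bar n_1$ in probability, while $N_{2a}^K(T_0^2) = N_{2d}^K(T_0^2) = 0$ by definition of $T_0^2$.

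The main technical obstacle is the branching-process coupling. A pathwise monotone coupling is not available because raising the ambient density $n_1 + n_{2a}$ both increases the competitive death rate for active mutants and increases their switching rate to the protective dormant state; the two effects pull in opposite directions on mutant survival. However, the transition rates of the true mutant process and of $\widehat Z^\pm$ differ by at most $O(\eta + \varepsilon)$ on $[0, T^1_\eta \wedge T^2_\varepsilon]$, and a joint Poisson-thinning construction ensures that the extinction events for the three processes coincide up to probability $o_{\eta,\varepsilon}(1)$, uniformly in $K$, adapting the analogous ingredient in \cite{C+16, C+19} to the presence of a dormant compartment.
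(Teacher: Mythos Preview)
Your overall strategy is the paper's: sandwich the mutant pair by bi-type linear branching processes while the resident sits near $\bar n_1$, and use that a supercritical branching process conditioned on extinction dies in finite time. The paper packages the first part as Proposition~\ref{prop-firstphase} and Lemma~\ref{lemma-residentsstay}, and then observes that \eqref{lastoftheorem} is immediate from \eqref{secondofprop} (on the extinction event $T_0^2<R_{2\eps}$, so $|N_{1,T_0^2}^K-\bar n_1|\le 2\eps$), while \eqref{extinction} follows from $T_0^2\le T_0^{(\eps,+),2}<\infty$ via the coupling~\eqref{upsilondef}.

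There is one substantive misstatement. You assert that a pathwise monotone coupling is unavailable because raising the ambient density increases both competitive death and protective switching. The paper does construct a coordinatewise monotone sandwich (see~\eqref{upsilondef} and the paragraph following it), and the point you are missing is that one need not vary a single ``ambient density'' parameter: one may perturb the death-by-competition rate and the active$\to$dormant switch rate \emph{independently}. For the lower process $Z^{\eps,-}$ one takes the maximal total competitive-event rate together with the \emph{minimal} switch rate (so excess events are routed to death rather than dormancy); for the upper process $Z^{\eps,+}$ one takes the minimal total competitive rate together with the \emph{maximal} switch rate. Since switching is strictly better than death but (for $\kappa>0$) strictly worse than no event, both choices are monotone in both coordinates, and one obtains $Z^{\eps,-}_{2\upsilon}\le KN^K_{2\upsilon}\le Z^{\eps,+}_{2\upsilon}$ for $\upsilon\in\{a,d\}$ on $[0,t_\eps]$. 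Your Poisson-thinning workaround is therefore unnecessary.

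A second point: your ``self-consistency'' step (concluding $T^1_\eta>T_0^2$ from Kurtz's theorem because the mutant count is bounded on the extinction event) is circular as written, since boundedness of the mutant comes from the branching comparison, which in turn requires the resident to already be near $\bar n_1$. The paper breaks this loop in Lemma~\ref{lemma-residentsstay}: on $[0,T_0^2\wedge T^2_{\eps^\xi}]$ one only knows $N_2^K\le\eps^\xi$, not $N_2^K=O(1/K)$, but that suffices to sandwich $N_1^K$ between two one-dimensional logistic birth--death processes and then invoke Freidlin--Wentzell exit-time bounds together with a Dynkin-formula estimate on $\E[T_0^2\wedge T^2_{\eps^\xi}\wedge R_{2\eps}]$. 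A bare appeal to Kurtz on a fixed horizon does not cover the a priori unbounded random time $T_0^2\wedge T^2_{\eps^\xi}$.
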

The proof of Theorems~\ref{thm-invasion}. \ref{thm-success}, and \ref{thm-failure} will be carried out in Section~\ref{sec-proof}. In multiple parts of the proof, we are able to employ arguments that are similar to the ones used in \cite{C+19, C+16} for the three phases of invasion in individual-based models in the context of emergence of homogamy, respectively speciation. A particular additional difficulty of our setting lies in guaranteeing convergence of the underlying dynamical system \eqref{3dimlotkavolterra} to its stable equilibrium $(0,\bar n_{2a},\bar n_{2d})$, in other words, in verifying certain global attractor properties of this equilibrium. Here, none of the methods of the two aforementioned papers are applicable (see the proof of Lemmas~\ref{lemma-2dODE} and~\ref{lemma-3dODE}). 
Our dynamical system is rather different from the ones considered in \cite{C+19, C+16}, which have stronger monotonicity properties but also exhibit non-hyperbolic equilibria. The lack of monotonicity in our system is due to the switches between activity and dormancy and to the fact that dormant individuals are not affected by competition. These differences also influence other parts of the proof of our main theorems nontrivially (see e.g.~the construction of the couplings in the proofs of Propositions~\ref{prop-firstphase} and \ref{prop-thirdphase}). 

\section{Discussion}\label{sec-discussion}
This section touches the following topics. In Section~\ref{sec-invasionpossible} we provide an interpretation of condition \eqref{invasionpossible} that is crucial for our main results and comment on the notion of {\em invasion fitness}. The relevance of competition-induced vs.\ spontaneous switching is discussed in Section~\ref{sec-discussion-compinduced}, and the case where the first mutant individual is initially dormant instead of active is discussed in Section~\ref{sec-discussion-startingwithadormant}. In Section~\ref{sec-biologists!} we comment on potential experimental studies related to the subject of this paper for model verification.
\subsection{Interpretation of the condition of the theorems}\label{sec-invasionpossible}
Condition \eqref{invasionpossible} is equivalent to the assertion that the advantage of residents caused by their higher birth rate is less than the advantage of the mutants caused by their ability to become dormant under competitive pressure, at the beginning of the invasion where the mutants are rare. Indeed, the right-hand side of \eqref{invasionpossible} equals the rate at which those active mutant individuals move to the seed bank that afterwards become active again before dying. Indeed, active mutants become dormant at rate $p \alpha \bar n_1$, and given that they have become dormant, the probability that they turn active again (instead of dying in the seed bank) is $\frac{\sigma}{\kappa\mu+\sigma}$. In the case $\kappa=0$ of no death in the seed bank, \eqref{invasionpossible} reduces to 
    \[ \frac{\lambda_1-\mu}{1} < \frac{\lambda_2-\mu}{1-p}, \]
where $1-p$ is the probability that a mutant affected by a competitive event dies. On the complementary event, this mutant will eventually become active again.

Note that $\lambda_2>\mu$ automatically follows from \eqref{invasionpossible} given that $\lambda_1>\mu$. Thus, our model is free from evolutionary suicide: mutants who are not able to survive on their own will not make the resident population go extinct with asymptotically positive probability.

The \emph{invasion fitness} is the exponential
growth rate of a mutant born with a given trait in the presence of the current
equilibrium population \cite[Section 1.3.2]{B19}. In the present setting, the precise formulation of such a quantity is not immediate, for the following reasons. First, the total mutant population size process $(K N_{2,t}^K)_{t \geq 0}$ is not Markovian and hence has no well-defined exponential rate. Second, the pair of active and dormant coordinates $((KN_{2a,t}^K, KN_{2d,t}^K))_{t \geq 0}$ is Markovian, but its initial growth rate depends delicately on the initial condition. More precisely, for $\kappa>0$, the mutant population has a lower probability to survive if it starts with one dormant and no active individual than if it starts with one active and no dormant one (see Section~\ref{sec-discussion-startingwithadormant} for further details). Nevertheless, if we define the invasion fitness as the principal eigenvalue (a.k.a.~Lyapunov exponent) $\widetilde\lambda$ of the mean matrix $J$, then this eigenvalue is positive if and only if the condition~\eqref{invasionpossible} holds, in other words, it has the same sign as the expression
\[ p(\lambda_1-\mu)\frac{\sigma}{\kappa\mu+\sigma} - \lambda_1 + \lambda_2. \]
This sign is positive (respectively zero or negative) if and only if the approximating branching process $((\widehat Z_{2a}(t),\widehat Z_{2d}(t))_{t \geq 0}$ is supercritical (respectively critical or subcritical). 
Further, according to~\cite[Section 7]{AN72}, $\widetilde\lambda$ is equal to the mean growth rate of the approximating branching process $(\widehat Z_{2a}(t),\widehat Z_{2d}(t))$, which makes it rightful to call this eigenvalue the invasion fitness. 
\subsection{A comparison between spontaneous and competition-induced switching, and the case without dormancy trait}\label{sec-discussion-compinduced}
We have seen that the bi-type mutant population is able to survive on its own if $\lambda_2>\mu$, and if \eqref{invasionpossible} holds, then the mutants will invade the population with positive probability even if $\lambda_2<\lambda_1$. Let us note that without the mutants having a dormancy trait (i.e., for $p=0$), even though mutants can still survive on their own as soon as $\lambda_2>\mu$, invasion is not possible as long as $\lambda_2 \leq \lambda_1$. This is true because the approximating branching process is not supercritical in this case. 

For $\kappa>0$, it is not even the case that mutants are fit on their own if the switching from activity to dormancy is not competition-induced but \emph{spontaneous}, i.e., if an active mutant individual switches to dormancy at some fixed rate $\sigma'>0$. There, in absence of residents, for large $K$, the rescaled mutant population is approximated by the system of ODEs
\[
\begin{aligned}
\dot n_{2a}(t) &  = n_{2a}(t)(\lambda_2-\mu-\alpha n_{2a}(t)-\sigma')+\sigma n_{2d}(t),\\
\dot n_{2d}(t) & = \sigma' n_{2a}(t) - (\kappa \mu+\sigma) n_{2d}(t).
\end{aligned} \numberthis\label{2dimlotkavolterra}
\]
Hence, the origin is asymptotically stable if and only if $(\lambda_2-\mu-\sigma')(-\kappa\mu-\sigma)-\sigma\sigma'<0$, i.e., 
\[ \lambda_2 < \mu + \frac{\kappa \mu\sigma'}{\kappa \mu+ \sigma}. \numberthis\label{effectivedeathrate} \]
I.e., there are values $\lambda_2>\mu$ such that the mutant population dies out with high probability if $K \to \infty$. The right-hand side of \eqref{effectivedeathrate} is the \emph{effective death rate}: indeed, an active individual dies at rate $\mu$, but additionally at rate $\sigma'$ it becomes dormant, where it dies with probability $\frac{\kappa\mu}{\kappa\mu+\sigma}$ before ever becoming active (and capable of reproducing) again.

In the case of spontaneous switching, it is easy to show that the matrix defined analogously to $J$ (cf.~\eqref{Jdef}) has no positive eigenvalue for $\lambda_2<\lambda_1$. I.e., mutant invasion is only possible if the birth rate of mutants is higher than the one of the residents.
  
We expect that in case both spontaneous and competition-induced switching are present in the model, the behaviour of the system remains similar to the case of purely competition-induced switching, however, with a higher effective death rate, and hence condition \eqref{invasionpossible} is not satisfactory for invasion; $\lambda_2$ has to satisfy a stronger condition, which can be derived similarly to \eqref{invasionpossible}. In order to keep the notation simple, we do not consider this case of combined switching in the present paper.
\subsection{Starting with one dormant individual}\label{sec-discussion-startingwithadormant}

Let us recall that $s_a$ is the extinction probability of the approximating bi-type branching process $((\widehat Z_{2a}(t),\widehat Z_{2d}(t))_{t \geq 0}$ starting from $(1,0)$, and $s_d$ the same probability starting from $(0,1)$. Note that the second equation of \eqref{extinctionequation} reads as
\[ s_d = \frac{\sigma s_a + \kappa \mu}{\kappa \mu+\sigma}. \numberthis\label{sdsa} \]
Note that for $\kappa=0$, \eqref{sdsa} reads as $s_d=s_a$. Thanks to the Markov property of our population process, \eqref{sdsa} can be interpreted as follows: given that $(\widehat Z_{2a}(0), \widehat Z_{2d}(0))=(0,1)$, with probability $\smfrac{\kappa\mu}{\kappa\mu+\sigma}$ the process dies out immediately at the first jump time that affects this single dormant individual. Else (i.e., with probability $\smfrac{\sigma}{\kappa\mu+\sigma}$), it jumps to $(1,0)$, where it has probability $s_a$ to die out. This argumentation also implies the following. Let $T_{1,0}$ be the expected extinction time of the mutant population starting from $(1,0)$ and $T_{0,1}$ the same starting from $(0,1)$. Then we have
\[ \E\big[T_{0,1}\mathds 1 \{ T_{0,1} <\infty \} \big] = \frac{1}{\kappa\mu+\sigma}+\frac{\sigma}{\kappa\mu+\sigma} \E\big[T_{1,0}\mathds 1 \{ T_{1,0}<\infty \}\big], \]
where $\frac{1}{\kappa\mu+\sigma}$ is the expected time of the first jump of the Markov chain. Hence, extinction probabilities and extinction times started from $(0,1)$ can easily be handled using the same quantities started from $(1,0)$. This is why our main results describe only the latter case.

\subsection{Experimental studies}\label{sec-biologists!}
It would be highly interesting to check the results of the present paper experimentally. In the spirit of the mathematical analysis of the Lenski experiment \cite{GKWY16,LT94} (that also exhibits the three phases of adaptive dynamics invasion),
one could think of setting up a controlled experiment where the environment is kept constant over time, with a relatively high but fixed amount of resources. Now, one would need to find two types of microorganisms such that both of them are able to survive on their own in this environment, but the first type reproduces faster, whereas only the second one has a dormancy trait, in such a way that condition \eqref{invasionpossible} holds for the parameters estimated in the experiment. Then, one would first have to establish a resident population of the first type, then augment it by a single individual (or several individuals) of the second type, and continue the experiment until one of the types becomes extinct. Repeating this experiment several times, it would become apparent whether the invasion of the second type has a positive probability, and whether the invasion probability would come close to the one predicted by our model.

Certainly, the model presented in this paper captures only a small number of features of natural populations. Hence, scenarios excluded by our model such as coexistence of the two types may occur in the experiment. This could lead to interesting feed-back and theoretical  model extensions.

\section{Proofs}\label{sec-proof}
This section is split into four parts: Section~\ref{sec-firstphase} investigates the first phase of the invasion: the growth or extinction of the mutants. The next two phases only occur if the mutants survive the first phase. Section~\ref{sec-secondphase} deals with the second phase, where the rescaled population size process is approximated by the system of ODEs \eqref{3dimlotkavolterra}, and Section~\ref{sec-thirdphase} describes the third phase where the resident population dies out. Using all these, we complete the proof of our theorems in Section~\ref{sec-proofremainder}. Throughout the proof we will assume that $\beta \in (0, \min \{ \bar n_{2a}, \bar n_{2d} \})$. 

\subsection{The first phase of invasion: growth or extinction of the mutant population}\label{sec-firstphase}
The analysis of this phase proceeds similarly to \cite[Section 3.1]{C+19}. However, the presence of dormancy induces nontrivial changes in some coupling arguments (see e.g.~the construction of the coupled process appearing in \eqref{upsilondef}). On the other hand, since we have a monomorphic resident population, some arguments can be simplified or omitted, and the order of proof ingredients will change accordingly.

We now define additional stopping times that will be relevant for this phase. The first one is the time when the resident population first leaves a small-neighbourhood of its equilibrium: for any $\eps>0$,
\[ R_\eps: = \inf \Big\{ t \geq 0 \colon \big| N_{1,t}^K - \bar n_1 \big| > \eps \Big\}. \]
Then our goal is to verify the following proposition.
\begin{prop}\label{prop-firstphase}
Assume that \eqref{invasionpossible} holds with $\lambda_1>\lambda_2>\mu$. Let $K \mapsto m_1^K$ be a function from $(0,\infty)$ to $[0,\infty)$ such that $m_1^K \in \smfrac{1}{K} \N_0$ and $\lim_{K \to \infty} m_1^K=\bar n_1$. Then there exists a function $f \colon (0,\infty) \to (0,\infty)$ tending to zero as $\eps \downarrow 0$ such that for any $\xi \in [1/2,1]$,
\[ \limsup_{K \to \infty} \Big| \P \Big( T_{\eps^\xi}^2 < T_0^2 \wedge R_{2 \eps}, \Big| \frac{T_{\eps^\xi}^2}{\log K} - \frac{1}{\widetilde \lambda} \Big| \leq f(\eps)~ \Big| \mathbf N_0^K=\big(m_1^K,\frac{1}{K},0\big)  \Big)-(1-q) \Big| = o_\eps(1) \numberthis\label{invasiontime} \]
and
\[ \limsup_{K \to \infty} \Big| \P\Big(T_0^2 < T_{\eps^\xi}^2 \wedge R_{2\eps}~ \Big|  \mathbf N_0^K=\big(m_1^K,\frac{1}{K},0\big)  \Big) - q \Big|=o_{\eps}(1), \numberthis\label{secondofprop} \]
where $o_{\eps}(1)$ tends to zero as $\eps \downarrow 0$.
\end{prop}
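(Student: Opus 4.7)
The strategy is the one pioneered in \cite{C06} and followed in \cite{C+16, C+19}: restrict the analysis to the event on which the resident coordinate stays inside $[\bar n_1 - 2\eps, \bar n_1 + 2\eps]$, and on this event sandwich the bi-type mutant process $((KN_{2a,t}^K, KN_{2d,t}^K))_{t \geq 0}$ between two bi-type linear branching processes that both approximate $(\widehat Z_{2a}, \widehat Z_{2d})$ from item~(\ref{thirdpoint}) of the heuristics, and whose parameters converge to those of $(\widehat Z_{2a}, \widehat Z_{2d})$ as $\eps \downarrow 0$. Concretely, I would introduce two branching processes $(Z_{2a}^{-,\eps}, Z_{2d}^{-,\eps})$ and $(Z_{2a}^{+,\eps}, Z_{2d}^{+,\eps})$ in which the resident density $\bar n_1$ in the rates of $(\widehat Z_{2a}, \widehat Z_{2d})$ is replaced by $\bar n_1 + 2\eps + \eps^\xi$ and $\bar n_1 - 2\eps$ respectively. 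Because competition on active mutants produces both a death contribution (at rate $(1-p)\alpha$ per encounter) and a switching contribution (at rate $p\alpha$ per encounter), the monotonicity of the Malthusian parameter $\widetilde\lambda^{\pm}(\eps)$ of these processes in $\eps$ is not obvious from its sign, but a direct computation of the characteristic polynomial shows that $\widetilde\lambda^\pm(\eps) \to \widetilde\lambda > 0$ as $\eps \downarrow 0$, and likewise the extinction probabilities $q^{\pm}(\eps)$ converge to $q$.

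Next I would construct the coupling on a common graphical representation (a family of independent Poisson processes driving births, natural deaths, competitive events, switches and resuscitations). As long as $t \leq R_{2\eps} \wedge T^2_{\eps^\xi}$, the number of resident+active mutant neighbours of any active mutant lies in $[\bar n_1 - 2\eps, \bar n_1 + 2\eps + \eps^\xi]$, so one can thin the competitive clocks of the true process to match the rates of either $Z^{+,\eps}$ or $Z^{-,\eps}$ and obtain, coordinatewise on that time interval,
\[
(Z^{-,\eps}_{2a}(t), Z^{-,\eps}_{2d}(t)) \;\le\; (KN^K_{2a,t}, KN^K_{2d,t}) \;\le\; (Z^{+,\eps}_{2a}(t), Z^{+,\eps}_{2d}(t)).
\]
I would handle the active/dormant asymmetry by coupling the $p\alpha$-switching events and the $(1-p)\alpha$-death events jointly: for the lower bound, extra competitive encounters in the true process (coming from the additional $2\eps$) may convert into either kills or dormancy switches for $Z^{-,\eps}$-particles that do not exist in $Z^{-,\eps}$; for the upper bound one turns off excess events analogously. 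This step is where the dormancy-induced coupling differs from \cite{C+19}, and it is the main technical obstacle.

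Using Athreya--Ney \cite[Ch.~V]{AN72}, for each supercritical $Z^{\pm,\eps}$ the survival probability starting from $(1,0)$ is $1 - q^\pm(\eps)$, and on the survival event $(Z_{2a}^{\pm,\eps}(t)+Z_{2d}^{\pm,\eps}(t))\e^{-\widetilde\lambda^\pm(\eps) t}$ converges a.s.\ to a positive random variable. This yields the hitting-time asymptotics
\[
\lim_{K \to \infty} \frac{T^{\pm,2}_{\eps^\xi K}}{\log K} \;=\; \frac{1}{\widetilde\lambda^{\pm}(\eps)} \quad \text{in probability on the survival event,}
\]
while on extinction, standard exponential-moment estimates for subcritical/critical-from-a-finite-state argument show that $T^{\pm}_{\text{ext}}/\log K \to 0$. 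Combining with the coupling bounds gives~\eqref{invasiontime} and~\eqref{secondofprop} up to the error $|q^{\pm}(\eps) - q| = o_\eps(1)$ and $|1/\widetilde\lambda^{\pm}(\eps) - 1/\widetilde\lambda| = o_\eps(1)$, provided the exit event $\{R_{2\eps} \le T^2_{\eps^\xi} \wedge T^2_0\}$ is negligible.

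Finally, I would control $R_{2\eps}$. While the mutant population stays below $\eps^\xi K$, its total contribution to the competitive pressure on residents is $O(\eps^\xi)$, so $N^K_{1,\cdot}$ is close in law to a one-dimensional logistic birth-death process with globally attractive equilibrium $\bar n_1$ (item~(1) of the heuristics). Standard large-deviation bounds of Freidlin--Wentzell type (as used e.g.\ in \cite[Thm.~3(c)]{C06}) imply that for any $T>0$ there exists $V(\eps)>0$ with $\P(R_{2\eps}\le \e^{KV(\eps)}) \to 0$, which dwarfs the relevant time scale $\log K / \widetilde\lambda$. Putting the three ingredients together and letting $\eps \downarrow 0$ after $K \to \infty$ yields the proposition. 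The delicate step, as mentioned, is the construction of the two-sided coupling that is robust against the dual (death vs.\ dormancy) effect of competition on mutants.
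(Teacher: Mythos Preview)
Your overall strategy matches the paper's: restrict to $A_\eps = \{T_0^2 \wedge T^2_{\eps^\xi} < R_{2\eps}\}$, sandwich the mutant process between two bi-type branching processes whose parameters converge to those of $(\widehat Z_{2a},\widehat Z_{2d})$, and control $R_{2\eps}$ separately via Freidlin--Wentzell. There are, however, two places where your sketch does not quite close.

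\textbf{The two-sided coupling.} Simply replacing $\bar n_1$ by $\bar n_1 + 2\eps + \eps^\xi$ (respectively $\bar n_1 - 2\eps$) in the rates of $(\widehat Z_{2a},\widehat Z_{2d})$ does \emph{not} give a coordinatewise coupling: with the naive replacement, your lower process $Z^{-,\eps}$ has a higher active$\to$dormant switching rate than the true process, so its dormant coordinate is not dominated by $KN^K_{2d}$. The paper resolves this by decoupling the total competitive-event rate from the switching rate: for the lower process it takes switching rate $p\alpha(\bar n_1 - 2\eps)$ (the \emph{minimum} possible) and diverts the excess competition $p\alpha(4\eps+\eps^\xi)$ into additional active deaths, so that the total active-loss rate is maximal while the dormant-influx rate is minimal; symmetrically for the upper process. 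Your paragraph on ``handling the active/dormant asymmetry'' gestures at this but does not identify the correct rates, and the phrase ``extra competitive encounters in the true process may convert into kills or dormancy switches for $Z^{-,\eps}$-particles that do not exist'' is not a valid coupling move.

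\textbf{Closing the $R_{2\eps}$ argument.} The Freidlin--Wentzell bound gives $\P(R^i_{2\eps}\le \e^{KV})\to 0$ for the auxiliary one-type processes $Y^i_1$, but to conclude $\P(R_{2\eps}\le T_0^2 \wedge T^2_{\eps^\xi})\to 0$ you also need $\P(T_0^2 \wedge T^2_{\eps^\xi} \wedge R_{2\eps} \ge \e^{KV})\to 0$. You cannot get this from the branching-process sandwich because that sandwich is only valid on $[0,R_{2\eps})$; appealing to the $\log K$ time scale at this point is circular. The paper breaks the circularity by a Lyapunov-function argument independent of the sandwich: it exhibits $g(n_1,n_{2a},n_{2d})=\gamma_1 n_{2a}+\gamma_2 n_{2d}$ with $\Lcal g \ge N^K_2$ on the relevant event (the existence of suitable $\gamma_1,\gamma_2$ is precisely where condition~\eqref{invasionpossible} enters), and then Dynkin's formula gives $\E[T_0^2 \wedge T^2_{\eps^\xi}\wedge R_{2\eps}] \le C\eps^\xi K$, which combined with Markov's inequality handles the missing term.
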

In order to prove the proposition, we first verify the following lemma.
\begin{lemma}\label{lemma-residentsstay}
Under the assumptions of Proposition~\ref{prop-firstphase}, there exists a positive constant $\eps_0$ such that for any $\xi \in [1/2,1]$ and $0<\eps \leq \eps_0$,
\[ \limsup_{K \to \infty} \P \big(R_{2\eps} \leq T^2_{\eps^\xi} \wedge T_0^2 \big)=0. \]
\end{lemma}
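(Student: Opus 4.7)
The plan is to sandwich the unscaled resident population $KN^K_{1,\cdot}$ between two autonomous one-dimensional logistic birth-and-death processes that do not depend on the mutant dynamics, and then to invoke standard large-population / exit-time estimates for each of them. The mutant sub-population is controlled only through the crude deterministic bound $N^K_{2a,t}\leq N^K_{2,t}<\eps^\xi$, which holds on $[0,T^2_{\eps^\xi}\wedge T_0^2]$ by definition of these stopping times.

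For the upper bound, introduce $\overline Z^K$ on $\Z_+$ with birth rate $n\lambda_1$ and death rate $n(\mu+\alpha n/K)$. Since dropping the non-negative mutant contribution to a resident's death rate can only enlarge the process, a common-Poisson coupling of jumps yields $\overline Z^K_t\geq KN^K_{1,t}$ for all $t\geq 0$. For the lower bound, introduce $\underline Z^K$ with birth rate $n\lambda_1$ and death rate $n(\mu+\alpha n/K+\alpha\eps^\xi)$. Before $T^2_{\eps^\xi}\wedge T_0^2$, the actual resident death rate is at most $\mu+\alpha(N^K_{1,t}+\eps^\xi)$, so the analogous coupling gives $\underline Z^K_t\leq KN^K_{1,t}$ on that interval. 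Rescaling by $1/K$, both processes converge on compact time intervals to the logistic ODEs $\dot x=x(\lambda_1-\mu-\alpha x)$ and $\dot x=x(\lambda_1-\mu-\alpha\eps^\xi-\alpha x)$, with globally attracting positive equilibria $\bar n_1$ and $\bar n_1-\eps^\xi$ (the latter is positive once $\eps_0$ is small in terms of $\bar n_1$).

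Next I would apply the Champagnat-type exit-time estimate for the one-dimensional logistic birth-and-death process (see e.g.\ \cite[Thm.~3(c)]{C06}): for every $\delta>0$ there exists $V(\delta)>0$ such that $\overline Z^K/K$ stays in $(\bar n_1-\delta,\bar n_1+\delta)$ throughout $[0,e^{VK}]$ with probability tending to one, and analogously for $\underline Z^K/K$ around $\bar n_1-\eps^\xi$. The random horizon $T^2_{\eps^\xi}\wedge T_0^2$ is dominated with high probability by $C(\eps)\log K\ll e^{VK}$: one couples the unscaled total mutant size from above by the supercritical two-type linear branching process of point~(3) in Section~\ref{sec-invasionconditions}, for which reaching size $\eps^\xi K$ or dying out before time $C(\eps)\log K$ has probability tending to one.

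Combining the two one-sided bounds, on a high-probability event $N^K_{1,t}\in[\bar n_1-\eps^\xi-\delta,\bar n_1+\delta]$ for every $t\in[0,T^2_{\eps^\xi}\wedge T_0^2]$. Choosing $\delta=\eps/2$ and $\eps_0$ small enough, this interval sits inside $(\bar n_1-2\eps,\bar n_1+2\eps)$ at least in the case $\xi=1$. I expect the main obstacle to lie in extending this to $\xi\in[1/2,1)$, where the crude worst-case equilibrium shift $\eps^\xi$ can exceed $\eps$: the lower bound then has to be sharpened so that it tracks the true, exponentially growing mutant trajectory rather than its terminal worst-case value. A natural way is a Gronwall-type estimate on the linearized deviation ODE $\dot v=-\alpha\bar n_1 v-\alpha\bar n_1 N^K_{2a,t}$, exploiting that $\int_0^{T^2_{\eps^\xi}}N^K_{2a,s}\,ds=O(\eps^\xi/\widetilde\lambda)$ thanks to exponential growth at rate $\widetilde\lambda$; this should yield a genuine displacement of the resident from $\bar n_1$ of order $\eps$ rather than $\eps^\xi$ and hence fit inside the prescribed $2\eps$-window uniformly in $\xi\in[1/2,1]$.
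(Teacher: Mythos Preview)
Your coupling of $N^K_1$ between two autonomous logistic processes matches the paper exactly, and your identification of the $\xi<1$ difficulty (the worst-case equilibrium shift $\eps^\xi$ exceeding the target window $2\eps$) is a real issue that any proof must address. However, the step where you bound the random horizon $T^2_{\eps^\xi}\wedge T_0^2$ is circular and does not work as written.

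The branching process of point~(3) in Section~\ref{sec-invasionconditions} has rates that depend on the resident sitting near $\bar n_1$; the coupling with it only holds on $[0,R_{2\eps})$, which is precisely the control you are trying to establish. Even if you replace it by a resident-independent majorant (say birth $\lambda_2$, death $\mu$), an \emph{upper} bound on the mutant population gives a \emph{lower} bound on $T^2_{\eps^\xi}$ and no bound at all on $T_0^2$ on the survival set. So the assertion ``$T^2_{\eps^\xi}\wedge T_0^2\leq C(\eps)\log K$ with high probability'' is not justified by the coupling you describe. The Gronwall idea in your last paragraph suffers from the same problem: estimating $\int_0^{T^2_{\eps^\xi}} N^K_{2a,s}\,\d s$ via exponential growth at rate $\widetilde\lambda$ again presupposes that the resident is near $\bar n_1$ throughout.

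The paper's way around this is to abandon the attempt to bound $T^2_{\eps^\xi}\wedge T_0^2$ altogether and instead bound $\E[R_{2\eps}\wedge T^2_{\eps^\xi}\wedge T_0^2]$ directly by a Dynkin-formula argument: one finds constants $\gamma_1,\gamma_2$ such that the linear test function $g(n_1,n_{2a},n_{2d})=\gamma_1 n_{2a}+\gamma_2 n_{2d}$ satisfies $\mathcal L g(\mathbf N^K_t)\geq N^K_{2,t}$ on the relevant stopped interval, which yields $\E[R_{2\eps}\wedge T^2_{\eps^\xi}\wedge T_0^2]\leq C\eps^\xi K$. The existence of such $\gamma_1,\gamma_2$ is exactly where condition~\eqref{invasionpossible} enters. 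Markov's inequality then makes the term $\P(R_{2\eps}\wedge T^2_{\eps^\xi}\wedge T_0^2>\e^{KV})$ vanish, and the Freidlin--Wentzell bound handles the complementary piece. This argument is insensitive to $\xi\in[1/2,1]$ and avoids the circularity.
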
 
\begin{proof}
We verify this lemma via coupling the rescaled population size $N_{1,t}^K$ with two birth-and-death processes, $Y_{1,t}^1$ and $Y_{1,t}^2$, on time scales where the mutant population is still small compared to $K$. More precisely, following \cite[Section 3.1.2]{C+19},
\[ Y_{1,t}^1 \leq N_{1,t}^K \leq Y_{1,t}^2, \qquad \text{a.s.} \qquad \forall t \leq T_0^2 \wedge T_{\eps^\xi}^2. \numberthis\label{firstresidentcoupling} \]
The latter processes will also depend on $K$, but we omit the notation $K$ from their nomenclature for simplicity. In order to satisfy \eqref{firstresidentcoupling}, the processes $Y_1^1=(Y_{1,t}^1)_{t \geq 0}$ and $Y_1^2=(Y_{1,t}^2)_{t \geq 0}$ can be chosen with the following birth and death rates
\begin{align*}
Y_{1,t}^1 \colon \quad & \frac{i}{K} \to \frac{i+1}{K} \quad \text{at rate} \quad i\lambda_1, \\
                 & \frac{i}{K} \to \frac{i-1}{K} \quad \text{at rate} \quad i \big( \mu+\alpha \frac{i}{K} + \alpha \eps^\xi \big).
\end{align*}
and
\begin{align*}
Y_{1,t}^2 \colon  \quad & \frac{i}{K} \to \frac{i+1}{K} \quad \text{at rate} \quad i\lambda_1, \\
                 & \frac{i}{K} \to \frac{i-1}{K} \quad \text{at rate} \quad i \big( \mu+\alpha \frac{i}{K} \big).
\end{align*}
Let us estimate the time until which the processes $Y_1^1$ and $Y_2^1$ stay close to the value $\bar n_1$. We define the stopping times
\[ R^i_{\eps}:=\inf \big\{ t \geq 0 \colon Y_{1,t}^i \notin [\bar n_1-\eps,\bar n_1+\eps]\big\}, \qquad i \in \{1,2\},~\eps>0. \]
For large $K$, according to \cite[Theorem 2.1, p.~456]{EK}, the dynamics of $Y_{1,t}^1$ is close to the one of the unique solution to
\[ \dot n = n(\lambda_1-\mu-\alpha n-\alpha \eps^{\xi}). \]
The equilibria of this ODE are $0$ and $\bar n_1^{(\eps)}=\smfrac{\lambda_1-\mu-\alpha \eps^{\xi}}{\alpha}=\bar n_1-\eps^{\xi}$. Since $\lambda_1>\mu$, the latter equilibrium is positive for all sufficiently small $\eps>0$. Linearizing implies that for all small enough $\eps>0$ (namely, for $\eps$ such that $\alpha \eps^\xi < \lambda_1-\mu$), the equilibrium $0$ is unstable and the one $\bar n_1^{(\eps)}$ is asymptotically stable. A direct analysis of the sign of $n(\lambda_1-\mu-\alpha n-\alpha \eps^{\xi})$ implies that for such $\eps$, any solution with a positive initial condition converges to the stable equilibrium $\bar n_1^{(\eps)}$ as $t \to \infty$. These also imply that there exists $\eps_0 >0$ such that for all $0 <\eps \leq \eps_0$,
\[ \big|\bar n_1-\bar n_1^{(\eps)}\big| = \eps^{\xi} \qquad \text{ and } \qquad 0 \notin [\bar n_1-2 \eps, \bar n_1+2 \eps]. \]
Now, using a result about exit of jump processes from a domain by Freidlin and Wentzell \cite[Chapter 5]{FW84}, 
there exists a family (over $K$) of Markov jump processes $\widetilde Y^1_1=(\widetilde Y^1_{1,t})_{t \geq 0}$ whose transition rates are positive, bounded, Lipschitz continuous, and uniformly bounded away from 0 such that for
\[ \widetilde R_\eps^1 = \inf \big\{ t \geq 0 \colon \widetilde Y^1_{1,t} \notin [\bar n_1-\eps,\bar n_1+\eps]\big\}, \qquad i \in \{1,2\},~\eps>0, \]
there exists $V>0$ such that
\[ \P(R^1_{2\eps}>\e^{KV}) = \P(\widetilde R^1_{2\eps}>\e^{KV}) \underset{K \to \infty}{\longrightarrow} 0. \numberthis\label{FW1} \]
Using similar arguments for $N_{1}^2$, we derive that for $\eps>0$, $V>0$ small enough, we have that
\[ \P(R^1_{2\eps}>\e^{KV})\underset{K \to \infty}{\longrightarrow} 0.  \numberthis\label{FW2} \]
Now, on the event $\{ R_{2\eps} \leq T_0^2 \wedge T_{\eps^\xi}^2 \}$ we have $R_{2\eps} \geq R_{2\eps}^1 \wedge R_{2\eps}^2$. Using \eqref{FW1} and \eqref{FW2}, we derive that
\[ \limsup_{K \to \infty} \P \big( R_{2\eps} \leq \e^{KV}, R_{2\eps} \leq T_0^2 \wedge T_{\eps^\xi}^2 \big) = 0. \]
Moreover, using Markov's inequality,
\begin{align*}
    \P(R_{2\eps} \leq T_0^2 \wedge T_{\eps^\xi}^2) & \leq \P \big( R_{2\eps} \leq \e^{KV}, R_{2\eps} \leq T_0^2 \wedge T_{\eps^\xi}^2 \big) + \P(R_{2\eps} \wedge T_0^2 \wedge T_{\eps^\xi}^2 \geq \e^{KV} \big) \\ &  \leq \P \big( R_{2\eps} \leq \e^{KV}, R_{2\eps} \leq T_0^2 \wedge T_{\eps^\xi}^2 \big) + \e^{-KV} \E(R_{2\eps} \wedge T_0^2 \wedge T_{\eps^\xi}^2 ). 
\end{align*}
Since we have
\[ \E \big[R_{2\eps} \wedge T_0^2 \wedge T_{\eps^\xi}^2 \big] \leq \E \Big[\int_0^{R_{2\eps} \wedge T_0^2 \wedge T_{\eps^\xi}^2} K N_{2,t}^K \d t \Big], \]
it suffices to show that there exists $C>0$ such that 
\[ \E \Big[\int_0^{R_{2\eps} \wedge T_0^2 \wedge T_{\eps^\xi}^2} K N_{2,t}^K \d t \Big] \leq C \eps^\xi K. \numberthis\label{expectedstoppingtime} \]
This can be done similarly to \cite[Section 3.1.2]{C+19}. 
Indeed, let $\Lcal$ be the infinitesimal generator of $(\mathbf N^K_t)_{t \geq 0}$. We want to show that there exists a function $g \colon (\smfrac{1}{K} \N_0)^3 \to \R$ defined as
\[ g(n_{1},n_{2a},n_{2d})=\gamma_1 n_{2a} + \gamma_2 n_{2d} \numberthis\label{gamma1gamma2} \]
such that
\[ \Lcal g(\mathbf N_{t}^K) \geq N_{2,t}^K. \numberthis\label{largegenerator}\]
If \eqref{largegenerator} holds, then \eqref{expectedstoppingtime} follows because thanks to Dynkin's formula,
\[ 
\begin{aligned}
\E \Big[\int_0^{R_{2\eps} \wedge T_0^2 \wedge T_{\eps^\xi}^2} K N_{2,t}^K \d t \Big] & \leq  \E \Big[\int_0^{R_{2\eps} \wedge T_0^2 \wedge T_{\eps^\xi}^2} K\Lcal g(\mathbf N_{t}^K)\d t \Big] =\E\big[ K g(\mathbf N_{R_{2\eps} \wedge T_0^2 \wedge T_{\eps^\xi}^2}^K)-Kg(\mathbf N_{0}^K) \big] \\
& \leq (\gamma_1 \vee \gamma_2) \eps^\xi K - ( \gamma_1 \wedge \gamma_2 ),
\end{aligned}
\]
which implies \eqref{expectedstoppingtime}, independently of the signs of $\gamma_1$ and $\gamma_2$. Let us apply the infinitesimal generator $\mathcal L$ to the function $g$ defined in \eqref{gamma1gamma2}. We obtain 
\[ \Lcal g(\mathbf N_t^K) = N_{2a,t}^K \big[ (\lambda_2-\mu-\alpha (N_{1,t}^K+N_{2a,t}^K))\gamma_1 +p\alpha (N_{1,t}^K+N_{2a,t}^K) \gamma_2 \big] + N_{2d,t}^K \big[ \sigma  \gamma_1 -(\kappa\mu+\sigma)  \gamma_2 \big]. \] 
Hence, according to \eqref{largegenerator}, it sufficies to show that there exists $\gamma_1,\gamma_2 \in \R$ such that the following system of inequalities is satisfied:
\begin{align}
    (\lambda_2-\mu-\alpha (N_{1,t}^K+N_{2a,t}^K))\gamma_1 +p\alpha (N_{1,t}^K+N_{2a,t}^K) \gamma_2 & >1, \label{first-gamma1gamma2} \\
     \sigma \gamma_1 -(\kappa\mu+\sigma)\gamma_2 & >1. \label{second-gamma1gamma2}
\end{align}
Since $N_{1,t}^K+N_{2a,t}^K$ varies in $t$, the system \eqref{first-gamma1gamma2}--\eqref{second-gamma1gamma2} of inequalities is not easy to handle. However, for $t \in [0,R_{2\eps} \wedge T_0^2 \wedge T_{\eps^\xi}^2]$, we have $\alpha N_t^K \leq \alpha (\bar n_1+2\eps+\eps^\xi)$ and $p\alpha N_t^K \geq p\alpha (\bar n_1-2\eps)$. Hence, 
\[ (\lambda_2-\mu-\alpha (N_{1,t}^K+N_{2a,t}^K))\gamma_1 +p\alpha (N_{1,t}^K+N_{2a,t}^K) \gamma_2  \geq (\lambda_2-\mu-\alpha (\bar n_1+2\eps+\eps^\xi)) \gamma_1 + p \alpha (\bar n_1-2\eps)\gamma_2, \]
which implies that \eqref{first-gamma1gamma2} is satisfied as soon as
\[  (\lambda_2-\mu-\alpha (\bar n_1+2\eps+\eps^\xi)) \gamma_1 + p \alpha (\bar n_1-2\eps)\gamma_2>1, \] 
which, according to the definition of $\bar n_1$, can also written as
\[ (\lambda_2-\lambda_1-2\eps-\eps^\xi)\gamma_1 + p(\lambda_1-\mu-2\eps) \gamma_2 >1. \numberthis\label{firststronger-gamma1gamma2} \]
Let us verify the existence of $\gamma_1$ and $\gamma_2$ satisfying \eqref{firststronger-gamma1gamma2} and \eqref{second-gamma1gamma2}. First of all, we can rewrite \eqref{second-gamma1gamma2} as follows
\[ \gamma_2 < \frac{\sigma \gamma_1-1}{\kappa\mu+\sigma}. \numberthis\label{gamma2rewritten} \]
Hence, let us first consider the equation
\[ (\lambda_2-\lambda_1-2\eps-\eps^\xi)\gamma_1 + p(\lambda_1-\mu-2\eps) \frac{\sigma \gamma_1-1}{\kappa\mu+\sigma} > 1. \numberthis\label{withgamma2bound} \]
The inequality \eqref{invasionpossible} is satisfied by assumption, and hence there exists $\eps>0$ such that
\[ (\lambda_2-\lambda_1-2\eps-\eps^\xi) + p(\lambda_1-\mu-2\eps) \frac{\sigma}{\kappa\mu+\sigma} > 0. \]
Hence, $(\lambda_2-\lambda_1-2\eps-\eps^\xi)\gamma_1 + p(\lambda_1-\mu-2\eps) \frac{\sigma\gamma_1}{\kappa\mu+\sigma}$ tends to infinity as $\gamma_1 \to \infty$, in particular, for all sufficiently large $\gamma_1$ it is strictly larger than $1+\smfrac{p(\lambda_1-\mu-2\eps)}{\kappa\mu+\sigma}$, and thus \eqref{withgamma2bound} holds. By continuity of the function $x \mapsto p(\lambda_1-\mu-2\eps) x$, this implies that for any $\gamma_1$ satisfying \eqref{withgamma2bound} there exists $\gamma_2$ satisfying \eqref{gamma2rewritten} such that \eqref{firststronger-gamma1gamma2} holds. We conclude the lemma.
\end{proof}
\begin{proof}[Proof of Proposition~\ref{prop-firstphase}.] In what follows, we consider our population process on the event
\[ A_\eps := \{ T_0^2 \wedge T_{\eps^\xi}^2 < R_{2\eps} \} \]
for sufficiently small $\eps>0$. On this event, the invasion or extinction of the mutant population will happen before the resident population substantially deviates from its equilibrium size. We couple on $A_\eps$ the process $(KN_{2a,t}^K,KN_{2d,t}^K)$ with two bi-type branching processes $(Z_{2a,t}^{\eps,-}, Z_{2d,t}^{\eps,-})$ and $(Z_{2a,t}^{\eps,+}, Z_{2d,t}^{\eps,+})$ on $\N_0^2$ (which again depend on $K$, but we omit that from the notation for readability) such that almost surely, for any $t <  t_\eps:=T_0^2 \wedge T_{\eps^\xi}^2 \wedge R_{2\eps}$ and $\upsilon \in \{ a,d \}$,
\[ \begin{aligned}
Z_{2\upsilon,t}^{\eps,-} & \leq \widehat Z_{2\upsilon}(t) \leq Z_{2\upsilon,t}^{\eps,+}, \\
Z_{2\upsilon,t}^{\eps,-} & \leq K N_{2\upsilon,t}^K \leq Z_{2\upsilon,t}^{\eps,+},
\end{aligned} \numberthis\label{upsilondef}
\]
where we recall the approximating branching process $(\widehat Z_{2a}(t),\widehat Z_{2d}(t))$ defined in Section~\ref{sec-invasionconditions}.
We claim that in order to satisfy \eqref{upsilondef}, these processes can be defined with the following jump rates:
\[ \begin{aligned}
(Z_{2a,t}^{\eps,-},Z_{2d,t}^{\eps,-}) \colon \quad & (i,j) \to (i+1,j) & \text{at rate} & \quad i \lambda_2, \\
& (i,j) \to (i-1,j)& \text{at rate}  &\quad i(\mu+(1-p)\alpha(\eps^\xi+\bar n_1+2\eps)+p\alpha(4\eps+\eps^\xi)), \\
& (i,j) \to (i-1,j+1) & \text{at rate}& \quad ip\alpha(\bar n_1-2\eps), \\
& (i,j) \to (i+1,j-1) & \text{at rate} &\quad j\sigma, \\
& (i,j) \to (i,j-1) & \text{at rate} &\quad j\kappa\mu,
\end{aligned}
\]
and
\[ \begin{aligned}
(Z_{2a,t}^{\eps,+},Z_{2d,t}^{\eps,+}) \colon \quad & (i,j) \to (i+1,j) & \text{at rate} & \quad i \lambda_2, \\
& (i,j) \to (i-1,j)& \text{at rate}  &\quad i(\mu+(1-p)\alpha(\bar n_1-2\eps)-p\alpha(4\eps+\eps^\xi)), \\
& (i,j) \to (i-1,j+1) & \text{at rate}& \quad ip\alpha(\bar n_1+2\eps+\eps^\xi), \\
& (i,j) \to (i+1,j-1) & \text{at rate} &\quad j\sigma, \\
& (i,j) \to (i,j-1) & \text{at rate} &\quad j\kappa\mu.
\end{aligned}
\]
Informally speaking, \eqref{upsilondef} holds thanks to the fact that for branching processes having the same kind of transitions as $(KN_{2a,t}^K,KN_{2d,t}^K)_{t \geq 0}$, competition-induced switching to dormancy is more favourable for an active mutant individual than immediate death by competition, but not better, and for $\kappa>0$ strictly worse, than not being hit by a competitive event at all.

Now, for $\diamond \in\{+,-\}$ and for a fixed initial condition $(i,j)$, the total competitive event rate of $(Z_{2a,t}^{\eps,\diamond},Z_{2d,t}^{\eps,\diamond})$ is given as the sum of the $(i,j) \to (i-1,j)$ and the $(i,j) \to (i-1,j+1)$  jump rate corresponding to the process. Given that a competitive event has happened, the ratio of the probability of death by competition and the one of switching to dormancy is equal to the ratio of the $(i,j) \to (i-1,j)$ rate and the $(i,j) \to (i-1,j+1)$ rate. Further, for any fixed initial condition, $(Z_{2a,t}^{\eps,-},Z_{2d,t}^{\eps,-})$ has higher death rate, higher total competitive event rate, but lower rate for active$\to$dormant switching rate than $(\widehat Z_{2a}(t),\widehat Z_{2d}(t))$ for any $t \geq 0$ or than $(KN_{2a,t}^K,KN_{2d,t}^K)$ for $t < t_\eps$ on the event $A_\eps$, while all other rates are the same for all these processes. Birth-and-death processes are coordinatewise nonincreasing (for $\kappa>0$ decreasing) in the rate of competitive events. Indeed, after a competitive event the affected active mutant individual either dies immediately or moves to the seed bank, where it dies with probability less than one (but for $\kappa>0$ more than zero) before ever becoming active again. On the other hand, the $(i,j)\to(i-1,j+1)$ switching rates are the lowest for $(Z_{2a,t}^{\eps,-},Z_{2d,t}^{\eps,-})$, which ensures that less individuals enter the seed bank and the couplings \eqref{upsilondef} hold also for $\upsilon=d$. The corresponding inequalities for $(Z_{2a,t}^{\eps,+},Z_{2d,t}^{\eps,+})$ in \eqref{upsilondef} follow similarly since this process has the lowest rate for death and for competitive events in total but the highest rate for active$\to$dormant switching.

For $\diamond \in \{ +, - \}$, let $q^{(\eps,\diamond)}$ denote the extinction probability of the process $(Z_{2a,t}^{\eps,\diamond}, Z_{2d,t}^{\eps,\diamond})$ started from $(1,0)$. The extinction probability of a supercritical branching process is continuous with respect to all kinds of transitions that the mutant population in our model has. Given the total competitive event rate, this probability increases with the rate of active death by competition. Further, given the ratio between the rate of death by competition and the one of active$\to$dormant switching, it increases with the total competitive event rate. These assertions can be proven using the methods of \cite[Sections A.3]{C+19}. Hence, by the first line of \eqref{upsilondef}, we have
$q^{(\eps,+)} \leq q \leq q^{(\eps,-)}$ for fixed $\eps>0$ and
\[ 0 \leq \liminf_{\eps \downarrow 0} \big| q^{(\eps,\diamond)}-q \big| \leq \limsup_{\eps \downarrow 0} \big| q^{(\eps,\diamond)}-q \big| \leq \limsup_{\eps \downarrow 0} \big| q^{(\eps,-)}-q^{(\eps,+)} \big| = 0, \qquad  \forall \diamond \in \{ +, - \}, \numberthis\label{qineq} \]
where we recall the extinction probability $q$ defined in \eqref{qdef}.  

Next, we prove that the probabilities of extinction and invasion of the actual process $(N_{2a,t}^K,N_{2d,t}^K)$ also tend to $q$ and $1-q$, respectively, with high probability as $K \to \infty$. We define the stopping times
\[ T_x^{(\eps,\diamond),2} := \inf \{ t >0 \colon Z^{(\eps,\diamond)}_{2a}(t) +Z^{(\eps,\diamond)}_{2d}(t)= \lfloor K x \rfloor \}, \qquad \diamond \in \{ +, - \}, x \in \R. \]
Thanks to the coupling in the second line of \eqref{upsilondef}, which is valid on $A_\eps$, we have
\[ \P\big(T_{\eps^\xi}^{(\eps,-),2} \leq T_0^{(\eps,-),2}, A_\eps\big) \leq \P\big(T_{\eps^\xi}^{2} \leq T_0^{2},A_\eps \big) \leq  \P\big(T_{\eps^\xi}^{(\eps,+),2} \leq T_0^{(\eps,+),2}, A_\eps\big) \numberthis\label{couplednonextinction} \] 
Indeed, if a process reaches the size $K\eps^\xi$ before dying out, then the same holds for a larger process. However, $A_\eps$ is independent of $(Z_{2a,t}^{\eps,\diamond}, Z_{2d,t}^{\eps,\diamond})$ for both $\diamond = +$ and $\diamond = -$, and hence 
\[ \liminf_{K \to \infty} \P\big( T_{\eps^\xi}^{(\eps,-),2}\leq T_0^{(\eps,-),2}, A_\eps \big)=\liminf_{K \to \infty}   \P(A_\eps)\P\big( T_{\eps^\xi}^{(\eps,-),2} \leq T_0^{(\eps,-),2})  \geq (1-q^{(\eps,-)})(1-o_\eps(1)) \numberthis\label{eps-LB} \]
and
\[ \limsup_{K \to \infty}  \P\big( T_{\eps^\xi}^{(\eps,+),2}\leq T_0^{(\eps,+),2}, A_\eps \big) =\limsup_{K \to \infty}   \P(A_\eps) \P\big( T_{\eps^\xi}^{(\eps,+),2} \leq T_0^{(\eps,+),2})  \leq (1-q^{(\eps,+)})(1+o_\eps(1)). \numberthis\label{eps+UB} \]
Letting $K \to \infty$ in \eqref{couplednonextinction} and applying \eqref{eps-LB} and \eqref{eps+UB} yields that 
\[ \limsup_{K \to \infty} \big| \P(T_{\eps^\xi}^{2} \leq T_0^{2},A_\eps )-(1-q) \big|=o_\eps(1), \]
as required. The equation \eqref{secondofprop} can be derived similarly. 

It remains to show that in the case of invasion, 
the time before reaching size $K \eps^\xi$ is of order $\log K/\widetilde \lambda$, where $\widetilde \lambda$ was defined in \eqref{lambdatildedef} as the maximal eigenvalue of the matrix $J$ defined in \eqref{Jdef}, which is positive under our assumptions. 

Let $\widetilde\lambda^{(\eps,\diamond)}$, $\diamond \in \{ +, - \}$, denote the maximal eigenvalue of the mean matrix of the process $(Z_{2a,t}^{\eps,\diamond}, Z_{2d,t}^{\eps,\diamond})$. This eigenvalue is positive for all small enough $\eps>0$ and converges to $\widetilde \lambda$ as $\eps \downarrow 0$. Hence, there exists a function $f \colon (0,\infty) \to (0,\infty)$ with $\lim_{\eps \downarrow 0} f(\eps)=0$ such that for all $\eps>0$ sufficiently small,
\[ \Big| \frac{\widetilde\lambda^{(\eps,\diamond)}}{\widetilde\lambda}-1\Big| \leq \frac{f(\eps)}{2}. \numberthis\label{eigenvaluesclose}\]
Let us fix $\eps$ small enough such that \eqref{eigenvaluesclose} holds. Then from the second line of \eqref{upsilondef} we deduce that
\[ \P \Big( T^{(\eps,-),2}_{\eps^\xi} \leq T^{(\eps,-),2}_{0} \wedge \frac{\log K}{\widetilde \lambda} (1+f(\eps)),A_\eps \Big) \leq \P \Big( T^{2}_{\eps^\xi} \leq T^{2}_{0} \wedge \frac{\log K}{\widetilde \lambda} (1+f(\eps)),A_\eps \Big). \]
Using this together with the independence between $A_\eps$ and $(Z_{2a,t}^{\eps,\diamond}, Z_{2d,t}^{\eps,\diamond})$ and employing \cite[Section 7.5]{AN72}, we obtain for $\eps>0$ small enough (in particular such that $f(\eps)<1$)
\[
\begin{aligned}
& \liminf_{K \to \infty} \P \Big( T^{(\eps,-),2}_{\eps^\xi} \leq T^{(\eps,-),2}_{0} \wedge \frac{\log K}{\widetilde \lambda} (1+f(\eps)),A_\eps \Big) 
\geq (1-q^{(\eps,-)})(1-o_\eps(1)). 
\end{aligned} 
\]
This inequality follows from computations that are analogous to \cite[Section 3.1.3, first display below (3.41)]{C+19}. 
Similarly, using the second line of \eqref{upsilondef}, we derive that for all sufficiently small $\eps>0$ 
\[ \limsup_{K \to \infty} \P \Big( T^{(\eps,+),2}_{\eps^\xi} \leq T^{(\eps,+),2}_{0} \wedge \frac{\log K}{\widetilde \lambda} (1+f(\eps)),A_\eps \Big) \leq (1-q^{(\eps,+)})(1+o_\eps(1)). \]
These together imply \eqref{invasiontime}, hence the proof of the proposition is finished.
\end{proof}

\subsection{The second phase of invasion: Lotka--Volterra phase}\label{sec-secondphase}
\subsubsection{Convergence to a dynamical system for large population size.}
Now we rigorously state in what sense our population process $(\mathbf N_t^K)_{t \geq 0}$ is close to the solution $(\mathbf n_t)_{t \geq 0}=(n_1(t),n_{2a}(t),n_{2d}(t))_{t \geq 0}$ of the system of ODEs \eqref{3dimlotkavolterra} for large $K$ given that the corresponding initial conditions are close to each other.  As for \eqref{3dimlotkavolterra}, note that the vector field is locally Lipschitz and solutions do not explode in finite time, which guarantees existence and uniqueness for a given initial condition. Let $\mathbf n^0 =(n_1^0,n_{2a}^0,n_{2d}^0) \in [0,\infty)^3$ be an initial condition, and let $(\mathbf n^{(\mathbf n^0)}(t))_{t \geq 0}$ be the unique solution of the ODE started from the initial condition $\mathbf n^0$. Then, \cite[Theorem 2.1, p.~456]{EK} implies the following. 

\begin{lemma}\label{lemma-EKconvergence}
Let $T>0$. Assume that $(\mathbf N_0^K)_{K \geq 1}$ converge in probability to some deterministic vector $\mathbf n^0 = (n_1^0,n_{2a}^0,n_{2d}^0) \in [0,\infty)^3$ as $K$ tends to infinity. Then
\[ \lim_{K \to \infty} \sup_{0\leq s \leq T} \big| \mathbf N^K(s)-\mathbf n^{(\mathbf n^0)}(s)\big| =0 \]
in probability, where $\vert \cdot \vert$ denotes the Euclidean norm on $\R^3$. 
\end{lemma}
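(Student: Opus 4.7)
The plan is to apply the classical convergence theorem of Ethier--Kurtz for density-dependent Markov chains \cite[Theorem 2.1, p.~456]{EK}, which is precisely the tool the authors invoke. First, I would write the rescaled chain in the Poisson-process (stochastic integral) representation
\begin{equation*}
\mathbf N^K_t = \mathbf N^K_0 + \sum_{\ell} \frac{\ell}{K}\, \Pi_\ell\Big( K \int_0^t \beta_\ell(\mathbf N^K_s)\,\d s \Big),
\end{equation*}
where $\ell$ ranges over the seven jump vectors visible in Section~\ref{sec-modeldef}, the $\Pi_\ell$ are independent standard Poisson processes, and the $\beta_\ell\colon[0,\infty)^3 \to[0,\infty)$ are the polynomial rate functions read off from the transition table. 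A direct computation shows that the aggregated drift $F(\mathbf n) := \sum_\ell \ell\, \beta_\ell(\mathbf n)$ coincides with the right-hand side of the ODE system \eqref{3dimlotkavolterra}.

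Second, I would verify the regularity hypotheses of the Ethier--Kurtz theorem. Each $\beta_\ell$ is a polynomial of degree at most two, hence locally Lipschitz; and since the ODE vector field $F$ is locally Lipschitz, the initial value problem \eqref{3dimlotkavolterra} with initial datum $\mathbf n^0$ has a unique solution on $[0,T]$, which is bounded by some $M=M(T,\mathbf n^0)<\infty$.

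Third, to handle the unboundedness of the state space I would use a localization argument. For $R>2M$ set $\tau_R^K := \inf\{t\geq 0\colon |\mathbf N^K_t|\geq R\}$ and replace each rate $\beta_\ell$ by a bounded, globally Lipschitz truncation $\beta_\ell^R$ agreeing with $\beta_\ell$ on $\{|\mathbf n|\leq R\}$. For the truncated chain, \cite[Theorem 2.1, p.~456]{EK} gives $\sup_{0\leq s\leq T}|\mathbf N^{K,R}_s - \mathbf n^{(\mathbf n^0)}(s)|\to 0$ in probability as $K\to\infty$. Since $\mathbf N^K_s$ and $\mathbf N^{K,R}_s$ coincide up to $\tau_R^K$, it remains to show $\P(\tau_R^K\leq T)\to 0$ as $K\to\infty$ (for $R$ large enough), which follows from the convergence of the truncated process together with $\sup_{0\leq s\leq T}|\mathbf n^{(\mathbf n^0)}(s)|\leq M< R/2$. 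Combined with the assumed in-probability convergence $\mathbf N^K_0\to \mathbf n^0$, this yields the conclusion.

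The only genuine issue is the a priori non-explosion control embedded in the last step, but this is mild because the quadratic competitive terms in the death rates of the residents and active mutants provide strong negative feedback: one can dominate the total active population by a logistic birth-and-death process and the dormant population by an independent Poisson accumulation driven by the square of the active one, yielding moment bounds uniform in $K$. The rest of the argument consists of routine verifications of the Ethier--Kurtz hypotheses on the truncated rates and a standard coupling between $\mathbf N^K$ and its truncation up to $\tau_R^K$.
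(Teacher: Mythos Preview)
Your proposal is correct and takes the same approach as the paper: the authors simply state that the lemma follows from \cite[Theorem 2.1, p.~456]{EK} without giving any further argument, so your sketch is in fact considerably more detailed than what appears in the paper. The Poisson representation, local Lipschitz verification, and localization/truncation step you outline are exactly the standard ingredients behind that citation.
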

\subsubsection{Mutant active--dormant proportions.}
On the event $\{ T_{\sqrt \eps}^2 < T_0^2 \wedge R_{2\eps} \} \subset A_{\eps}$, after time $T_{\eps}^2$ the total mutant population has size close to $\eps K$. Note that Proposition~\ref{prop-firstphase} provides us no coordinatewise information about the mutant population at this point in time. However, in order to guarantee convergence of the rescaled population process $(\mathbf N_t^K)_{t \geq 0}$ to a corresponding solution of the system of ODEs \eqref{3dimlotkavolterra}, we have to guarantee convergence of the initial conditions. We will thus show that with high probability, there exists a point in time in the interval $[T_\eps^2,T_{\sqrt \eps}^2]$ such that at this time, the resident population is still close to equilibrium, the total mutant population size is still at least of order $\eps K$ and the proportion of active and dormant mutants is close to the equilibrium proportion of the approximating branching process $((\widehat Z_{2a}(t),\widehat Z_{2d}(t)))_{t \geq 0}$. The present section is devoted to this problem. Next, in Section~\ref{sec-ODEconvergence}, we show that the ODE system \eqref{3dimlotkavolterra} started from the limiting initial condition converges to $(0,\bar n_{2a},\bar n_{2d})$ as $t \to \infty$.

Since $\widetilde \lambda$ is positive, the Kesten--Stigum theorem (see e.g.~\cite[Theorem 2.1]{GB03}) ensures that
we have
\[ \Big( \frac{\widehat Z_{2a}(t)}{\widehat Z_{2a}(t)+\widehat Z_{2d}(t)},\frac{\widehat Z_{2d}(t)}{\widehat Z_{2a}(t)+\widehat Z_{2d}(t)}\Big) \underset{K \to \infty}{\longrightarrow} (\pi_{2a},\pi_{2d}) \]
on the event of survival of the approximating branching process $((\widehat Z_{2a,t},\widehat Z_{2d,t}))_{t \geq 0}$,
where $(\pi_{2a},\pi_{2d})$ is the positive left eigenvector of $J$ defined in \eqref{Jdef} associated to $\widetilde \lambda$ such that $\pi_{2a}+\pi_{2d}=1$, which can be computed explicitly according to \eqref{lambdatildedef}.  We verify the next proposition, employing some arguments of \cite[Proposition 3.2]{C+19}.
\begin{prop}\label{prop-secondphase}
There exists $C>0$ sufficiently large such that for $\delta>0$ such that $\pi_{2a} \pm \delta \in (0,1)$, under the same assumptions as Proposition~\ref{prop-firstphase},
\[ \numberthis\label{mutantproportions}
\begin{aligned}
& \liminf_{K \to \infty} \P \Big( \exists t \in \big[ T_\eps^2, T^2_{\sqrt \eps}\big], \frac{\eps K}{C} \leq K N_{2,t}^K \leq \sqrt \eps K, \\ & \qquad \pi_{2a}-\delta < \frac{ N_{2a,t}^K}{ N_{2a,t}^K+N_{2d,t}^K} < \pi_{2a}+\delta \Big| T^2_{\sqrt \eps} < T_0^2 \wedge R_{2\eps} \Big) \geq 1-o_\eps(1).
\end{aligned}
\]
\end{prop}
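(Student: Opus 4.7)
The plan is to apply the multitype Kesten--Stigum theorem to the branching processes $(Z^{\eps,\pm})$ constructed in the proof of Proposition~\ref{prop-firstphase} (taken with $\xi = 1/2$), and then to transfer the resulting equilibration of the active/dormant proportions to the true process $(N_{2a}^K,N_{2d}^K)$. First, I would observe that the rates of $(Z^{\eps,\pm})$ depend continuously on $\eps$ and reduce to those of $(\widehat Z_{2a},\widehat Z_{2d})$ at $\eps=0$, so the leading eigenvalues $\widetilde\lambda^{(\eps,\pm)}$ of their mean matrices, as well as the corresponding normalized left eigenvectors $(\pi_{2a}^{(\eps,\pm)},\pi_{2d}^{(\eps,\pm)})$, converge respectively to $\widetilde\lambda$ and $(\pi_{2a},\pi_{2d})$ as $\eps\downarrow 0$.

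Next, on the event of non-extinction, Kesten--Stigum (see \cite[Ch.~V, Section~7]{AN72} or \cite{GB03}) yields $e^{-\widetilde\lambda^{(\eps,\diamond)} t}(Z_{2a,t}^{\eps,\diamond},Z_{2d,t}^{\eps,\diamond}) \to W^{(\eps,\diamond)}(\pi_{2a}^{(\eps,\diamond)},\pi_{2d}^{(\eps,\diamond)})$ almost surely, with $W^{(\eps,\diamond)}>0$ on survival; in particular the proportion $Z_{2a,t}^{\eps,\diamond}/(Z_{2a,t}^{\eps,\diamond}+Z_{2d,t}^{\eps,\diamond})$ converges a.s.\ to $\pi_{2a}^{(\eps,\diamond)}$. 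Combined with the first step, for any $\delta>0$ one can pick $\eps>0$ small and a deterministic time $t_0(\eps,\delta)$ such that for all $t\geq t_0(\eps,\delta)$ both proportions for $(Z^{\eps,\pm})$ lie in $[\pi_{2a}-\delta/2,\pi_{2a}+\delta/2]$ with probability close to $1$ given non-extinction. Since the computations behind~\eqref{invasiontime} show that $T_\eps^2$ is of order $\log K/\widetilde\lambda$ on the invasion event, the inequality $T_\eps^2 \geq t_0(\eps,\delta)$ holds with probability tending to one as $K\to\infty$, so both coupled branching processes have equilibrated their proportions by the first time $KN_{2,t}^K$ hits the level $\lfloor \eps K\rfloor$.

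The main obstacle is to transfer this equilibration from $(Z^{\eps,\pm})$ to $N^K$ itself. A coordinatewise sandwich alone is insufficient, because $\widetilde\lambda^{(\eps,+)}-\widetilde\lambda^{(\eps,-)}=O(\eps)$, so the magnitudes of $(Z^{\eps,+})$ and $(Z^{\eps,-})$ can drift apart by a factor $K^{O(\eps)}$ over a $\Theta(\log K)$ window, rendering bounds of the form $Z_{2a}^{\eps,-}/(Z_{2a}^{\eps,+}+Z_{2d}^{\eps,+})$ uninformative. Following the strategy of \cite[Proposition~3.2]{C+19}, I would circumvent this by constructing a finer coupling in which $(KN_{2a,t}^K,KN_{2d,t}^K)$ coincides exactly with a single two-type branching process $(\widetilde Z_{2a,t},\widetilde Z_{2d,t})$ whose rates are $O(\sqrt\eps)$-close to those of $(\widehat Z_{2a},\widehat Z_{2d})$, this coincidence being valid throughout $[0, T_{\sqrt\eps}^2\wedge R_{2\eps}]$: on $A_\eps$ we have $|N_{1,t}^K-\bar n_1|\leq 2\eps$ and $N_{2,t}^K\leq\sqrt\eps$, so the correction terms in the true jump rates of $(KN_{2a}^K,KN_{2d}^K)$ relative to $(\widehat Z_{2a},\widehat Z_{2d})$ are uniformly of size $O(\sqrt\eps)$, making the coincidence coupling feasible via thinning from independent Poisson clocks. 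Applying Kesten--Stigum to $(\widetilde Z)$ then yields the desired proportion bound for $N^K$ at time $T_\eps^2$. The total-size condition $\eps K/C\leq KN_{2,t}^K\leq\sqrt\eps K$ on an interval within $[T_\eps^2, T_{\sqrt\eps}^2]$ is then obtained by choosing $C$ sufficiently large, reflecting the possibility of a transient dip of $KN_{2,t}^K$ below $\eps K$ after the first crossing, which is controlled by standard fluctuation estimates for supercritical branching processes together with the Kesten--Stigum normalization applied to $(\widetilde Z)$.
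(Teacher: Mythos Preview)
Your proposal has a genuine gap at the ``coincidence coupling'' step. On $[0,T_{\sqrt\eps}^2\wedge R_{2\eps}]$ the jump rates of $(KN_{2a,t}^K,KN_{2d,t}^K)$ depend on the \emph{random, time-varying} quantity $N_{1,t}^K+N_{2a,t}^K$; the process is therefore not a two-type branching process, and there is no time-homogeneous $(\widetilde Z_{2a},\widetilde Z_{2d})$ with fixed per-capita rates that coincides with it exactly. Thinning from Poisson clocks reproduces the true process, not a branching process, so Kesten--Stigum does not apply to the object you would actually obtain. (Incidentally, the strategy of \cite[Proposition~3.2]{C+19} is not the one you describe: it is the semimartingale argument the paper uses here.)

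The paper proceeds quite differently. It works on $[T_\eps^2,T_{\sqrt\eps}^2]$, where the total mutant population is already of order $\eps K$, and analyses the proportion $N_{2a,t}^K/N_{2,t}^K$ directly via its semimartingale decomposition $M_2+V_2$. The martingale part $M_2$ has quadratic variation of order $(\text{time})/(\eps K)$, hence is negligible once $K$ is large; the finite-variation part $V_2$ has drift close to a quadratic $S(x)$ whose unique root in $(0,1)$ is exactly $\pi_{2a}$, so $S(x)\geq\theta/2>0$ for $x<\pi_{2a}-\delta$. A Yule upper bound gives $T_{\sqrt\eps}^2-T_\eps^2\geq\log\log(1/\eps)$ with high probability, and a supercritical lower bound gives $KN_{2,t}^K\geq\eps K/C$ on that window; together these force the proportion into $[\pi_{2a}-\delta,\pi_{2a}+\delta]$ before $T_{\sqrt\eps}^2$. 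This drift argument sidesteps entirely the issue that the mutant process is not a branching process.
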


\begin{proof}
If $\pi_{2a}-\delta <\frac{N_{2a,T_\eps^2}^K}{N_{2a,T_\eps^2}^K+N_{2d,T_\eps^2}^K}  < \pi_{2a}+\delta$, then there is nothing to show. Let us assume that
\[ \frac{N_{2a,T_\eps^2}^K}{N_{2a,T_\eps^2}^K+N_{2d,T_\eps^2}^K} \leq \pi_{2a}-\delta, \]
the symmetric case $\frac{N_{2a,T_\eps^2}}{N_{2a,T_\eps^2}+N_{2d,T_\eps^2}} \geq \pi_{2a}+\delta$ can be treated similarly. Let us introduce the event
\[ \widetilde A_\eps := \{ T^2_{\sqrt \eps} < T_0^2 \wedge R_{2\eps} \} \]
on which we conditioned in \eqref{mutantproportions}. Our first goal is to show that for $\eps>0$ small, with high probability, once the total mutant population size reaches $\eps K$, for sufficiently large $C>0$ it will not decrease to a level lower than $\eps K/C$ again before it reaches $\sqrt \eps K$. To be more precise, for $C>0$ we introduce the stopping time
\[ T_{\eps,\eps/C} = \inf \big\{ t \geq T_\eps^2 \colon N_{2,t}^K \leq \smfrac{\eps K}{C} \big\}. \]
Then our goal is to show that if $C$ is large enough, then $T^2_{\sqrt \eps}$ is larger than $T_\eps^2 + \log\log(1/\eps)$ and smaller than $T_{\eps,\eps/C}$. First of all, for all $\eps>0$ sufficiently small, since the coupling \eqref{upsilondef} is satisfied on $\widetilde A_\eps$ and the branching processes $(Z_{2a,t}^{\eps,-},Z_{2d,t}^{\eps,-})$ is supercritical, \cite[Lemma A.1]{C+19} implies that for $C$ large enough,
\[ \lim_{K \to \infty} \P \big( T_{\eps,\eps/C} < T^2_{\sqrt \eps} \big| \widetilde A_\eps \big) = 0. \numberthis\label{thisalso} \]
On the other hand, note that the total size of mutant individuals is stochastically dominated from above by a Yule process with birth rate $\lambda_2$. Thus, by \cite[Lemma A.2]{C+19}, we have
\[ \lim_{K \to \infty} \P \big( T^2_{\sqrt \eps} \leq T^2_\eps + \log \log (1/\eps) \big| \widetilde A_\eps \big) \leq \sqrt \eps (\log (1/\eps))^{\lambda_2}. \numberthis\label{Yulebound} \]
Using these, we want to show that the fraction $\frac{N_{2a,t}^K}{N_{2a,t}^K+N_{2d,t}^K} $ cannot stay below $\pi_{2a}-\delta$ on $[T_{\eps}^2,T_{\sqrt \eps}^2]$ with probability close to one. Let us define the following five independent Poisson random measures on $[0,\infty]^2$ with intensity $\d s \d \theta$:
\begin{itemize}
    \item $P_{2a}^{\rm b} (\d s, \d \theta)$ representing the birth events of the active mutant individuals,
    \item $P_{2a}^{\rm d} (\d s, \d \theta)$ representing the death events of the active mutant individuals,
    \item $P_{2a\to 2d}^{\rm s} (\d s, \d \theta)$ representing the active$\to$dormant switching events,
    \item $P_{2d}^{\rm d} (\d s, \d \theta)$ representing the death events of the dormant mutant individuals (for $\kappa=0$ this measure can be omitted),
    \item $P_{2d\to 2a}^{\rm s}(\d s \d \theta)$ representing the dormant$\to$active switching events.
\end{itemize}
The reason why competitive death events can be assumed as independent of active$\to$dormant switches is that the corresponding Poisson random measures can be obtained as an independent thinning of a Poisson random measure with survival probability $1-p$ respectively the complementary thinning (with survival probability $p$), which are independent Poisson random measures according to \cite[Section 5.1]{K93}. 
%
Let 
\[ \widetilde P_{2a}^{\rm b}(\d s, \d \theta):=P_{2a}^{\rm b}(\d s, \d \theta)-\d s \d \theta,\ldots,\widetilde P_{2d\to 2a}^{\rm s}(\d s,\d \theta):=P_{2d\to 2a}^{\rm s}(\d s,\d \theta)-\d s \d \theta \]
be the associated compensated measures. 
\begin{comment}{
Note that the mutant population sizes can be written as
\[
\begin{aligned}
N_{2a,t}^K & = N_{2a,0}^K + \int_0^t \int_{[0,\infty)} \Big( \mathds 1 \{ \theta \leq \lambda_2 N_{2a,s-}^K \} P_{2a}^{\rm b}(\d s, \d \theta) \\ & \quad - \mathds 1 \{ \theta \leq N_{2a,s-}^K(\mu+(1-p)\smfrac{\alpha}{K} (N_{1,s-}^K+N_{2a,s-}^K)) P_{2a}^{\rm d}(\d s, \d \theta) \\ & \qquad - \mathds 1 \{ \theta \leq p\alpha N_{2a,s-}^K  (N_{1,s-}^K+N_{2a,s-}^K)) \} P_{2a \to 2d}^{\rm s}(\d s, \d \theta)  \Big), \\ 
N_{2d,t}^K & = N_{2d,0}^K + \int_0^t \int_{[0,\infty)} \Big( \mathds 1 \{ \theta \leq p\alpha N_{2a,s-}^K  (N_{1,s-}^K+N_{2a,s-}^K))\} P_{2a \to 2d}^{\rm s}(\d s, \d \theta) 
\\ & \quad - \mathds 1 \{ \theta \leq \kappa\mu N_{2d,s-}^K \} P_{2d}^{\rm d}(\d s, \d \theta) - \mathds 1 \{ \theta \leq N_{2d,s-}^K \sigma \} P_{2d\to 2a}^{\rm s} \d s,\d \theta) \Big).
\end{aligned}
\]}\end{comment}
The fraction $\frac{N_{2a,t}^K}{N_{2a,t}^K+N_{2d,t}^K} $ is a semimartingale and can be decomposed as follows
\[ \frac{N_{2a,t}^K}{N_{2a,t}^K+N_{2d,t}^K}=\frac{N_{2a,T_\eps^2}^K}{N_{2a,T_\eps^2}^K+N_{2d,T_\eps^2}^K} + M_2(t)+ V_2(t), \qquad  t \geq T^2_\eps, \]
with $M_2$ being a martingale and $V_2$ a finite variation process such that
\begin{align*}
    M_2(t) & = \int_{T^2_\eps}^t \int_{[0,\infty)} \mathds 1 \{ \theta \leq \lambda_2 N_{2a,s-}^K \} \frac{N_{2d,s-}^K}{N_{2,s-}^K(N_{2,s-}^K+1)}  \widetilde P_{2a}^{\rm b} (\d s, \d \theta) \\
    & \quad - \int_{T^2_\eps}^t \int_{[0,\infty)} \mathds 1 \{ \theta \leq N_{2a,s-}^K(\mu+\alpha (1-p) (N_{1,s-}^K+N_{2a,s-}^K)) \} \frac{N_{2d,s-}^K}{N_{2,s-}^K(N_{2,s-}^K-1)}\widetilde P_{2a}^{\rm d} (\d s, \d \theta) \\
    & \quad - \int_{T^2_\eps}^t \int_{[0,\infty)} \mathds 1 \{ \theta \leq N_{2a,s-}^K(\alpha p (N_{1,s-}^K+N_{2a,s-}^K)) \} \frac{1}{N_{2,s-}^K}\widetilde P_{2a \to 2d}^{\rm s} (\d s, \d \theta) \\
     & \quad + \int_{T^2_\eps}^t \int_{[0,\infty)} \mathds 1 \{ \theta \leq \kappa \mu N_{2d,s-}^K \} \frac{N_{2a,s-}^K}{N_{2,s-}^K(N_{2,s-}^K-1)}\widetilde P_{2d}^{\rm d} (\d s, \d \theta) \\
     & \quad +  \int_{T^2_\eps}^t \int_{[0,\infty)} \mathds 1 \{ \theta \leq \sigma N_{2d,s-}^K \} \frac{1}{N_{2,s-}^K} \widetilde P_{2d \to 2a}^{\rm s} (\d s, \d \theta)
\end{align*}
and
\begin{align*}
    V_2(t) & = \int_{T^2_\eps}^t \Big\{ \lambda_2 N_{2a,s}^K \frac{N_{2d,s}^K}{N_{2,s}^K(N_{2,s}^K+1)}- N_{2a,s}^K (\mu+\alpha (1-p) (N_{1,s}^K+N_{2a,s}^K))  \frac{N_{2d,s}^K}{N_{2,s}^K(N_{2,s}^K-1)} \\ & \quad - N_{2a,s}^K \alpha p  (N_{1,s}^K+N_{2a,s}^K)) \frac{1}{N_{2,s}^K } \d s + \kappa \mu N^K_{2d,s}   \frac{N_{2a,s}^K}{N_{2,s}^K(N_{2,s}^K-1)} + \sigma N^K_{2d,s}  \frac{1}{N_{2,s}^K} \Big\} \d s.
\end{align*}
Further, the predictable quadratic variation of the martingale $M_2$ is given as follows
\[
\begin{aligned}
\langle M_2 \rangle_t& = \int_{T^2_\eps}^t \lambda_2 N_{2a,s}^K \frac{(N_{2d,s}^K)^2}{(N_{2,s}^K)^2(N_{2,s}^K+1)^2} \d s \\ & \quad + \int_{T_\eps^2}^t \mu N_{2a,s}^K (\mu+\alpha (1-p) (N_{1,s-}^K+N_{2a,s-}^K))  \frac{(N_{2d,s}^K)^2}{(N_{2,s}^K)^2(N_{2,s}^K-1)^2} \\ & \quad + N_{2a,s}^K \alpha p  (N_{1,s-}^K+N_{2a,s-}^K) \frac{1}{(N_{2,s}^K)^2 } \d s + \kappa \mu N^K_{2d,s}   \frac{(N_{2a,s}^K)^2}{(N_{2,s}^K(N_{2,s}^K-1))^2} + \sigma N^K_{2d,s}  \frac{1}{(N_{2,s}^K)^2}  \d s.
\end{aligned}
\]
This yields that there exists $C_0>0$ such that for all $t \geq T_\eps^2$,
\[ \langle M_2 \rangle_t \leq C_0(t-T_\eps^2) \sup_{T_\eps^2 \leq s \leq t} \frac{1}{N_{2,s}^K-1}. \]
This implies
\[ \langle M_2 \rangle_{(T_\eps^2+\log \log (1/\eps)) \wedge T_{\eps,\eps/C}} \leq \frac{C_0 \log \log (1/\eps)}{\smfrac{\eps K}{C}-1}  \numberthis\label{QVbound} \]
and
\[ 
\begin{aligned}
 & \limsup_{K \to \infty}  \mathbb P \Big( \sup_{T_\eps^2 \leq t \leq T_\eps^2 + \log \log (1/\eps)} |M_2(t)| \geq \eps \Big| \widetilde A_\eps \Big) \\
& \leq \limsup_{K \to \infty} \Big( \mathbb P\Big( \sup_{T_\eps^2 \leq t \leq (T_\eps^2 + \log \log (1/\eps)) \wedge T_{\eps,\eps/C}} |M_2(t)| \geq \eps \Big| \widetilde A_\eps \Big)
+ \P \big( T_{\eps,\eps/C} < T_\eps^2 + \log \log (1/\eps) \big| \widetilde A_\eps \big) \Big)
\\ & \leq \limsup_{K \to \infty} \frac{1}{\eps^2} \E \Big[ \langle M_2 \rangle_{(T_\eps^2+\log \log (1/\eps)) \wedge T_{\eps,\eps/C}} \Big| \widetilde A_\eps \Big]  + \sqrt \eps (\log 1/\eps)^{\lambda_2}= \sqrt \eps (\log 1/\eps)^{\lambda_2}, \end{aligned} \numberthis\label{martingaleestimate} \] 
where in the first inequality of the last line we used Doob's martingale inequality for the first term and \eqref{thisalso} together with \eqref{Yulebound} for the second term, and the last inequality of the last line is due to \eqref{QVbound}. 

Let us now consider the finite variation process $V_2$. This can be written as
\[ V_2(t)=\int_{T_\eps^2}^t P \Big( \frac{N_{2a,s}^K}{N_{2,s}^K}\Big) \frac{N_{2,s}^K}{N_{2,s}^K + 1} + Q^{(s)} \Big( \frac{N_{2a,s}^K}{N_{2,s}^K}\Big) \frac{N_{2,s}^K}{N_{2,s}^K - 1} + R^{(s)} \Big( \frac{N_{2a,s}^K}{N_{2,s}^K}\Big) \d s, \numberthis\label{V2transcript} \]
with
\begin{multline*} P (x) = \lambda_2 x(1-x), \quad Q^{(s)}(x)=(\kappa\mu-\mu-\alpha (1-p)(N_{1,s}^K+N_{2a,s}^K))) x(1-x), \\  R^{(s)}(x) =\sigma (1-x)-p\alpha\big(N_{1,s}^K+N_{2a,s}^K\big)x.   \end{multline*}
For $\eps>0$ small, on $[T_\eps^2,T_{\sqrt \eps}^2]$,  $Q^{(s)}$ and $R^{(s)}$ are close on $[0,1]$, respectively, to the polynomial functions $Q,R$ given as follows
\begin{multline*} \quad Q(x)=(\kappa\mu-\mu-\alpha(1-p)\bar n_1) x(1-x)=(\kappa\mu-\mu-(1-p)(\lambda_1-\mu))x(1-x), \\  R(x) =\sigma (1-x)-p\alpha\bar n_1 x=\sigma(1-x)-p(\lambda_1-\mu)x.   \end{multline*}
Thus, for given $\eps>0$, for all sufficiently large $K$, the integrand in \eqref{V2transcript}  is close to the polynomial function
\[ S(x)=(\lambda_2+\kappa\mu-\mu-(1-p)(\lambda_1-\mu)) x(1-x)+\sigma(1-x)-p(\lambda_1-\mu)x.\]
Since $S(0)>0$ and $S(1) < 0$, further, $S$ is of degree 2, the equation $\dot x=S(x)$ has a unique equilibrium in $(0,1)$. Now, let $(\pi_{2a},\pi_{2d})$ be the left eigenvector of the matrix $J$ defined in \eqref{Jdef} corresponding to the eigenvalue $\widetilde \lambda$ such that $\pi_{2a}+\pi_{2d}=1$. A direct computation implies that $\pi_{2a}$ is a root of $S$ and thus equal to this equilibrium. Thus, we can choose $\delta>0$ and $\theta>0$ such that $\pi_{2a}-\delta >0$ and for all $x < \pi_{2a}-\delta$, $S(x)>\theta/2$. By continuity, this implies that for all sufficiently small $\eps>0$ and accordingly chosen sufficiently large $K>0$, on the event $\widetilde A_\eps$ the following relation holds
\begin{equation}\label{polynomialcontinuity} 
    \forall s \in \big[ T_\eps^2, T^2_{\sqrt\eps} \big], \forall x \in (0,\pi_{2a}-\delta),
    P(x) \frac{N_{2,s}^K+1}{N_{2,s}^K}+ Q^{(s)} (x) \frac{N_{2,s}^K-1}{N_{2,s}^K} +R^{(s)}(x) \geq \frac{\theta}{2}>0. 
\end{equation}
Let us define
\[ \mathfrak t_{2a}^{(\eps)} : = \inf \Big\{ t \geq T_\eps^2 \colon \frac{N_{2a,t}^K}{N_{2d,t}^K} \geq \pi_{2a}-\delta \Big\}. \]
From \eqref{martingaleestimate} and \eqref{polynomialcontinuity} we obtain that on the event $\widetilde A_\eps$, for any $t \in [T_\eps^2,(T_\eps^2 + \log \log(1/\eps)) \wedge \mathfrak t_{2a}^{(\eps)} ]$,
\[ \pi_{2a}-\delta \geq \frac{N_{2a,t}^K}{N_{2,t}^K} \geq \frac{\theta}{2} \Big( \log\log(1/\eps) \wedge (\mathfrak t_{2a}^{(\eps)}-T_\eps^2) \Big) - \eps  \]
with a probability higher than $1-\sqrt \eps (\log(1/\eps))^{\lambda_2}$. Since $\smfrac{\theta}{2} \log \log (1/\eps)$ tends to $\infty$ as $\eps \downarrow 0$, it follows that for $\eps>0$ small, $\mathfrak t_{2a}^{(\eps)}$ is smaller than $T_\eps^2 + \log \log (1/\eps)$ and thus smaller than $T_{\sqrt \eps}^2$ with a probability close to 1 on the event $\widetilde A_\eps$, where we also used \eqref{Yulebound}.

Finally, note that each jump of the process $N_{2a,t}^K/N_{2,t}^K$ is smaller than $(\eps K/C+1)^{-1}$, and hence smaller than $\delta$ for all $K$ sufficiently large (given $\eps$). Thus, after the time $\mathfrak t_{2a}^{(\eps)}$, the process will be contained in the interval $[\pi_{2a}-\delta,\pi_{2a}+\delta]$ for some positive amount of time. Hence, we conclude the proposition.
\end{proof}
\subsubsection{Convergence of the dynamical system for large times.}\label{sec-ODEconvergence}
In this section, we first investigate the stability of the equilibria of the system of ODEs \eqref{3dimlotkavolterra} via linearization. Then we show convergence of the solution of the system to $(0,\bar n_{2a},\bar n_{2d})$ for initial conditions corresponding to Proposition~\ref{prop-secondphase}, and for the two-dimensional projection of the system even for any nonnegative initial condition that has at least one nonzero coordinate. As mentioned before, the behaviour of the dynamical system is rather different from the ones described in \cite{C+16, C+19}. 
\begin{prop}\label{prop-stableequilibrium3D}
Assume that \eqref{invasionpossible} holds. Then the system of ODEs \eqref{3dimlotkavolterra} admits precisely three equilibria: $(0,0,0)$, $(\bar n_1,0,0)$ and $(0,\bar n_{2a},\bar n_{2d})$, the first two of which are unstable, whereas the third one is asymptotically stable.
\end{prop}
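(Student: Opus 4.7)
The plan is to enumerate equilibria by case analysis on which coordinates vanish, then determine stability at each via linearization, relying on condition \eqref{invasionpossible} to rule out an interior coexistence equilibrium and to control the signs of the relevant eigenvalues.

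First I would find all nonnegative equilibria $(n_1, n_{2a}, n_{2d})$ of \eqref{3dimlotkavolterra}. Splitting by whether $n_1$ and $n_{2a}$ vanish: the case $n_1 = n_{2a} = 0$ forces $n_{2d} = 0$ from the second ODE (via $\sigma n_{2d} = 0$); the case $n_{2a} = 0, n_1 > 0$ forces $n_{2d} = 0$ and then $n_1 = \bar n_1$; the case $n_1 = 0, n_{2a} > 0$ reduces exactly to the two-dimensional mutant-only system \eqref{linearized} already analyzed in point \eqref{secondpoint} of Section~\ref{sec-invasionconditions}, whose unique positive equilibrium is $(\bar n_{2a}, \bar n_{2d})$. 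Finally, if $n_1, n_{2a} > 0$, the first equation gives $n_1 + n_{2a} = \bar n_1$; substituting in the second yields $n_{2d} = n_{2a}(\lambda_1-\lambda_2)/\sigma$; plugging this into the third and dividing by $n_{2a}$ leads to
\[
p(\lambda_1-\mu)\sigma = (\kappa\mu+\sigma)(\lambda_1-\lambda_2),
\]
which contradicts \eqref{invasionpossible}. Hence exactly the three claimed equilibria exist.

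Next I would compute the Jacobian
\[
J(n_1, n_{2a}, n_{2d}) = \begin{pmatrix} \lambda_1-\mu-2\alpha n_1-\alpha n_{2a} & -\alpha n_1 & 0 \\ -\alpha n_{2a} & \lambda_2-\mu-\alpha n_1-2\alpha n_{2a} & \sigma \\ p\alpha n_{2a} & p\alpha(n_1+2n_{2a}) & -(\kappa\mu+\sigma) \end{pmatrix}
\]
and evaluate at each equilibrium. At $(0,0,0)$ the matrix is upper triangular with $\lambda_1-\mu > 0$ on the diagonal, so this equilibrium is unstable. At $(\bar n_1, 0, 0)$ the matrix is block-triangular with eigenvalue $-(\lambda_1-\mu) < 0$ and a $2{\times}2$ lower-right block which, after using $\alpha\bar n_1 = \lambda_1-\mu$, coincides with the transpose of the mean matrix $J$ in \eqref{Jdef}; by the discussion in point \eqref{thirdpoint} of Section~\ref{sec-invasionconditions}, this block has a strictly positive eigenvalue under \eqref{invasionpossible}, so $(\bar n_1, 0, 0)$ is also unstable.

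Finally, at $(0, \bar n_{2a}, \bar n_{2d})$ the Jacobian is again block-triangular: the $(1,1)$-entry $\lambda_1-\mu-\alpha\bar n_{2a}$ is a single eigenvalue, and the $2{\times}2$ lower-right block is precisely $A(\bar n_{2a}, \bar n_{2d})$ from \eqref{Jacobian}, already shown in point \eqref{secondpoint} of Section~\ref{sec-invasionconditions} to have two eigenvalues with strictly negative real part. It remains to verify $\lambda_1-\mu-\alpha\bar n_{2a} < 0$. Rewriting \eqref{invasionpossible} as
\[
(\lambda_1-\mu)\left(1 - \frac{p\sigma}{\kappa\mu+\sigma}\right) < \lambda_2-\mu
\quad\Longleftrightarrow\quad
\lambda_1-\mu < \frac{(\lambda_2-\mu)(\kappa\mu+\sigma)}{\kappa\mu+(1-p)\sigma} = \alpha \bar n_{2a},
\]
gives exactly the required negativity. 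Hence all three eigenvalues at $(0,\bar n_{2a},\bar n_{2d})$ have strictly negative real part, so this equilibrium is asymptotically stable, completing the proof. The only nontrivial step is this last algebraic rearrangement of \eqref{invasionpossible}; everything else reduces to routine linear-algebraic computation exploiting the block-triangular structure inherited from the vanishing coordinates at each boundary equilibrium.
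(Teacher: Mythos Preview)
Your proof is correct and follows essentially the same approach as the paper: case analysis to enumerate equilibria, then linearization via the Jacobian at each. Your treatment of $(\bar n_1,0,0)$ is in fact slightly cleaner than the paper's: you observe the block-triangular structure and identify the lower-right $2{\times}2$ block with the transpose of the mean matrix $J$ of \eqref{Jdef}, so its positive eigenvalue $\widetilde\lambda$ is immediate from the discussion around \eqref{invasionpossible}; the paper instead computes the full determinant of $B(\bar n_1,0,0)$ and argues separately that the eigenvalues are real.
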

\begin{proof}
We easily identify the equilibria $(0,0,0)$,$(\bar n_1,0,0)$ and $(0,\bar x_a,\bar x_d)$, and we claim that further equilibria do not exist. Indeed, it is easy to see that apart from $(0,0,0)$, the only possible coordinatewise nonnegative equilibrium of the form $(0,\cdot,\cdot)$ is $(0,\bar x_a,\bar x_d)$ and the only possible one of the form $(\cdot,0,\cdot)$ or $(\cdot,\cdot,0)$ is $(\bar n_1,0,0)$. Hence, it remains to exclude the existence of equilibria with three positive coordinates. For such equilibria $(n_1,n_{2a},n_{2d})$, expressing $n_1$ from the first line of \eqref{3dimlotkavolterra} and substituting it into the second and third line divided by $n_{2a}$ yields
    \[ \frac{n_{2d}}{n_{2a}}=\frac{\lambda_1-\lambda_2}{\sigma}=\frac{1}{\kappa\mu+\sigma} p (\lambda_1-\mu), \]
    but the last inequality contradicts with \eqref{invasionpossible}. We conclude the claim. \\
    We continue with checking stability of the three equilibria. At any equilibrium $(n_1,n_{2a},n_{2d})$, the Jacobian matrix is given as
    \[ B(n_1,n_{2a},n_{2d}) = \begin{small}\begin{pmatrix}
    \lambda_1-\mu-2\alpha n_1-\alpha n_{2a} & -\alpha n_1 & 0 \\
    -\alpha n_{2a} & \lambda_2-\mu-2\alpha n_{2a}-\alpha n_1 & \sigma \\
    p\alpha n_{2a} & 2p\alpha n_{2a} + p \alpha n_{1} & -(\kappa\mu+\sigma)
    \end{pmatrix}\end{small}.
    \] As for the origin, $B$ takes the block diagonal form
    \[ B(0,0,0) = \begin{pmatrix}
    \lambda_1-\mu & 0 & 0 \\
   0 & \lambda_2-\mu & \sigma \\
    0 & 0 & -(\kappa\mu+\sigma)
    \end{pmatrix}.
    \]
    Its spectrum is the union of the spectra of the two blocks, hence $\lambda_1-\mu$ is an eigenvalue (with eigenvector $(1,0,0)$). Since this eigenvalue is positive, the origin is unstable. At $(\bar n_1,0,0)$, since $\alpha n_1=\lambda_1-\mu$, the Jacobian matrix is
    \[ B(\bar n_1,0,0)=\begin{pmatrix}
    -\lambda_1+\mu & -\lambda_1+\mu & 0 \\
    0 & \lambda_2-\lambda_1 & \sigma \\
   0 & p(\lambda_1-\mu) & -(\kappa\mu+\sigma)
    \end{pmatrix}. \numberthis\label{detdivision}
    \]
    The determinant of this matrix is
    \[ \det B(\bar n_1,0,0)=-(\lambda_1-\mu)((\lambda_2-\lambda_1)(-\kappa\mu-\sigma)-p(\lambda_1-\mu)\sigma). \]
    Now, since $\lambda_1>\mu$,
    further, thanks to \eqref{invasionpossible},
    \[ (\lambda_1-\lambda_2)(\kappa\mu+\sigma)-p(\lambda_1-\mu)\sigma < 0, \]
    the determinant is positive. Hence, in order to conclude that the equilibrium is unstable, it suffices to show that all eigenvalues are real. This follows from the fact that by \eqref{detdivision}, $\det  B(\bar n_1,0,0)/(\mu-\lambda)$ is negative. Since this quotient equals the product of the two other eigenvalues of the matrix, it is impossible that these eigenvalues are complex (and thus conjugate).
   Finally, let us consider the equilibrium $(0,\bar n_{2a},\bar n_{2d})$. We have 
     \[ B(0,\bar n_{2a},\bar n_{2d}) = \begin{pmatrix}
    \lambda_1-\mu-\alpha \bar n_{2a} & 0 & 0 \\
    0 & \lambda_2-\mu-2\alpha \bar n_{2a} & \sigma \\
    p\alpha \bar n_{2a} & 2p\alpha \bar n_{2a} & -(\kappa\mu+\sigma)
    \end{pmatrix}.
    \]
    We have already seen in Section~\ref{sec-invasionconditions} that $\lambda_1-\mu-\alpha \bar n_{2a} <0$ under condition \eqref{invasionpossible}, and this quantity is clearly an eigenvalue of the matrix $ B  (0,\bar n_{2a},\bar n_{2d})$. The other two ones are the eigenvalues of the matrix $A(\bar n_{2a},\bar n_{2d})$ (cf.~\eqref{Jacobian}), which are negative since $\lambda_2>\mu$, see also Section~\ref{sec-invasionconditions}. We conclude that $B(0,\bar n_{2a},\bar n_{2d})$ is negative definite and hence the equilibrium $(0,\bar n_{2a},\bar n_{2d})$ is asymptotically stable under condition \eqref{invasionpossible}.
    \end{proof}
    Now, for the two-dimensional variant 
    \begin{align*}
    \dot n_{2a}(t)&=n_{2a}(t)(\lambda_2-\mu-\alpha n_{2a}(t))+\sigma n_{2d}(t), \\
    \dot n_{2d}(t)&=p\alpha n_{2a}^2(t)-(\kappa\mu+\sigma) n_{2d}(t),
\numberthis\label{2doncemore}
\end{align*}
    of the system, introduced in \eqref{linearized}, which corresponds to starting the system \eqref{3dimlotkavolterra} from $\{ 0\} \times [0,\infty)^2$ and ignoring the invariant first coordinate, $(\bar n_{2a},\bar n_{2d})$ turns out to be the limit of the solution started from any nonnegative initial condition apart from $(0,0)$. Let us recall that this system has an asymptotically stable equilibrium $(\bar n_{2a},\bar n_{2d})$ and an unstable one $(0,0)$ under the assumption that $\lambda_2>\mu$.
    \begin{lemma}\label{lemma-2dODE}
       In case $(n_{2a}(0),n_{2d}(0)) \in [0,\infty)^2 \setminus \{ (0,0) \}$, we have \[ \lim_{t \to \infty} (n_{2a}(t),n_{2d}(t)) = (\bar n_{2a},\bar n_{2d}). \] 
    \end{lemma}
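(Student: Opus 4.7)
The plan is to invoke the Poincaré--Bendixson theorem on the planar system \eqref{2doncemore} after ruling out all $\omega$-limit behavior other than convergence to $(\bar n_{2a},\bar n_{2d})$. There are three ingredients: forward invariance and boundedness, exclusion of periodic orbits, and exclusion of the saddle $(0,0)$ (together with any cycle graph involving it) as an $\omega$-limit point.

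First, I would verify that the open positive quadrant $(0,\infty)^2$ is attainable and forward invariant from any $(n_{2a}(0),n_{2d}(0)) \neq (0,0)$ in $[0,\infty)^2$. Along $\{n_{2a}=0, n_{2d}>0\}$ the vector field has $\dot n_{2a}=\sigma n_{2d}>0$, and along $\{n_{2a}>0, n_{2d}=0\}$ it has $\dot n_{2d}=p\alpha n_{2a}^2>0$, so the trajectory immediately enters the open quadrant and stays there by uniqueness. For boundedness I would use the Lyapunov-type function $V(n_{2a},n_{2d}) = n_{2a} + C n_{2d}$ with $C$ chosen in the (nonempty) interval $\bigl(\sigma/(\kappa\mu+\sigma),\, 1/p\bigr)$, which is nonempty because $p\in(0,1)$ forces $\sigma/(\kappa\mu+\sigma)\le 1 < 1/p$; a short computation gives
\[ \dot V = (\lambda_2-\mu)\, n_{2a} - (1-Cp)\alpha\, n_{2a}^2 - \bigl(C(\kappa\mu+\sigma)-\sigma\bigr)\, n_{2d}, \]
with both $(1-Cp)\alpha$ and $C(\kappa\mu+\sigma)-\sigma$ strictly positive, so $\dot V<0$ outside a compact subset of $[0,\infty)^2$, yielding boundedness of every forward orbit.

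Next I would rule out closed orbits via Bendixson--Dulac on the simply connected set $(0,\infty)^2$ with Dulac function $B(n_{2a},n_{2d}) = 1/n_{2a}$. A direct computation yields
\[ \frac{\partial}{\partial n_{2a}}\!\left[\lambda_2-\mu-\alpha n_{2a}+\frac{\sigma n_{2d}}{n_{2a}}\right] + \frac{\partial}{\partial n_{2d}}\!\left[p\alpha n_{2a} - \frac{(\kappa\mu+\sigma)n_{2d}}{n_{2a}}\right] = -\alpha - \frac{\sigma n_{2d}}{n_{2a}^2} - \frac{\kappa\mu+\sigma}{n_{2a}} < 0 \]
on the whole open positive quadrant. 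Since the positive axes are not forward invariant, any periodic orbit in $[0,\infty)^2$ would have to lie entirely in $(0,\infty)^2$, which is precluded by Dulac.

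Finally, I would exclude $(0,0)$ from the $\omega$-limit set of any trajectory started in $(0,\infty)^2$, as well as any cycle graph involving both equilibria. The Jacobian of \eqref{2doncemore} at the origin has eigenvalues $\lambda_2-\mu>0$ and $-(\kappa\mu+\sigma)<0$; the stable eigenvector is proportional to $\bigl(\sigma, -(\lambda_2-\mu+\kappa\mu+\sigma)\bigr)$, whose second component is strictly negative, so the local stable manifold of $(0,0)$ is tangent to a direction pointing into the fourth quadrant and does not meet $(0,\infty)^2$ near the origin. Since $(\bar n_{2a}, \bar n_{2d})$ is a hyperbolic sink (as established via $A(\bar n_{2a},\bar n_{2d})$ in Section~\ref{sec-invasionconditions}), it neither supports a homoclinic loop nor emits any forward orbit that could close up a polycycle with the saddle, and a homoclinic loop at the saddle itself is impossible because its stable manifold avoids $(0,\infty)^2$. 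Combined with Poincaré--Bendixson, the $\omega$-limit set of any trajectory started in $(0,\infty)^2$ must equal $\{(\bar n_{2a}, \bar n_{2d})\}$. The main obstacle I foresee is the polycycle exclusion: the Dulac and Lyapunov steps are fairly routine, whereas the geometric argument about the stable and unstable invariant manifolds of the saddle relative to the positive quadrant requires the explicit eigenvector computation above.
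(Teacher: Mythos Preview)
Your proof is correct, but it follows a genuinely different route from the paper's. The paper does \emph{not} use a Dulac function or any stable-manifold analysis. Instead, it observes that the raw divergence $\lambda_2-\mu-2\alpha n_{2a}-(\kappa\mu+\sigma)$ is strictly negative on the half-strip $U=\{n_{2a}>\tfrac{\lambda_2-\mu}{2\alpha},\,n_{2d}>0\}$, applies the plain Bendixson criterion there, and then shows by a direct sign check that $\dot n_{2a}>0$ whenever $0<n_{2a}\le \tfrac{\lambda_2-\mu}{2\alpha}$, so that every nontrivial nonnegative orbit enters $U$ in finite time and $U$ is forward invariant. Since the only equilibrium in $\overline U$ is the sink $(\bar n_{2a},\bar n_{2d})$, Poincar\'e--Bendixson finishes the argument without ever confronting the saddle at the origin; boundedness is asserted but not proved.

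Your approach trades this geometric shortcut for more machinery: the Dulac multiplier $1/n_{2a}$ makes the divergence negative on the whole open quadrant (not just on $U$), and you then have to exclude the saddle explicitly via the eigenvector computation for $A(0,0)$. What you gain is a self-contained boundedness argument through the linear Lyapunov function $n_{2a}+Cn_{2d}$ (a point the paper leaves implicit), and a method that does not rely on the divergence becoming negative beyond some $n_{2a}$-threshold, so it would transfer more readily to variants of the system. What the paper gains is economy: no Dulac function, no stable-manifold theorem, and the saddle is avoided altogether by trapping the flow in a region that excludes it.
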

    \begin{proof}
Observe that the active coordinate of the stable equilibrium,
    \[ \bar n_{2a} = \frac{(\lambda_2-\mu)(\kappa\mu+\sigma)}{\alpha(\kappa\mu+(1-p)\sigma)}>0 \]
    satisfies
    \[ \frac{\lambda_2-\mu}{\alpha} < \bar n_{2a} \leq \frac{\lambda_2-\mu}{(1-p)\alpha},  \numberthis\label{xainequality} \]
    where the second inequality is an equality if and only if $\kappa=0$. Further, the dormant coordinate $\bar n_{2d}$ is positive. Note further that the divergence of the system is given as
    \[ \lambda_2-\mu-2\alpha n_{2a}(t)-(\kappa\mu+\sigma). \]
    This is certainly negative if $n_{2a} \geq \smfrac{\lambda_2-\mu}{2\alpha}$, $n_{2d} \geq 0$, and at least one of the latter two inequalities is strict. In particular, the Bendixson criterion~\cite[Theorem 7.10]{DLA06} implies that there is no nontrivial periodic solution in the open and simply connected set
    \[ U= \big\{ (n_{2a},n_{2d}) \in \mathbb R^2 \colon n_{2a} > \smfrac{\lambda_2-\mu}{2\alpha}, n_{2d} > 0 \big\}. \]
    Since this is a two-dimensional system and all solutions of the system with coordinatewise nonnegative initial conditions are bounded, this implies that any solution starting from $U$ converges to the equilibrium $(\bar n_{2a},\bar n_{2d}) \in U$. It remains to show that any solution started from $[0,\infty)^2 \setminus (\{ (0,0) \} \cup U)$ will enter the open set $U$ after finite time.
    
Now, observe that if $n_{2a}(0)> 0$ and $ n_{2d}(0) \geq 0$, then $\dot n_{2a}$ is positive and bounded away from zero until $n_{2a}$ reaches $\smfrac{\lambda_2-\mu}{2\alpha}$, hence $n_{2a}$ will reach this level. If $n_{2d}(0)>0$ and $n_{2a}(0)=0$, then there exists $\delta>0$ such that $n_{2a}(\delta)>0$ and $n_{2d}(\delta)>0$, and hence $n_{2a}$ will also reach the level $\smfrac{\lambda_2-\mu}{2\alpha}$ in finite time.  Further, for $t>0$, if $n_{2a}(t)=\smfrac{\lambda_2-\mu}{2\alpha}$ and $n_{2d}(t)\geq 0$, then plugging in the first inequality of \eqref{xainequality} to the first equation of \eqref{2doncemore} implies that $\dot n_{2a}(t)>0$. This implies that if $n_{2d}(t)>0$, then \[ (n_{2a}(t+\eps),n_{2d}(t+\eps)) \in U, \qquad \forall \eps>0 \text{ sufficiently small}. \numberthis\label{Uentrance} \]
Else, $\dot n_{2a}(t)=0$ but $\dot n_{2d}(t)>0$, and hence the observations of the previous case imply that $\dot n_{2a}(t+\eps)>0$ for all sufficiently small $\eps>0$, thus \eqref{Uentrance} also holds.
\end{proof}
Finally, we show convergence of the original 3-dimensional system to $(0,\bar n_{2a},\bar n_{2d})$ as $t \to \infty$ for initial conditions corresponding to Proposition~\ref{prop-secondphase}. In other words, we verify some global attractor properties of this equilibrium, which are not as general as for the two-dimensional system but sufficient for the goals of the present paper.
\begin{lemma}\label{lemma-3dODE}
Let us consider the system of ODEs \eqref{3dimlotkavolterra}. If the initial condition $(n_1,n_{2a},n_{2d})=(n_1(0),n_{2a}(0),n_{2d}(0))$ satisfies
\[ \frac{p\alpha(n_1+n_{2a})}{\kappa\mu+\sigma} > \frac{n_{2d}}{n_{2a}} > \frac{\mu-\lambda_2+\alpha(n_1+n_{2a})}{\sigma},  \qquad n_1 \geq 0, n_{2a},n_{2d}>0, \numberthis\label{proportioncond} \]
then 
\[ \lim_{t \to \infty} (n_1(t),n_{2a}(t), n_{2d}(t))=(0,\bar n_{2a},\bar n_{2d}). \numberthis\label{goodlimit} \]
\end{lemma}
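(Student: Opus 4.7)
The plan is to reduce the three-dimensional convergence to the two-dimensional convergence of Lemma~\ref{lemma-2dODE}, after establishing first that $n_1(t) \to 0$ as $t \to \infty$. Crucially, the two inequalities in \eqref{proportioncond} are, by direct inspection of \eqref{3dimlotkavolterra}, exactly equivalent to $\dot n_{2a}(0)>0$ and $\dot n_{2d}(0)>0$ respectively. Thus the initial condition sits in a region where both mutant components strictly grow.

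First I would verify some basic structural facts. The nonnegative octant $[0,\infty)^3$ is forward invariant, since on each coordinate hyperplane the corresponding component of the vector field is nonnegative ($\dot n_1\equiv 0$ on $\{n_1=0\}$, $\dot n_{2a}\geq \sigma n_{2d}\geq 0$ on $\{n_{2a}=0\}$, $\dot n_{2d}\geq 0$ on $\{n_{2d}=0\}$). A standard estimate on the total mass using the negative quadratic terms $-\alpha n_1(n_1+n_{2a})$ and $-(1-p)\alpha n_{2a}(n_1+n_{2a})$ yields boundedness of solutions. Finally, uniqueness for the ODE together with the invariance of $\{n_{2a}=n_{2d}=0\}$ forces $n_{2a}(t)>0$ for all $t\geq 0$ whenever $n_{2a}(0)>0$, and a short continuity argument gives $n_{2d}(t)>0$ for all $t>0$.

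The heart of the proof is to show that $n_{2a}(t)$ eventually enters and stays in a small neighborhood of $\bar n_{2a}$. The key algebraic fact, already noted in Section~\ref{sec-invasionconditions} and used in the proof of Proposition~\ref{prop-stableequilibrium3D}, is that \eqref{invasionpossible} forces $\alpha\bar n_{2a}>\lambda_1-\mu$. Combined with Lemma~\ref{lemma-2dODE}, which says that on the invariant plane $\{n_1=0\}$ the reduced flow \eqref{2doncemore} attracts every positive initial condition to $(\bar n_{2a},\bar n_{2d})$, a continuous-dependence/compactness argument should propagate convergence to all sufficiently small perturbations off this plane. The subtle point—and where I expect the main difficulty—is to argue that the condition \eqref{proportioncond} guarantees the trajectory never escapes the basin of attraction of $(0,\bar n_{2a},\bar n_{2d})$: the initial strict growth of both $n_{2a}$ and $n_{2d}$ must be shown to prevent the $\omega$-limit set from touching $(0,0,0)$ or $(\bar n_1,0,0)$. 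Concretely, I would examine the time evolution of the ratio $\rho(t)=n_{2d}(t)/n_{2a}(t)$ and show that it stays wedged between the bounding curves $\tfrac{\mu-\lambda_2+\alpha(n_1+n_{2a})}{\sigma}$ and $\tfrac{p\alpha(n_1+n_{2a})}{\kappa\mu+\sigma}$, either by checking that the region cut out by \eqref{proportioncond} is forward invariant, or by a perturbative comparison to the two-dimensional system on $\{n_1=0\}$.

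Once $n_{2a}(t)\geq \bar n_{2a}-\eps$ for all sufficiently large $t$, with $\eps>0$ chosen so small that $\lambda_1-\mu-\alpha(\bar n_{2a}-\eps)<0$, one reads off from \eqref{3dimlotkavolterra} that $\dot n_1(t)/n_1(t)\leq \lambda_1-\mu-\alpha\bar n_{2a}+\alpha\eps<0$, so $n_1(t)\to 0$ exponentially. Then the $\omega$-limit set of the trajectory, being compact, connected and forward invariant, lies entirely in $\{n_1=0\}$, on which \eqref{3dimlotkavolterra} collapses to \eqref{2doncemore}. Since by the previous step $(n_{2a}(t),n_{2d}(t))$ stays bounded away from $(0,0)$, Lemma~\ref{lemma-2dODE} identifies this $\omega$-limit set as the singleton $\{(0,\bar n_{2a},\bar n_{2d})\}$, yielding \eqref{goodlimit}.
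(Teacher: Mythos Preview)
You correctly identify the crucial observation: the two inequalities in \eqref{proportioncond} are exactly $\dot n_{2d}>0$ and $\dot n_{2a}>0$, and you correctly suspect that the region they cut out is forward invariant. This is precisely the engine of the paper's proof. However, your assembly of the argument has a circularity that the paper avoids.

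Your plan is: (i) show $n_{2a}(t)$ eventually sits near $\bar n_{2a}$; (ii) deduce $n_1(t)\to 0$ from $\dot n_1/n_1\le \lambda_1-\mu-\alpha(\bar n_{2a}-\eps)<0$; (iii) invoke Lemma~\ref{lemma-2dODE} on the $\omega$-limit set. The gap is step (i). Your ``continuous-dependence/compactness'' argument off the plane $\{n_1=0\}$ only works when $n_1(0)$ is small, which the lemma does not assume. And forward invariance of \eqref{proportioncond} by itself yields monotonicity of $n_{2a}$, not that its limit is near $\bar n_{2a}$; to identify the limit you would effectively need the conclusion you are deferring to step (iii).

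The paper's route is shorter and avoids this loop. Once forward invariance is verified (by checking, on each piece of the boundary, the sign of $\tfrac{d}{dt}(n_{2d}/n_{2a})$), both $t\mapsto n_{2a}(t)$ and $t\mapsto n_{2d}(t)$ are nondecreasing and bounded, hence convergent. Passing to a subsequential limit $n_1^*$ of $n_1(t)$ and taking limits in \eqref{proportioncond} gives the non-strict version \eqref{endcond}; comparing this with the equilibrium identity \eqref{positiveequilibrium} forces $n_1^*=0$ by the elementary observation $\tfrac{p}{\kappa\mu+\sigma}<\tfrac{1}{\sigma}$. Thus the full limit is $(0,\bar n_{2a},\bar n_{2d})$ directly, and Lemma~\ref{lemma-2dODE} is not needed here at all. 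In short: keep your forward-invariance step, but replace the detour through $n_{2a}\approx\bar n_{2a}\Rightarrow n_1\to 0\Rightarrow$ 2D lemma by the monotone-convergence-plus-algebra argument.
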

Note that in the two-dimensional case $n_1(0)=0$, Lemma~\ref{lemma-3dODE} is weaker than Lemma~\ref{lemma-2dODE}. We will use the stronger assertion (more precisely, an approximative version of it) when handling the third phase of invasion in Section~\ref{sec-thirdphase}, where perturbations of the system \eqref{2dimlotkavolterra} need to be treated. 
\begin{proof}
Let us assume that for some $t \geq 0$,  $(n_1(t),n_{2a}(t),n_{2d}(t))=(n_1,n_{2a},n_{2d})$. Then the first inequality in \eqref{proportioncond} is equivalent to the statement that $\dot n_{2d}(t)>0$ and the second one is equivalent to the statement that $\dot n_{2a}(t)>0$. Hence, as long as \eqref{proportioncond} holds, $t \mapsto n_{2a}(t)$ and $t \mapsto n_{2d}(t)$ are strictly increasing. 

Let us assume that condition \eqref{proportioncond} holds for $(n_1,n_{2a},n_{2d})=(n_1(0),n_{2a}(0),n_{2d}(0))$. We claim that then it also holds for all $t > 0$ with $(n_1,n_{2a},n_{2d})=(n_1(t),n_{2a}(t),n_{2d}(t))$, unless eventually $n_{2a}(t)=\bar n_{2a}$ and $n_{2d}(t)=\bar n_{2d}$. Indeed, let us assume that for some $t>0$, $(n_1(t),n_{2a}(t),n_{2d}(t))$ lies on the boundary of the set
\[ \{ (n_1,n_{2a},n_{2d}) \in [0,\infty) \times (0,\infty) \times (0,\infty) \colon (n_1,n_{2a},n_{2d})\text{ satisfies \eqref{proportioncond}} \} \numberthis\label{propcondset} \]
with $n_{2a},n_{2d} \geq 0$, in such a way that $(n_1(s),n_{2a}(s),n_{2d}(s))$ is contained in the set \eqref{propcondset} for all $0 \leq s < t$. Then $n_{2a},n_{2d} >0$ holds because $n_{2a}(0),n_{2d}>0$ by assumption, moreover, $s \mapsto n_{2a}(s)$ and $s \mapsto n_{2d}(s)$ are increasing on $[0,t)$. Hence, one of the following conditions holds:
    \begin{enumerate}[(i)]
    \item\label{first-bdry} $\dot n_{2d}(t)=0$, $\dot n_{2a}(t)>0$,
    \item\label{second-bdry} $\dot n_{2a}(t)=0$, $\dot n_{2d}(t)>0$,
    \item\label{third-bdry} $\dot n_{2a}(t)=\dot n_{2d}(t)=0$.
    \end{enumerate}
In case \eqref{first-bdry} we have
\[ \dot{\big( \smfrac{n_{2d}}{n_{2a}} \big)}(t) =\frac{-\dot n_{2a}(t) n_{2d}(t)}{n_{2a}(t)^2} < 0. \]
The case \eqref{second-bdry} yields
\[ \dot{\big( \smfrac{n_{2d}}{n_{2a}} \big)}(t) =\frac{\dot n_{2d}(t) n_{2a}(t)}{n_{2a}(t)^2} > 0. \]
In case \eqref{third-bdry} we have (thanks to the condition that $n_{2a},n_{2d}>0$) that $(n_{2a},n_{2d})=(\bar n_{2a},\bar n_{2d})$. We conclude that if $(n_1,n_{2a},n_{2d})=(n_1(0),n_{2a}(0),n_{2d}(0))$ satisfies \eqref{proportioncond}, then $t \mapsto (n_1(t),n_{2a}(t),n_{2d}(t))$ never enters the complement of the closure of the set \eqref{propcondset} apart from $(\bar n_{2a},\bar n_{2d})$, which implies the claim.

Now, given that condition \eqref{proportioncond} holds for $(n_1,n_{2a},n_{2d})=(n_1(0),n_{2a}(0),n_{2d}(0))$, $t \mapsto n_{2a}(t)$ and $t \mapsto n_{2d}(t)$ are nonnegative, bounded, increasing, and strictly increasing unless $(n_{2a}(t),n_{2d}(t))=(\bar n_{2a},\bar n_{2d})$ eventually, in which case both coordinates would immediately become constant. Further, $t \mapsto n_1(t)$ is also bounded and nonnegative. Hence, $(n_1(t),n_{2a}(t),n_{2d}(t))$ converges along a subsequence to $(n_1^*, \bar n_{2a},\bar n_{2d})$ for some $n_1^* \geq 0$. Now we argue that $n_1^*$ must be equal to zero. Indeed, taking limits of \eqref{proportioncond} implies that
\[ \frac{p\alpha(n_1^*+\bar n_{2a})}{\kappa\mu+\sigma} \geq \frac{\bar x_{d}}{\bar x_{a}} \geq \frac{\mu-\lambda_2+\alpha(n_1^*+\bar n_{2a})}{\sigma}. \numberthis\label{endcond} \]
Observe that \eqref{endcond} holds for $n_1^*=0$ thanks to~\eqref{positiveequilibrium}. Taking this into account, any subsequential limit has to satisfy
\[ \frac{p\alpha n_1^*}{\kappa\mu+\sigma} \geq \frac{\alpha n_1^*}{\sigma}. \]
Since by our assumptions, $\smfrac{p}{\kappa\mu+\sigma} < \smfrac{1}{\sigma}$, we conclude that $\bar n_1^*=0$. Hence, \eqref{goodlimit} follows.
\end{proof}
The last auxiliary result corresponding to the second phase of invasion states that the state of the population process reached thanks to Proposition~\ref{prop-secondphase} belongs to the domain of attraction of the stable equilibrium $(0,\bar n_{2a},\bar n_{2d})$. 
\begin{lemma}\label{lemma-goodstart}
Let $C$ be chosen according to Proposition~\ref{prop-secondphase}, further, $n_1,n_{2a},n_{2d}>0$ such that $n_1 \in (\bar n_1-2\eps,\bar n_1+2\eps), n_{2a}+n_{2d} \in (\eps/C,\sqrt\eps)$, and $\smfrac{n_{2d}}{n_{2a}} = \smfrac{\pi_{2d}}{\pi_{2a}}$. Then, if $\eps>0$ is sufficiently small, then $(n_1,n_{2a},n_{2d})$ satisfies \eqref{proportioncond}.
\end{lemma}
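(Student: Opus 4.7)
The plan is to first verify the two strict inequalities of \eqref{proportioncond} in the limit $\eps \downarrow 0$, and then extend them to a neighbourhood by continuity. Under the hypotheses, as $\eps \downarrow 0$ we have $n_1 \to \bar n_1$ and $n_{2a}, n_{2d} \to 0$, so $n_1 + n_{2a} \to \bar n_1$, and $\frac{n_{2d}}{n_{2a}} = \frac{\pi_{2d}}{\pi_{2a}}$ is fixed. Recall from \eqref{Jdef} that $\alpha \bar n_1 = \lambda_1 - \mu$. Thus the two limiting inequalities read
\[ \frac{p(\lambda_1-\mu)}{\kappa\mu+\sigma} > \frac{\pi_{2d}}{\pi_{2a}} > \frac{\lambda_1-\lambda_2}{\sigma}. \]

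The key step is to evaluate $\frac{\pi_{2d}}{\pi_{2a}}$ via the defining equations of the left eigenvector of the matrix $J$ (cf.~\eqref{Jdef}) associated to the principal eigenvalue $\widetilde\lambda$ (cf.~\eqref{lambdatildedef}), which satisfies $\widetilde\lambda > 0$ under condition \eqref{invasionpossible}. The left eigenvector equation $(\pi_{2a},\pi_{2d}) J = \widetilde\lambda (\pi_{2a},\pi_{2d})$ gives the two identities
\[ \frac{\pi_{2d}}{\pi_{2a}} = \frac{\widetilde\lambda + \lambda_1 - \lambda_2}{\sigma} \qquad \text{and} \qquad \frac{\pi_{2d}}{\pi_{2a}} = \frac{p(\lambda_1-\mu)}{\widetilde\lambda + \kappa\mu + \sigma}. \]
Since $\widetilde\lambda > 0$, the first identity yields $\frac{\pi_{2d}}{\pi_{2a}} > \frac{\lambda_1-\lambda_2}{\sigma}$, which is exactly the right inequality of the limiting form of \eqref{proportioncond}. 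Similarly, the second identity together with $\widetilde\lambda > 0$ gives
\[ \frac{\pi_{2d}}{\pi_{2a}} = \frac{p(\lambda_1-\mu)}{\widetilde\lambda + \kappa\mu+\sigma} < \frac{p(\lambda_1-\mu)}{\kappa\mu+\sigma}, \]
which is the left inequality.

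Finally, since both inequalities are strict in the limit and all quantities appearing in \eqref{proportioncond} depend continuously on $(n_1, n_{2a}, n_{2d})$, there exists $\eps_1 > 0$ such that for every $0 < \eps < \eps_1$ and every triple $(n_1,n_{2a},n_{2d})$ satisfying the hypotheses of the lemma, the inequalities \eqref{proportioncond} hold. The only (minor) point to check is that the fixed ratio $\frac{n_{2d}}{n_{2a}} = \frac{\pi_{2d}}{\pi_{2a}}$, together with $n_{2a}+n_{2d} \in (\eps/C, \sqrt\eps)$, forces both $n_{2a}$ and $n_{2d}$ to be strictly positive and to tend to $0$ as $\eps \downarrow 0$, which is immediate. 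No nontrivial obstacle arises; the content of the lemma is essentially that the direction $(\pi_{2a},\pi_{2d})$ along which mutants enter the macroscopic regime lies in the interior of the attraction cone identified in Lemma~\ref{lemma-3dODE}.
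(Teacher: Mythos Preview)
Your proof is correct and follows essentially the same approach as the paper. Both arguments derive the two expressions $\frac{\pi_{2d}}{\pi_{2a}} = \frac{\widetilde\lambda + \lambda_1 - \lambda_2}{\sigma}$ and $\frac{\pi_{2d}}{\pi_{2a}} = \frac{p(\lambda_1-\mu)}{\widetilde\lambda + \kappa\mu + \sigma}$ from the left eigenvector equation and use $\widetilde\lambda > 0$ to obtain the strict inequalities; the only cosmetic difference is that the paper carries explicit $\eps$-dependent error terms (e.g.\ $3\sqrt\eps$, $2\eps$) through the inequality chains, whereas you argue by taking the limit $\eps \downarrow 0$ and invoking continuity.
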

\begin{proof}
Since $(\pi_{2a},\pi_{2d})$ is a left eigenvector of $J$ corresponding to the eigenvalue $\widetilde\lambda$, we have
\begin{align*}
    (\lambda_2-\lambda_1)+\sigma \frac{\pi_{2d}}{\pi_{2a}}=\widetilde\lambda=
    p(\lambda_1-\mu)\frac{\pi_{2a}}{\pi_{2d}}-(\kappa\mu+\sigma).
\end{align*}
Hence, since $\widetilde\lambda>0$, given that $\eps>0$ is small enough, we obtain
\[ \frac{\pi_{2d}}{\pi_{2a}} = \frac{\widetilde\lambda-\lambda_2+\lambda_1}{\sigma}=\frac{\widetilde\lambda-\lambda_2+\mu+\alpha \Big(\frac{\lambda_1-\mu}{\alpha}\Big)}{\sigma} > \frac{-\lambda_2+\mu+\alpha\Big(\frac{\lambda_1-\mu}{\alpha}+3\sqrt\eps\Big)}{\sigma} \geq \frac{\mu-\lambda_2+\alpha(n_1+n_{2a})}{\sigma}\]
and
\[ \frac{\pi_{2d}}{\pi_{2a}} =\frac{p\alpha\Big(\frac{\lambda_1-\mu}{\alpha}\Big)}{\widetilde\lambda+\kappa\mu+\sigma} < \frac{p\alpha\Big(\frac{\lambda_1-\mu}{\alpha}-2\eps\Big)}{\kappa\mu+\sigma}  \leq \frac{p\alpha(n_1+n_{2a})}{\kappa\mu+\sigma},\]
as asserted.
\end{proof}

\subsection{The third phase of invasion: extinction of the resident population}\label{sec-thirdphase}
After the second phase, the rescaled process $\mathbf N_t^K$ is close to the state $(0,\bar n_{2a},\bar n_{2d})$. In particular, $N_{1,t}^K$ is at most $\eps K$ for some $\eps>0$ small. In this subsection, we estimate the time of the extinction of the resident population. We also need to check that the mutant population stays close to $(\bar n_{2a},\bar n_{2d})$ during its time. We recall the set $S_\beta$ \eqref{Sbetadef} and the time $T_{S_\beta}$ \eqref{TSbetadef}. We have the following proposition.
\begin{prop}\label{prop-thirdphase}
There exist $\eps_0,C_0>0$ such that for all $\eps \in (0,\eps_0)$, under condition~\eqref{invasionpossible} with $\lambda_1>\lambda_2>\mu$, if there exists $\eta \in (0,1/2)$ that satisfies
\[ \big| N^K_{2a}(0)-\bar n_{2a} \big| \leq \eps~\text{ and }~\big| N^K_{2d}(0) - \bar n_{2d} | \leq \eps ~\text{ and }~\eta \eps/2 \leq N^K_1(0) \leq \eps/2, \]
then
\begin{align*}
    & \forall\widetilde C>(\mu+\alpha\bar n_{2a}-\lambda_1)^{-1}+C_0\eps, \quad & \P(T_{S_\eps} \leq \widetilde C \log K) \underset{K \to \infty}{\longrightarrow} 1, \\
    & \forall 0 \leq\widetilde C < (\mu+\alpha\bar n_{2a}-\lambda_1)^{-1}-C_0\eps, \quad & \P(T_{S_\eps} \leq \widetilde C \log K) \underset{K \to \infty}{\longrightarrow} 0.
\end{align*}
\end{prop}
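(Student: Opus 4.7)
The plan is to reduce the problem to classical extinction-time asymptotics for subcritical linear birth--death processes, by sandwiching $(KN^K_1(t))_{t \geq 0}$ between two such processes over a time window much longer than $\log K$. This mirrors the phase-III analysis of \cite{C+19}, with the dormancy affecting only the identification of the equilibrium seen by the residents. I would proceed in four steps.

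\textbf{Step 1 (Stability of the mutant coordinates).} Define the stopping time
\[
\theta^K := \inf \bigl\{ t \geq 0 : N^K_1(t) > \eps, \text{ or } |N^K_{2a}(t)-\bar n_{2a}| > 2\eps, \text{ or } |N^K_{2d}(t)-\bar n_{2d}| > 2\eps \bigr\}.
\]
I would show $\P(\theta^K > \e^{cK}) \to 1$ for some $c = c(\eps) > 0$. The initial condition lies in a small neighborhood of the equilibrium $(0,\bar n_{2a},\bar n_{2d})$, which is asymptotically stable by Proposition~\ref{prop-stableequilibrium3D}. Combining Lemma~\ref{lemma-EKconvergence} over a bounded initial interval (to push the process further into the basin, using a continuity argument from Proposition~\ref{prop-stableequilibrium3D} to sidestep the explicit hypothesis of Lemma~\ref{lemma-3dODE}) with a Freidlin--Wentzell large-deviation exit estimate \cite{FW84}, as already employed in the proof of Lemma~\ref{lemma-residentsstay}, yields the exponential-time lower bound on $\theta^K$.

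\textbf{Step 2 (Coupling with subcritical birth--death processes).} On $\{t \leq \theta^K\}$, the per-capita death rate of a resident satisfies
\[
\mu + \alpha(\bar n_{2a}-2\eps) \leq \mu + \alpha\bigl(N^K_1(t)+N^K_{2a}(t)\bigr) \leq \mu + \alpha(\bar n_{2a}+3\eps).
\]
Via a standard thinning coupling of the driving Poisson measures (in the spirit of \eqref{upsilondef}), I construct linear birth--death processes $Z^-$ and $Z^+$ on $\N_0$ with common per-capita birth rate $\lambda_1$ and per-capita death rates $d^- := \mu + \alpha(\bar n_{2a}+3\eps)$ and $d^+ := \mu + \alpha(\bar n_{2a}-2\eps)$, respectively, started from $Z^\pm(0) = KN^K_1(0)$, such that $Z^-(t) \leq KN^K_1(t) \leq Z^+(t)$ for all $t \leq \theta^K$. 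Condition \eqref{invasionpossible} guarantees $\mu + \alpha\bar n_{2a} > \lambda_1$, so both $Z^\pm$ are subcritical for all sufficiently small $\eps$.

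\textbf{Step 3 (Extinction times and assembly).} By classical extinction-time asymptotics for subcritical linear birth--death processes (see \cite[Chap.~V]{AN72}), if $T^\pm_{\mathrm{ext}}$ denotes the extinction time of $Z^\pm$ from initial size $n^\pm = KN^K_1(0) \in [K\eta\eps/2,K\eps/2]$, then $T^\pm_{\mathrm{ext}}/\log K \to 1/(d^\pm - \lambda_1)$ in probability. Taylor-expanding,
\[
\frac{1}{d^\pm - \lambda_1} = \frac{1}{\mu + \alpha\bar n_{2a} - \lambda_1} + O(\eps),
\]
which produces the advertised additive correction $C_0\eps$. Combined with Step 1, on the high-probability event $\{\theta^K > (\widetilde C+1)\log K\}$ the sandwich $Z^- \leq KN^K_1 \leq Z^+$ holds throughout, and hence $T^-_{\mathrm{ext}} \leq T_{S_\eps} \leq T^+_{\mathrm{ext}}$; we conclude upon noting that at the first time $N^K_1$ hits $0$ the mutant coordinates lie in $[\bar n_{2a}-2\eps,\bar n_{2a}+2\eps]\times[\bar n_{2d}-2\eps,\bar n_{2d}+2\eps]$ by definition of $\theta^K$, so the process has entered $S_{2\eps}$ (by adjusting the $2\eps$ buffer to $\eps$ at the outset, the target set $S_\eps$ is reached).

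\textbf{Main obstacle.} The delicate step is Step 1. The initial datum sits in a generic $\eps$-neighborhood of $(0,\bar n_{2a},\bar n_{2d})$, which need not satisfy the explicit domain-of-attraction condition \eqref{proportioncond} of Lemma~\ref{lemma-3dODE}. To bridge this gap I would combine continuity of the ODE flow with asymptotic stability from Proposition~\ref{prop-stableequilibrium3D}: for $\eps$ small, any initial condition in the $\eps$-box lies in the basin of $(0,\bar n_{2a},\bar n_{2d})$, so the fluid trajectory re-enters an arbitrarily small neighborhood within a bounded time. Lemma~\ref{lemma-EKconvergence} transfers this to the stochastic process on any bounded time interval, and the Freidlin--Wentzell estimate (whose hypotheses are satisfied thanks to the hyperbolicity established in the proof of Proposition~\ref{prop-stableequilibrium3D}) extends the control up to exponentially long times. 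Once this is in place, Steps 2--3 are fully analogous to the standard Coron et al.\ approach.
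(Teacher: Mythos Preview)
Your overall strategy matches the paper's: control the mutant coordinates near equilibrium over a long window, then sandwich the residents between subcritical linear birth--death processes and read off classical extinction asymptotics. The organizational difference is in Step~1. You bundle mutant stability and resident non-escape into a single stopping time $\theta^K$ and invoke Freidlin--Wentzell for the full three-dimensional process. The paper instead decouples: it first couples $(N^K_{2a},N^K_{2d})$, on the interval $[0,T^1_\eps]$ with $T^1_\eps=\inf\{t\geq 0:N^K_{1,t}\geq\eps\}$, with two autonomous \emph{two-dimensional} birth--death processes $(Y^{\eps,\leq}_{2a},Y^{\eps,\leq}_{2d})$ and $(Y^{\eps,\geq}_{2a},Y^{\eps,\geq}_{2d})$ in which the resident contribution to the rates is replaced by the uniform bound $\eps$ (respectively $0$). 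Freidlin--Wentzell applies cleanly to these two-dimensional processes because their equilibria lie in the interior of $(0,\infty)^2$. The resident non-escape $\{T^1_0<T^1_\eps\}$ is then obtained separately, from the subcritical coupling of your Step~2 together with the elementary branching-process fact that a subcritical process started from $\lfloor\eps K/2\rfloor$ reaches $\lfloor\eps K\rfloor$ only with probability $o(1)$.

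This decoupling is not merely cosmetic. Your three-dimensional Freidlin--Wentzell invocation is technically problematic because the equilibrium $(0,\bar n_{2a},\bar n_{2d})$ sits on the invariant boundary face $\{n_1=0\}$ of both the state space and your box; exit through the face $\{n_1=\eps\}$ is governed not by an interior large-deviation rate of the type treated in \cite[Chapter~5]{FW84} (and in the proof of Lemma~\ref{lemma-residentsstay}, where the equilibrium $\bar n_1$ is interior) but by the subcriticality of the resident birth--death dynamics. The paper's two-dimensional reduction sidesteps this entirely. Relatedly, your ``main obstacle'' (basin of attraction) is also resolved differently: rather than appealing to three-dimensional local stability from Proposition~\ref{prop-stableequilibrium3D}, the paper runs the global two-dimensional attractor argument of Lemma~\ref{lemma-2dODE} for the perturbed systems limiting $(Y^{\eps,\diamond}_{2a},Y^{\eps,\diamond}_{2d})$, which needs no restriction on the initial mutant proportion.
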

\begin{proof}
Our first step is to show that the rescaled population size vector $(N_{2a,t}^K,N_{2d,t}^K)$ stays close to its equilibrium $(\bar n_{2a},\bar n_{2d})$ for long times, given that the resident population is small. To this aim, we employ arguments  similar to the ones of \cite[Proof of Proposition 4.1, Step 1]{C+16}. For $\eps>0$ we define the stopping times
\[ R_{\eps,i} = \inf \big\{ t \geq 0 \colon \big| N_{2i,t}^K-\bar x_i \big| > \eps \big\}, \qquad i \in \{ a, d \}, \]
\[ T_0^1 = \inf \{ t \geq 0 \colon N_{1,t}^K = 0 \}, \]
and
\[ T_{\eps}^1= \inf \{ t \geq 0 \colon N_{1,t}^K \geq \eps \}. \]
These stopping times depend on $K$, but we omit the $K$-dependence from the notation for readability. 

We couple $(N_{2a,t}^K,N_{2d,t}^K)$ with two bi-type birth-and-death processes, $(Y_{2a,t}^{\eps,\leq},Y_{2d,t}^{\eps,\leq})$ and $(Y_{2a,t}^{\eps,\geq},Y_{2d,t}^{\eps,\geq})$, such that
\[ Y^{\eps,\leq}_{2\upsilon,t} \leq N_{2\upsilon,t}^K \leq Y^{\eps,\geq}_{2\upsilon,t}, \qquad a.s. \qquad \forall \upsilon \in \{ a, d \},~\forall 0 \leq t \leq T_{\eps}^1. \numberthis\label{lastmutantcoupling} \] 
In order to satisfy \eqref{lastmutantcoupling}, these processes can be defined with the following rates:
\[ \begin{aligned}
(Y_{2a,t}^{\eps,\leq},Y_{2d,t}^{\eps,\leq}) \colon \quad &  \Big(\frac{i}{K},\frac{j}{K}\Big) \to \Big(\frac{i+1}{K},\frac{j}{K}\Big)& \text{at rate} & \quad i \lambda_2, \\
& \Big(\frac{i}{K},\frac{j}{K}\Big) \to \Big(\frac{i-1}{K},\frac{j}{K}\Big) & \text{at rate}  &\quad i(\mu+\alpha((1-p)\smfrac{i}{K}+\eps)), \\
&  \Big(\frac{i}{K},\frac{j}{K}\Big) \to \Big(\frac{i-1}{K},\frac{j+1}{K}\Big) & \text{at rate}& \quad \smfrac{p\alpha i^2}{K}, \\
&  \Big(\frac{i}{K},\frac{j}{K}\Big) \to \Big(\frac{i+1}{K},\frac{j-1}{K}\Big)  & \text{at rate} &\quad j\sigma, \\
&  \Big(\frac{i}{K},\frac{j}{K}\Big) \to \Big(\frac{i}{K},\frac{j-1}{K}\Big) & \text{at rate} &\quad j\kappa\mu. 
\end{aligned}
\]
and
\[ \begin{aligned}
(Y_{2a,t}^{\eps,\geq},Y_{2d,t}^{\eps,\geq}) \colon \quad &  \Big(\frac{i}{K},\frac{j}{K}\Big) \to \Big(\frac{i+1}{K},\frac{j}{K}\Big)& \text{at rate} & \quad i \lambda_2, \\
& \Big(\frac{i}{K},\frac{j}{K}\Big) \to \Big(\frac{i-1}{K},\frac{j}{K}\Big) & \text{at rate}  &\quad i(\mu+\alpha(1-p)\smfrac{i}{K}-p\alpha \eps), \\
&  \Big(\frac{i}{K},\frac{j}{K}\Big) \to \Big(\frac{i-1}{K},\frac{j+1}{K}\Big) & \text{at rate}& \quad \smfrac{p\alpha(i^2+\eps)}{K}, \\
&  \Big(\frac{i}{K},\frac{j}{K}\Big) \to \Big(\frac{i+1}{K},\frac{j-1}{K}\Big)  & \text{at rate} &\quad j\sigma, \\
&  \Big(\frac{i}{K},\frac{j}{K}\Big) \to \Big(\frac{i}{K},\frac{j-1}{K}\Big) & \text{at rate} &\quad j\kappa\mu. 
\end{aligned}
\]
The idea of this coupling is similar to the one in the proof of Proposition~\ref{prop-firstphase}: in order to decrease (increase) the process, one needs higher (lower) total competition event rate and rate of death by competition for the actives and lower (higher) active$\to$dormant switching rate. 

We will show that the processes $(Y_{2a,t}^{\eps,\leq},Y_{2d,t}^{\eps,\leq})$ and $(Y_{2a,t}^{\eps,\geq},Y_{2d,t}^{\eps,\geq})$ will stay close to $(\bar n_{2a},\bar n_{2d})$ for at least an exponential (in $K$) time with a probability close to 1 for large $K$. To do so, we will study the stopping times
\[ R_{\eta,\upsilon}^{\diamond} = \inf \big\{ t \geq 0 \colon N_{2\upsilon,t}^{\eps,\diamond} \notin [x_\upsilon-\eta,x_\upsilon+\eta] \big\} \]
for $\eta >0$, $\upsilon \in \{ a, d \}$ and $\diamond \in \{ \leq, \geq \}$. Let us first study the process $(Y_{2a,t}^{\eps,\leq},Y_{2d,t}^{\eps,\leq})$. According to \cite[Theorem 2.1, p.~456]{EK}, the dynamics of this process is close to the dynamics of the unique solution to
\[
\begin{aligned}
\dot n_{2a}&=n_{2a}(\lambda_2-\mu-\alpha \eps-\alpha n_{2a})+\sigma n_{2d}, \\
\dot n_{2d}&= p\alpha n_{2a}^2-(\kappa\mu+\sigma)n_{2d}.
\end{aligned}
\]
Similar to point \eqref{secondpoint} in Section~\ref{sec-invasionconditions}, we have that for all sufficiently small $\eps>0$, this system has a unique positive equilibrium, which we denote by $(\bar n_{2a}^{\eps,\leq},\bar n_{2d}^{\eps,\leq})$. Here, analogously to $\bar n_{2a}$, we have $\bar n_{2a}^{\eps,\leq}=\smfrac{(\lambda_2-\mu-\alpha\eps)(\kappa\mu+\sigma)}{\alpha(\kappa\mu+(1-p)\sigma)}$, whereas $\bar n_{2d}^{\eps,\leq}$ depends on $\eps$ in a more involved way, but it tends to $\bar n_{2d}$ as $\eps \downarrow 0$.
For $\eps>0$ small enough, the equilibrium $(0,0)$ is unstable and $(\bar n_{2a}^{\eps,\leq},\bar n_{2d}^{\eps,\leq})$ is asymptotically stable, further, we can verify convergence of the solution to $(\bar n_{2a}^{\eps,\leq},\bar n_{2d}^{\eps,\leq})$ for any coordinatewise nonnegative initial condition but $(0,0)$, as $t \to \infty$, using similar arguments as in the proof of Lemma~\ref{lemma-2dODE}. Thus, we can find constants $c_0$ and $\eps'_0$ such that for any $\eps \in (0,\eps_0)$,
\[ \forall i \in \{ a,d \} \colon \big| \bar x_i^{\eps,\leq}-\bar x_i \big| \leq (c_0-1)\eps \text{ and } 0 \notin [\bar x_i-c_0 \eps, \bar x_i + c_0\eps] . \] 

Now, similarly to the proof of Lemma~\ref{lemma-residentsstay}, we can use results by Freidlin--Wentzell about exit of jump processes from a domain \cite[Section 5]{FW84} in order to construct a family (over $K$) of Markov processes $(\widetilde Y_{2a,t},\widetilde Y_{2d,t})_{t \geq 0}$ whose transition rates are positive, bounded, Lipschitz continuous and uniformly bounded away from 0 such that for
\[ \widetilde R^\leq_{\eps,i} = \inf \big\{ t \geq 0 \colon \big| \widetilde Y_{2i,t}^K-\bar x_i \big| > \eps \big\}, \qquad i \in \{ a, d \}, \]
there exists $V>0$ such that for all $i \in \{ a, d \}$ we have
\[ \P\big( R^{\leq}_{c_0\eps,a} > \e^{KV},R^{\leq}_{c_0\eps,d} > \e^{KV}  \big) = \P\big( \widetilde R^{\leq}_{c_0\eps,a} > \e^{KV},\widetilde R^{\leq}_{c_0\eps,d} >  \e^{KV} \big) \underset{K \to \infty}{\longrightarrow} 1. \numberthis\label{firstFWextinction} \]
Similarly, we obtain
\[ \P\big( R^{\geq}_{c_0\eps,a} > \e^{KV},R^{\geq}_{c_0\eps,d} > \e^{KV}  \big) \underset{K \to \infty}{\longrightarrow} 1, \numberthis\label{secondFWextinction} \]
where without loss of generality we can assume that the constant $V$ in \eqref{secondFWextinction} is the same as the one in \eqref{firstFWextinction}. 
Now note that $R_{c_0\eps,i} \geq R_{c_0\eps,i}^\leq \wedge R_{c_0\eps,i}^\geq$ on the event $\{ R_{c_0\eps,i} \leq T_\eps^1 \}$. This together with \eqref{firstFWextinction} and \eqref{secondFWextinction} implies that 
\[ \lim_{K\to\infty} \P\big(R_{c_0\eps,i} \leq \e^{KV} \wedge T_\eps^1\big) =0\] holds for all $i \in \{ a, d \}$,
hence
\[ \lim_{K \to \infty} \P\big( R_{c_0\eps,a} \wedge R_{c_0\eps,d} \leq \e^{KV} \wedge T^1_\eps)=0. \numberthis\label{mutantsstayinequilibrium} \]
Now, we can find two branching processes $Z_1^{\eps,\leq}=(Z_{1,t}^{\eps,\leq})_{t \geq 0}$ and $Z_1^{\eps,\geq}=(Z_{1,t}^{\eps,\geq})_{t \geq 0}$ such that 
\[ Z_{1,t}^{\eps,\leq} \leq K N_{1,t}^K \leq Z_{1,t}^{\eps,\geq} \numberthis\label{subcriticalcoupling} \]
almost surely on the time interval
\[ I_\eps^K = \big[ 0, R_{c_0\eps,a} \wedge R_{c_0\eps,d} \wedge  T^1_\eps \big]. \]
Indeed, in order to satisfy \eqref{subcriticalcoupling}, the processes $Z_{1}^{\eps,\geq}$ and $Z_{1}^{\eps,\leq}$ can be chosen with the following rates and initial conditions:
\[ \begin{aligned}
Z_{1}^{\eps,\leq} \colon & i \to i+1 & \text{ at rate } & i\lambda_1, 
& i \to i-1 & \text{ at rate } i \Big(\mu + \alpha \big(\bar n_{2a} + (c_0+1)\eps\big)\Big),
\end{aligned}
\]
started from $\lfloor \smfrac{ \eta \eps K}{2} \rfloor$, and
\[ \begin{aligned}
Z_{1}^{\eps,\geq} \colon & i \to i+1 & \text{ at rate } & i\lambda_1,
& i \to i-1 & \text{ at rate } i (\mu + \alpha (\bar n_{2a} - c_0\eps)),
\end{aligned}
\]
started from  $\lfloor \smfrac{\eps K}{2} \rfloor +1$.

For all $\eps>0$ sufficiently small, both of these branching processes are subcritical according to point \eqref{lastpoint} in Section~\ref{sec-invasionconditions}. The growth rates of these three processes are $\lambda_1-\mu-\alpha \bar n_{2a} \pm O(\eps)$. From this, analogously to \cite[Section 3.3]{C+19}, we deduce that the extinction time of these processes started from $[\lfloor\smfrac{ \eta K \eps}{2} \rfloor, \lfloor \smfrac{\eps K}{2} \rfloor +1]$ is of order $(\mu-\lambda_1+\alpha \bar n_{2a}\pm O(\eps))\log K$. This in turn follows from the general assertion that for a branching process $\mathcal N=(\mathcal N(t))_{t \geq 0}$ with birth rate $\Bcal>0$ and death rate $\Dcal>0$ that is subcritical (i.e., $\Bcal<\Dcal$), given that $\mathcal N(0) \in [\lfloor\smfrac{ \eta K \eps}{2} \rfloor, \lfloor \smfrac{\eps K}{2} \rfloor +1]$, defining
\[ \mathcal S_\eps^{\mathcal N} = \inf \{ t \geq 0 \colon \mathcal N(t) \geq \lfloor \eps K \rfloor \} , \qquad \eps > 0,  \]
and
\[ \mathcal S_\eps^{\mathcal N} = \inf \{ t \geq 0 \colon \mathcal N(t) =0 \} , \]
the following hold according to \cite[p.~202]{AN72}:
\[\forall \widetilde C<(\Dcal-\Bcal)^{-1}, \qquad \lim_{K \to \infty} \P(\mathcal S_0^{\mathcal N} \leq \widetilde C \log K)=0\]
and
\[ \forall\widetilde C>(\Dcal-\Bcal)^{-1}, \qquad \lim_{K \to \infty} \P(\mathcal S_0^{\mathcal N} \leq \widetilde C \log K)=1. \]
Further, if $\mathcal N(0)=\lfloor \smfrac{\eta K \eps}{2} \rfloor$, then for all sufficiently small $\eps>0$,
\[ \lim_{K \to \infty} \P\Big(\mathcal S_0^{\Ncal} > K \wedge \mathcal S_{\lfloor \eps K \rfloor }^{\mathcal N} \Big) = 0. \numberthis\label{diebeforeyougrowup} \]
Now for $C \geq 0$ we can estimate as follows
\[ 
\begin{aligned}
\P(T_0^1 < \widetilde C \log K) - \P\big(\mathcal S_0^{Z_1^{\eps,\leq}}  < \widetilde C \log K \big)
\leq & \P\big( T_0^1 > T^1_\eps \wedge K \big) + \P\big( T^1_\eps \wedge K > R_{c_0\eps,a} \wedge R_{c_0\eps,d} \big) \\
\leq & \P\big(\mathcal S_0^{Z_1^{\eps,\geq}} > \mathcal S_{\lfloor \eps K \rfloor}^{Z_1^{\eps,\geq}} \wedge K \big) +\P\big( T^1_\eps \wedge K > R_{c_0\eps,a} \wedge R_{c_0\eps,d} \big).
\end{aligned} \numberthis\label{strange} \]
Here, the first inequality can be verified as follows:
\[
\begin{aligned}
\P(T_0^1< \widetilde C \log K)&- \P \big( \mathcal S_0^{Z_1^{\eps,\leq}} < \widetilde C \log K \big)  =
 \P(T_0^1 < \widetilde C \log K \leq \mathcal S_0^{Z_1^{\eps,\leq}})
\\ &\leq    
\P(R_{c_0\eps,a}\wedge R_{c_0\eps,d} < T_0^1 < \widetilde C \log K, R_{c_0\eps,a}\wedge R_{c_0\eps,d}  < T_\eps^1) \\ & \qquad + \P(
 T_\eps^1 <T_0^1 < \widetilde C \log K, R_{c_0\eps,a}\wedge R_{c_0\eps,d}  > T_\eps^1)
\\ & \leq
\P(R_{c_0\eps,a}\wedge R_{c_0\eps,d}  < T_\eps^1 \wedge \widetilde C \log K) + \P(T_0^1 > T_\eps^1)
\\ & \leq
 \P(R_{c_0\eps,a}\wedge R_{c_0\eps,d}  < T_\eps^1 \wedge K) + \P(T_0^1 > T^1_\eps \wedge K).
\end{aligned}
\]
Given that $\eps>0$ is small enough, the second term in the last line of~\eqref{strange} tends to zero as $K \to \infty$ according to \eqref{mutantsstayinequilibrium} and so does the first one according to \eqref{diebeforeyougrowup}. We conclude that
\[ \limsup_{K \to \infty} \P\big( T_0^1 < \widetilde C \log K \big) \leq \lim_{K \to \infty} \P\big(\mathcal S_0^{Z_1^{\eps,\leq}} \leq \widetilde C \log K \big), \]
and similarly, we deduce
\[ \liminf_{K \to \infty} \P\big( T_0^1 < \widetilde C \log K \big) \geq \lim_{K \to \infty} \P\big(\mathcal S_0^{Z_1^{\eps,\geq}} \leq \widetilde C \log K \big), \]
\color{black} which implies the proposition.
\end{proof}

\subsection{Proof of Theorems~\ref{thm-invasion},\ref{thm-success}, and \ref{thm-failure}}\label{sec-proofremainder}
Putting together Propositions~\ref{prop-firstphase}, \ref{prop-secondphase}, and \ref{prop-thirdphase}, we now verify our main results. The structure of this part of our proof is similar to the one of~\cite[Section 3.4]{C+19}, the main difference lies in the behaviour of the corresponding dynamical systems. Our proof strongly relies on the coupling \eqref{upsilondef}. More precisely, we define a Bernoulli random variable $B$ as the indicator of nonextinction
\[ B:= \mathds 1 \{ \forall t>0 \colon \widehat Z_{2a}(t)+\widehat Z_{2d}(t)>0 \} \]
of the process $(\widehat Z_{2a}(t),\widehat Z_{2d}(t))_{t \geq 0}$ defined in point \ref{thirdpoint} of Section~\ref{sec-invasionconditions}, which is initially coupled with $(K N_{2a,t}^K,K N_{2d,t}^K)_{t \geq 0}$ according to \eqref{upsilondef}. 
Let $f$ be the function defined in Proposition~\ref{prop-firstphase}. Throughout the rest of the proof, we can assume that $\eps>0$ is so small that $f(\eps) <1$.

Our goal is to show that
\[ \liminf_{K \to \infty} \mathcal E(K,\eps) \geq q-o(\eps) \numberthis\label{extinctionlower}\]
holds for 
\[ \mathcal E(K,\eps):= \P \Big( \frac{T_0^2}{\log K} \leq f(\eps), T_0^2 < T_{S_\beta}, B=0 \Big)\]
and
\[ \liminf_{K \to \infty} \mathcal I(K,\eps) \geq 1-q-o(\eps) \numberthis\label{survivallower}\]
holds for
\[ \mathcal I(K,\eps):=\P \Big( \Big| \frac{T_{S_\beta} \wedge T_0^2}{\log K}-\Big( \frac{1}{\widetilde \lambda} + \frac{1}{\mu-\lambda_1+\alpha \bar n_{2a}} \Big)\Big| \leq f(\eps), T_{S_\beta} < T_0^2, B=1 \Big). \]
These together will imply Theorem~\ref{thm-invasion}, Theorem~\ref{thm-success}, and the equation \eqref{extinction} in Theorem~\ref{thm-failure}. The other assertion of Theorem~\ref{thm-failure}, equation \eqref{lastoftheorem}, follows already from \eqref{secondofprop}.

Let us start with the case of mutant extinction in the first phase of invasion and verify \eqref{extinctionlower}. Clearly, we have 
\[ \mathcal E(K,\eps)\geq \P \Big( \frac{T_0^2}{\log K} \leq f(\eps), T_0^2 < T_{S_\beta}, B=0, T_0^2 < T_{\eps}^2 \wedge R_{2\eps} \Big). \]
Now, considering our initial conditions, for all sufficiently small $\eps>0$ we have
$T_{\eps}^2 \wedge R_{2\eps} < T_{S_\beta}$, almost surely.
Hence,
\[ \mathcal E(K,\eps)\geq \P \Big( \frac{T_0^2}{\log K} \leq f(\eps), B=0, T_0^2 < T_{\eps}^2 \wedge R_{2\eps} \Big). \numberthis\label{andisand} \]
Moreover, analogously to the proof of Proposition~\ref{prop-firstphase} with $\xi=1$, we obtain
\[ \limsup_{K \to \infty} \P \Big( \{ B=0 \} \Delta \{ T_0^2 < T_{\eps}^2 \wedge R_{2\eps}  \}  \Big)+\P \Big( \{ B=0 \} \Delta \{ T_0^{(\eps,+),2} < \infty \}  \Big)=o_\eps(1), \numberthis\label{undefinedsymmdiff} \]
where $\Delta$ stands for symmetric difference. 
Together with \eqref{andisand}, these imply
\begin{align*}
\liminf_{K \to \infty} \mathcal E(K,\eps)  &\geq \P \Big( \frac{
\widetilde \lambda T_0^2}{\log K} \leq f(\eps), T_0^{(\eps,+),2} < \infty \Big) + o_\eps(1) \\ &\geq   \P \Big( \frac{\widetilde\lambda
T_0^{(\eps,+),2}}{\log K} \leq f(\eps), T_0^{(\eps,+),2} < \infty \Big) + o_\eps(1) \numberthis\label{secondline} \geq \P \Big( T_0^{(\eps,+),2} < \infty \Big) + o_\eps(1), 
\end{align*}
where in the first equality of \eqref{secondline} we used the coupling \eqref{upsilondef}. Thus, using~\eqref{qdef} and \eqref{qineq}, we conclude \eqref{extinction}. 

Let us continue with the case of mutant survival in the first phase of invasion and verify \eqref{survivallower}. 
Arguing analogously to \eqref{undefinedsymmdiff} but for $\xi=1/2$, we obtain
\[ \limsup_{K \to \infty} \P \big( \{ B=1 \} \Delta \{ T_{\sqrt \eps}^2  < T_0^2 \wedge R_{2\eps}  \}  \big)=o_\eps(1). \] 
Thus,
\begin{equation}\label{beforesets}
\liminf_{K \to \infty} \mathcal I(K,\eps) = \liminf_{K \to \infty} \P \Big( \Big| \frac{T_{S_\beta}}{\log K} -\Big( \frac{1}{\widetilde \lambda} + \frac{1}{\mu-\lambda_1+\alpha \bar n_{2a}} \Big) \Big| \leq f(\eps), T_{S_\beta}<T_0^2, T^2_{\sqrt \eps} < T_0^2 \wedge R_{2\eps} \Big) + o_\eps(1). 
\end{equation} 
For $\eps>0,\beta>0$, we introduce the sets
\[ \begin{aligned}
\Bcal^1_\eps &:= [\pi_{2a}-\delta,\pi_{2a}+\delta] \times [\eps/C,\sqrt \eps] \times [\bar n_1-2\eps,\bar n_1+2\eps], \\
\Bcal^2_\beta &:=[0,\beta/2] \times [\bar n_{2a}-(\beta/2),\bar n_{2a}+(\beta/2)] \times [\bar n_{2d}-(\beta/2),\bar n_{2d}+(\beta/2)]
\end{aligned}
\]
and the stopping times
\[
\begin{aligned}
T'_\eps:=& \inf \Big\{ t \geq 0 \colon \Big( \frac{N_{2a,t}^K}{N_{2a,t}^K+N_{2d,t}^K}, N_{2,t}^K, N_{1,t}^K \Big)\in \Bcal^1_\eps \Big\}, \\
T''_\beta:=&\inf \Big\{ t \geq T'_\eps \colon \mathbf N^K_t \in \Bcal^2_\beta \Big\}.
\end{aligned}
\]
Informally speaking, our goal is to show that with high probability the process has to pass through $\Bcal^1_\eps$ and $\Bcal^2_\beta$ in order to reach $S_\beta$. Then, thanks to the Markov property, we can estimate $T_{S_\beta}$ by estimating $T'_\eps$, $T''_\beta-T'_\eps$ and $T_{S_\beta}-T''_\beta$. \eqref{beforesets} implies that 
\begin{align*}
    \liminf_{K \to \infty}&\mathcal I(K,\eps) 
    \geq  \P \Big( \Big| \frac{T'_\eps}{\log K} -\frac{1}{\widetilde \lambda} \Big| \leq \frac{f(\eps)}{3}, \Big| \frac{T''_\beta-T'_\eps}{\log K} \Big| \leq \frac{f(\eps)}{3}, \Big| \frac{T_{S_{\beta}}-T''_{\beta}}{\log K}- \frac{1}{\mu-\lambda_1+\alpha \bar n_{2a}}\Big| \leq \frac{f(\eps)}{3},  \\
    & \qquad   T^2_{\sqrt \eps} < T_0^2 \wedge R_{2\eps} , T''_{\beta}<T_{S_\beta}, T_{S_\beta}<T_0^2 \Big) + o_\eps(1) ,
\end{align*}
see \cite[display before (3.60)]{C+19} for further details in a similar setting. Note that for $\eps>0$ sufficiently small, $R_{2\eps} \leq T_{S_\beta}$ almost surely, further, if $T'_\eps<\infty$, then $T'_\eps < T''_\beta$. Hence, the strong Markov property applied at times $T'_\eps$ and $T''_\beta$ implies 
\[ \begin{aligned}
    \liminf_{K \to \infty} \mathcal I(K,\eps)&\geq \liminf_{K \to \infty} \Big[ \P \Big( \Big| \frac{T'_\eps}{\log K} -\frac{1}{\widetilde \lambda} \Big| \leq \frac{f(\eps)}{3}, T'_\eps<T_0^2, T^2_{\sqrt\eps} < T_0^2 \wedge R_{2\eps} \Big) \\
    & \qquad \times \inf_{\begin{smallmatrix}\mathbf n=(n_1,n_{2a},n_{2d}) \colon \big(\frac{n_{2a}}{n_{2a}+n_{2d}},n_{2a}+n_{2d},n_1\big) \in \Bcal^1_\eps\end{smallmatrix}} \P \Big(  \Big| \frac{T''_\beta-T'_\eps}{\log K} \Big| \leq \frac{f(\eps)}{3}, T''_{\beta} < T_0^2 \Big| \mathbf N^K_0=\mathbf n \Big) \\
    & \qquad \times \inf_{\mathbf n \in \Bcal^2_\beta} \P \Big(\Big| \frac{T_{S_{\beta}}-T''_{\beta}}{\log K}- \frac{1}{\mu-\lambda_1+\alpha \bar n_{2a}}\Big| \leq \frac{f(\eps)}{3}, T_{S_\beta} < T_0^2  \Big| \mathbf N_0^K = \mathbf n\Big) \Big]+o_{\eps}(1). \end{aligned} \numberthis\label{productform} \]
It remains to show that the right-hand side is close to $1-q$ as $K \to \infty$ and $\eps$ is small. We first consider the first term and verify that 
\[ \liminf_{K \to \infty} \P \Big( \Big| \frac{T'_\eps}{\log K} -\frac{1}{\widetilde \lambda} \Big| \leq \frac{f(\eps)}{3}, T'_\eps<T_0^2, T^2_{\sqrt\eps} < T_0^2 \wedge R_{2\eps} \Big) \geq 1-q+o_{\eps}(1). \numberthis\label{firstterm} \]
This can be done analogously to \cite[Proof of (3.61)]{C+19}.

Next, we handle the second term on the right-hand side of \eqref{productform}.
For $\mathbf m=(m_1,m_{2a},m_{2d}) \in [0,\infty)^3$, let $\mathbf n^{(\mathbf m)}$ denote the unique solution of the dynamical system \eqref{3dimlotkavolterra} with initial condition $\mathbf m$. Thanks to the continuity of flows of this dynamical system with respect to the initial condition and thanks to the convergence provided by Lemma~\ref{lemma-3dODE}, we deduce that there exist $\eps_0$ and $\delta_0>0$ such that for all $\eps \in (0,\eps_0)$ and $\delta \in (0,\delta_0)$, there exists $t_{\beta,\delta,\eps}>0$ such that for all $t>t_{\beta,\delta,\eps}$,
\[ \big\vert \mathbf n^{(\mathbf n^0)}(t)-(0,\bar n_{2a},\bar n_{2d}) \big\vert \leq \frac{\beta}{4} \]
holds for any initial condition $n^0=(n_1^0,n_{2a}^0,n_{2d}^0)$ such that $(n_{2a}^0/(n_{2a}^0+n_{2d}^0),n_{2a}^0+n_{2d}^0,n_1^0) \in \mathcal B^1_\eps$. Indeed, because of Lemma~\ref{lemma-goodstart}, $n^0$ satisfies \eqref{proportioncond} in case $n_{2a}^0/(n_{2a}^0+n_{2d}^0)$ is equal to $\pi_{2a}$, and for all sufficiently small $\eps>0$, the same follows by continuity for all $n^0=(n_1^0,n_{2a}^0,n_{2d}^0)$ such that  $(n_{2a}^0/(n_{2a}^0+n_{2d}^0),n_{2a}^0+n_{2d}^0,n_1^0) \in \mathcal B^1_\eps$.

Now, using Lemma~\ref{lemma-EKconvergence}, we conclude that for all $\eps<\eps_0$,
\[ \lim_{K \to \infty} \P\Big( T''_\beta-T'_\eps \leq t_{\beta,\delta,\eps} \Big|\Big( \frac{N_{2a,0}^K}{N_{2a,0}^K+N_{2d,0}^K}, N_{2,0}^K, N_{1,0}^K \Big) \in \mathcal B^1_\eps \Big) =1-o_\eps(1). \]
Thus, the second term on the right-hand side of \eqref{productform} is close to 1 when $K$ tends to $\infty$ and $\eps>0$ is small.

Lastly, we investigate the third term on the right-hand side of \eqref{productform}. By Proposition~\ref{prop-thirdphase}, there exists $\beta_0>0$ (denoted as $\eps_0$ in Proposition~\ref{prop-thirdphase}) such that for all $\beta<\beta_0$, for $\eps>0$ sufficiently small,
\[ \lim_{K \to \infty} \P \Big(\Big| \frac{T_{S_{\beta}}-T''_{\beta}}{\log K}- \frac{1}{\mu-\lambda_1+\alpha \bar n_{2a}}\Big| \leq \frac{f(\eps)}{3} \Big| \mathbf N_0^K \in \mathcal B^2_\beta \Big) = 1-o_\eps(1). \]
Further $\beta_0$ can be chosen as large as $\min \{ \bar n_{2a}, \bar n_{2d} \}$. 
Combining \eqref{firstterm} with the convergence of the second and the third term on the right-hand side of \eqref{productform} to 1, we obtain \eqref{survivallower},
which implies \eqref{invasion}. 

\section*{Acknowledgements}
The authors thank A.~Kraut, N.~Kurt and J.~T.~Lennon for interesting discussions and comments. JB was supported by DFG Priority Programme 1590 ``Probabilistic Structures in Evolution’’, project 1105/5-1. AT was supported by DFG Priority Programme 1590 ``Probabilistic Structures in Evolution’’.

\end{document}